\documentclass[a4paper,12pt,oneside]{amsart}
               
\usepackage{a4wide}
\usepackage[utf8]{inputenc} 
\usepackage[T1]{fontenc} 
\usepackage{textcomp} 
\usepackage[english,french]{babel} 
\usepackage{amsmath}
\usepackage{amssymb}
\usepackage{amsthm}
\usepackage{graphicx}
\usepackage{url}
\usepackage{hyperref}
\usepackage[all]{xy}
\usepackage{nccmath}
\usepackage{tikz-cd}
\usepackage{color}
\usepackage{mathrsfs,xspace}
\usepackage{comment}
\usepackage{extarrows}
\usepackage{enumitem}
\usepackage[foot]{amsaddr}
\usepackage{faktor}
\usepackage[font=small,labelfont=bf]{caption}
\allowdisplaybreaks

\DeclareMathOperator{\supp}{supp}

\DeclareMathOperator{\codim}{codim}

\DeclareMathOperator{\Sub}{sub}

\DeclareMathOperator{\Op}{Op}

\DeclareMathOperator{\dom}{dom}
\DeclareMathOperator{\Mod}{Mod}
\DeclareMathOperator{\Db}{\mathcal{D}\textit{b}}
\DeclareMathOperator{\DR}{\mathcal{DR}}

\DeclareMathOperator{\Sol}{\mathcal{S}\textit{ol}}
\DeclareMathOperator{\N}{\mathbb{N}}
\DeclareMathOperator{\Z}{\mathbb{Z}}
\DeclareMathOperator{\R}{\mathbb{R}}
\DeclareMathOperator{\RR}{R}
\DeclareMathOperator{\EE}{E}
\DeclareMathOperator{\DD}{D}

\DeclareMathOperator{\bb}{b}
\DeclareMathOperator{\ttt}{t}
\DeclareMathOperator{\TT}{T}
\DeclareMathOperator{\LLL}{L}
\DeclareMathOperator{\PP}{P}
\DeclareMathOperator{\PPP}{\mathbb{P}}
\DeclareMathOperator{\V}{\mathbb{V}}
\DeclareMathOperator{\C}{\mathbb{C}}

\DeclareMathOperator{\Hol}{\mathcal{O}}

\DeclareMathOperator{\CC}{\mathcal{C}}
\DeclareMathOperator{\LL}{\mathcal{L}}

\DeclareMathOperator{\RHom}{RHom}
\DeclareMathOperator{\Conv}{Conv}
\DeclareMathOperator{\qgood}{q-good}
\DeclareMathOperator{\hol}{hol}
\DeclareMathOperator{\Exp}{Exp}

\theoremstyle{plain}
\newtheorem{theorem}{Theorem}[section]
\newtheorem{corollary}[theorem]{Corollary}
\newtheorem{lemma}[theorem]{Lemma}
\newtheorem{proposition}[theorem]{Proposition}

\theoremstyle{definition}
\newtheorem{definition}[theorem]{Definition}

\theoremstyle{remark}
\newtheorem{remark} [theorem]{Remark}

\numberwithin{equation}{section}

\begin{document}
\selectlanguage{english}

\title{Enhanced Laplace transform and holomorphic Paley-Wiener-type theorems}
\author{Christophe Dubussy}
\date{\today}
\address{B\^at. B37 \\ Analyse alg\'{e}brique \\ Quartier Polytech 1 \\ All\'{e}e de la d\'{e}couverte 12 \\ 4000 Li\`{e}ge \\ Belgique.}
\email{C.Dubussy@uliege.be}
\thanks{We were supported by a FNRS grant (ASP 111496F)}
\subjclass[2010]{Primary: 44A10; Secondary: 32C38, 35A27.}
\keywords{Enhanced subanalytic sheaves, Laplace transform, Legendre transform, Paley-Wiener-type theorems.}

\begin{abstract}
Starting from a remark about the computation of Kashiwara-Schapira's enhanced Laplace transform by using the Dolbeault complex of enhanced distributions, we explain how to obtain explicit holomorphic Paley-Wiener-type theorems. As an example, we get back some classical theorems due to Polya and M\'{e}ril as limits of tempered Laplace-isomorphisms. In particular, we show how contour integrations naturally appear in this framework. 
\end{abstract}

\maketitle
\vspace{-1em}
\section{Introduction}

In \cite{Dagn16}, A. D'Agnolo and M. Kashiwara extended the Riemann-Hilbert correspondence to cover the case of holonomic $\mathcal{D}$-modules with irregular singularities. This progress allows to deal with integral transforms which have an irregular kernel, such as the Laplace transform on a complex vector space $\V$. A first work on this subject was done in \cite{Dagn14}, where A. D'Agnolo studied the Laplace transform in the non-conic case, extending the results of \cite{Kash97}. In particular, he explained how this sheaf-theoretic transformation allows to get back some classical real Paley-Wiener theorems.

\medskip

More recently, in \cite{Kash16}, M. Kashiwara and P. Schapira made a full rewriting of the theory of integral transforms with irregular kernel, using the notion of enhanced ind-sheaves introduced in \cite{Dagn16}. In particular, they treated the case of the Laplace transform. 

\medskip

Let $\V$ be a $n$-dimensional complex vector space and $\V^*$ its complex dual. Let us consider the bordered spaces $\V_{\infty} = (\V,\overline{\V})$ and $\V^*_{\infty} = (\V^*,\overline{\V}^*)$ where $\overline{\V}$ (resp. $\overline{\V}^*$) is the projective compactification of $\V$ (resp $\V^*$). In \cite{Kash16}, the authors proved that there is a canonical isomorphism 
\begin{equation}\label{equ:isointroduction}
{}^{\EE}\!\mathcal{F}^a_{\V}(\Omega^{\EE}_{\V_{\infty}})[n] \simeq \Hol^{\EE}_{\V^*_{\infty}}
\end{equation}
in $\EE^{\bb}(\C^{\Sub}_{\V^*_{\infty}})$, where ${}^{\EE}\!\mathcal{F}^a_{\V}$ is the enhanced Fourier-Sato functor and $\Omega^{\EE}_{\V_{\infty}}$ (resp. $\Hol^{\EE}_{\V^*_{\infty}}$) is the complex of enhanced holomorphic top-forms on $\V_{\infty}$ (resp. enhanced holomorphic functions on $\V^*_{\infty}$). 

\medskip
The first part of our paper consists in a remark about $(\ref{equ:isointroduction})$. Using the Dolbeault complex $\Db^{T, \bullet, \bullet}$ of enhanced distributions, we show that there is a canonical morphism
\begin{equation}\label{equ:explicitintroduction}
{q}_{\R !!} ({\mu_{-\langle z,w \rangle}}_* p_{\R}^{-1}\Db^{\TT,n,\bullet+n}_{\V_{\infty}}) \to \Db^{\TT,0,\bullet}_{\V^*_{\infty}},
\end{equation}
where $p : \V_{\infty} \times \V^*_{\infty} \to \V_{\infty}$ and $q : \V_{\infty} \times \V^*_{\infty} \to \V^*_{\infty}$ are the two projections and $\mu_{-\langle z,w \rangle}$ is the translation by $-\langle z,w \rangle$. This morphism encodes the usual positive Laplace transform of distributions and is equivalent to~(\ref{equ:isointroduction}) in $\EE^{\bb}(\C^{\Sub}_{\V^*_{\infty}})$. In order to prove that, we had to trace back all the steps in the construction of~(\ref{equ:isointroduction}), which led to several morphisms defined in \cite{Kash84}, \cite{Kash96} and \cite{Kash01}. Of course, we shall only present a sketch of this historical compilation and refer to the relevant articles when needed. 

\medskip

This remark has an immediate application. Let $f : \V \to \R$ be a continuous function and $S$ be a subanalytic closed subset of $\V$. Let us denote by $f_S$ the function which is equal to $f$ on $S$ and to $+\infty$ on $\V \backslash S$ and assume that $f_S$ is convex. Under suitable conditions, we shall show that there is a commutative diagram

\[
\xymatrix{H^n_{S}(\V, e^{-f}\Omega^{\ttt}_{\overline{\V}}) \ar[r]^{\sim} & H^0(\V^*, e^{f_S^*}\Hol^{\ttt}_{\overline{\V}^*}) \ar[d] \\
\Gamma(\V, \mathscr{I}\Gamma_{S}(e^{-f}\Db^{\ttt, n,n}_{\overline{\V}}))\ar[r]\ar[u] & \Gamma(\V^*, e^{f_S^*}\Db^{\ttt}_{\overline{\V}^*})}
\] 
where $f_S^*$ is the Legendre transform of $f_S$ and $\Db^{t,\bullet,\bullet}$ (resp. $\Omega^{\ttt}, \Hol^{\ttt}$) is the Dolbeault complex of tempered distributions  (resp. complex of tempered holomorphic top-forms, functions). Here, the top isomorphism comes from \cite{Kash16} and the bottom one is given by the positive Laplace transform of distributions. 

\medskip
The main part of the paper consists in explaining how this diagram allows to obtain holomorphic Paley-Wiener-type theorems. We shall give two examples. Let $\V$ be of dimension $1$, let $K$ be a non-empty convex compact subset of $\V$ and let $h_K : w \in \V^* \mapsto \sup_{z\in K} \Re \langle z,w \rangle$ be its support function. Under the same suitable conditions as above, we prove that, for all $\varepsilon>0,$ there is a canonical isomorphism of $\C$-vector spaces
\begin{equation}
\Omega^{\ttt}_{\PPP}(\V\backslash K_{\varepsilon})/\Omega^{\ttt}_{\PPP}(\V) \xrightarrow{\sim} e^{h_{K_{\varepsilon}}}\Hol^{\ttt}_{\PPP^*}(\V^*),
\end{equation}
where $\PPP = \V \cup \{\infty\}$ and $\PPP^* = \V^* \cup \{\infty\}$. Taking the projective limit on $\varepsilon \to 0$, we get an isomorphism
\begin{equation}
\Omega^{\ttt_{\infty}}_{\PPP}(\V\backslash K)/\Omega^{\ttt}_{\PPP}(\V) \xrightarrow{\sim} \varprojlim_{\varepsilon \to 0} e^{h_{K_{\varepsilon}}}\Hol^{\ttt}_{\PPP^*}(\V^*),
\end{equation}
\noindent
where $\Omega^{\ttt_{\infty}}_{\PPP}$ is the sheaf of holomorphic forms which are tempered only at infinity. Given a global $\C$-linear coordinate $z$ of $\V$ and $w$ its dual coordinate, we show that this last isomorphism can be explicited by $[u(z)dz] \mapsto v$, with
\[
v(w) = \int_{C(0,r)^+} e^{zw} u(z)dz,
\] 
where $C(0,r)^+$ is a positively oriented circle, which encloses $K$. This result is actually equivalent to an old theorem due to Polya (see e.g. \cite{Bere95}). However, in our approach, it is obtained as a limit of tempered Laplace-isomorphisms and, moreover, the contour integration naturally appears as a cohomological phenomenon. In the last section, we prove a non-compact analogue of this theorem, which is due to Méril (see \cite{Meril83}). This result is also obtained as a limit of tempered Laplace-isomorphisms.

\section{Background and notations}

\paragraph{2.1}
We refer to \cite{Kash90} for sheaf theory and derived categories and to \cite{Kash03} for $\mathscr{D}$-modules. Let $X$ be a complex manifold and $Y \subset X$ be a complex analytic hypersurface. One denotes by $\Hol_X(*Y)$ the sheaf of holomorphic functions with poles in $Y$. For any $\varphi \in \Hol_X(*Y)$, one sets 
\[
\mathscr{D}_X e^{\varphi} = \mathscr{D}_X / \{P : Pe^{\varphi} = 0 \,\,\text{on}\,\, U\} \qquad \text{and} \qquad \mathscr{E}^{\varphi}_{U|X} =\mathscr{D}_X e^{\varphi}  \overset{\DD}\otimes \Hol_X(*Y),
\] 
where $U = X \backslash Y.$

\medskip

\paragraph{2.2}
We refer to \cite{Kashi16}, \cite{Kash01} and \cite{Prel08} for subanalytic spaces and subanalytic sheaves. Let $M$ be a subanalytic space. We write for short $\text{Op}_M^{\Sub, c}$ the set of open subanalytic subsets of $M$ which are relatively compact. We denote by $M^{\Sub}$ the associated subanalytic site and by $\rho : M \to M^{\Sub}$ the canonical morphism of sites. This morphism induces three functors 

\[
\xymatrixrowsep{1in}
\xymatrixcolsep{2in}
\xymatrix{\text{Mod}(\C_M) \ar@/^1pc/[r]^{\rho_*} \ar@/_1pc/[r]_{\rho_!} & \text{Mod}(\C^{\Sub}_M) \ar[l]|{\rho^{-1}}}
\]

\noindent
between sheaves and subanalytic sheaves. In the following sections, the functor $\rho_!$ normaly occurs in many formulas involving $\mathscr{D}$-modules. In order to have less complicated formulas, we shall not write this functor, as in \cite{Kash16}. The derived category $\DD^{\bb}(\C^{\Sub}_M) :=\DD^{\bb}(\text{Mod}(\C^{\Sub}_M))$ has six Grothendieck operations, namely ${\RR}f_*, f^{-1}, {\RR}f_{!!}, f^!, {\RR}\mathscr{I}\!hom$ and $\overset{\LLL}\otimes.$ Let $Z$ be a subanalytic locally closed subset of $M$. One sets 
$
\mathscr{I}\Gamma_Z(-) = {\RR}\mathscr{I}\!hom(\C_Z,-).
$
We set for short $H^k_Z(U,-) = H^k{\RR}\Gamma(U,\mathscr{I}\Gamma_Z(-))$ for any $U \in \text{Op}_M^{\Sub, c}$.

\medskip

\paragraph{2.3}

Let $M$ be a real analytic manifold. For each $r\in\Z,$ let us denote denote by $\Db_M^r$ the sheaf or $r$-distributional forms. One defines a subanalytic sheaf $\Db^{\ttt, r}_M$ by setting $$\Gamma(U, \Db^{\ttt, r}_M) = \{\omega \in \Db_M^r(U) : \omega \,\, \text{can be extended to} \, \, M\}$$ for any $U \in \text{Op}_M^{\Sub, c}$. By construction, this sheaf is quasi-injective. Other classical definitions of $\Db^{\ttt, r}_M$ can be found in \cite{Kash84}. The sheaf $\Db^{\ttt}_M := \Db^{\ttt, 0}_M$ is called the sheaf of tempered distributions. One also introduces the subanalytic sheaf $\CC^{\infty, \ttt}_M$ of $\CC^{\infty}$-tempered functions by defining $\Gamma(U,\CC^{\infty, \ttt}_M)$ as the set of functions which have, as well as all their derivatives, polynomial growth near the boundary of $U.$ 

\medskip
\noindent
On a complex manifold $X$ of complex dimension $d_X$, one denotes by $\Omega^{\ttt,p}_X \in \DD^{\bb}(\C^{\Sub}_X)$ the Dolbeault complex 
\[
0 \to \Db_X^{\ttt, p,0} \overset{\bar{\partial}}\to \Db_X^{\ttt, p,1} \to \cdots \to \Db_X^{\ttt, p,d_X} \to 0.
\] 
One sets for short $\Hol_X^{\ttt} := \Omega^{\ttt,0}_X$ and $\Omega^{\ttt}_X :=\Omega^{\ttt,d_X}_X.$ 

\medskip

\paragraph{2.4}

We refer to \cite{Dagn16} for bordered spaces and to \cite{Kashi16} for subanalytic sheaves on subanalytic bordered spaces. A bordered space is a couple $M_{\infty} = (M, \widehat{M})$ where $\widehat{M}$ is a good topological space and $M$ an open subset of $\widehat{M}.$ If $M_{\infty}=(M,\widehat{M})$ and $N_{\infty}=(N,\widehat{N})$ are two bordered spaces and if $f : M \to N$ is a coutinuous map, we denote by $\Gamma_f \subset M \times N$ the graph of $f$ and by $\overline{\Gamma}_f$ the closure of $\Gamma_f$ in $\widehat{M} \times \widehat{N}.$ A morphism of bordered spaces $f : M_{\infty} \to N_{\infty}$ is a continuous map $f : M \to N$ such that the canonical projection $\overline{\Gamma}_f \to \widehat{M}$ is proper. Such a morphism is called semi-proper if $\overline{\Gamma}_f \to \widehat{N}$ is proper. The composition of two morphisms is the composition of the underlying continuous maps. If $\widehat{N}$ is compact, then any continuous map $f : M \to N$ is a morphism of bordered spaces. On a subanalytic bordered space $M_{\infty}=(M,\widehat{M})$, we write for short $\text{Op}_{M_{\infty}}^{\Sub, c}$ the set of open subanalytic subsets of $M$ which are relatively compact in $\widehat{M}$ and $\Mod(\C_{M_{\infty}}^{\Sub})$ the category of subanalytic sheaves on $M_{\infty}.$ The six Grothendieck operations extend to this framework.

\medskip

\paragraph{2.5}
We write $\overline{\R} := \R \sqcup \{-\infty, +\infty\}$ the 2-points compatification of $\R$ and we consider the bordered space $\R_{\infty} = (\R, \overline{\R}).$  Let $M_{\infty} = (M,\widehat{M})$ be a subanalytic bordered space. There is a well-defined convolution functor
\[
- \overset{+}\otimes - : \DD^{\bb}(\C^{\Sub}_{M_{\infty}\times \R_{\infty}}) \times \DD^{\bb}(\C^{\Sub}_{M_{\infty}\times \R_{\infty}}) \to \DD^{\bb}(\C^{\Sub}_{M_{\infty}\times \R_{\infty}}),
\] 
which has a right adjoint $\mathscr{I}\!hom^{+}(-,-)$. If $\varphi : M \to \R$ is a continuous function, we denote by $\mu_{\varphi} : M_{\infty} \times \R_{\infty} \to M_{\infty} \times \R_{\infty}$ the map defined by $\mu_{\varphi}(x,t) = (x, t + \varphi(x)).$ Let us recall that $\C_{\{t=\varphi(x)\}} \overset{+}\otimes F \simeq R{\mu_{\varphi}}_* F$ for any $F \in \DD^{\bb}(\C^{\Sub}_{M_{\infty}\times \R_{\infty}}).$ 

\medskip

\paragraph{2.6}
We refer to \cite{Dagn16} for enhanced ind-sheaves (or similarly enhanced subanalytic sheaves) and to \cite{Kashi16} for a pedagogical exposition. On a subanalytic bordered space $M_{\infty}=(M,\widehat{M})$, one defines the category of enhanced subanalytic sheaves by setting 
\[
{\EE}^{\bb}(\C^{\Sub}_{M_{\infty}}) = \DD^{\bb}(\C^{\Sub}_{M_{\infty}\times \R_{\infty}})/\{F : (\C_{\{t\geq 0\}} \oplus \C_{\{t \leq 0\}}) \overset{+}\otimes F \simeq 0\}.
\] 
We denote by $$Q_{M_{\infty}} : \DD^{\bb}(\C^{\Sub}_{M_{\infty}\times \R_{\infty}}) \to {\EE}^{\bb}(\C^{\Sub}_{M_{\infty}})$$ the quotient functor. If the context is clear, we shall simply write $\C_{A}$ instead of $Q_{M_{\infty}}(\C_A)$ when $A$ is a subanalytic locally closed subset of $M \times \R.$ Recall that there is a well-defined hom-functor
\[
{\RR}\mathscr{I}\!hom^{\EE}(-,-) : \EE^{\bb}(\C^{\Sub}_{M_{\infty}})^{\text{op}} \times \EE^{\bb}(\C^{\Sub}_{M_{\infty}}) \to \DD^{\bb}(\C^{\Sub}_{M_{\infty}}).
\]
One sets
\[
{\RR}\text{Hom}^{\EE}(F_1,F_2) = {\RR}\Gamma(M, \rho^{-1}{\RR}\mathscr{I}\!hom^{\EE}(F_1,F_2)),
\]
for all $F_1,F_2 \in {\EE}^{\bb}(\C^{\Sub}_{M_{\infty}}).$ Moreover, if $f : M_{\infty} \to N_{\infty}$ is a morphism of subanalytic bordered spaces, one sets $$f_{\R} := f\times \text{id}_{\R} : M_{\infty} \times \R_{\infty} \to N_{\infty} \times \R_{\infty}$$ and one writes ${\EE}f_{*}, {\EE}f^{-1}\hspace{-0.2em}, {\EE}f_{!!}$ and ${\EE}f^!$ the functors which are the factorisations of ${\RR}{f_{\R}}_*, f_{\R}^{-1}$, ${\RR}{f_{\R}}_{!!}$ and $f_{\R}^!$ through $Q_{M_{\infty}}$ and $Q_{N_{\infty}}$. Finally, the convolution functors $\overset{+}\otimes, \mathscr{I}\!hom^{+}$ also factor through the quotient and we keep the same notations for their factorisation. 

\medskip

\paragraph{2.7} 
Assume that $\widehat{M}$ is real analytic. Let $\PP$ be the projective compactification of $\R$ and let $j : M_{\infty} \times \R_{\infty} \to \widehat{M} \times \PP$ be the canonical inclusion. Let $t$ be the affine coordinate of $\PP$. Then $\partial_t$ extends to a vector field on $\PP$ and, for each $r\in \Z$, one sets 
\[
\Db^{\TT, r}_{M_{\infty}} = j^{-1}( \ker(\Db^{\ttt,r}_{\widehat{M}\times \PP} \overset{\partial_t -1}\longrightarrow \Db^{\ttt,r}_{\widehat{M}\times \PP})) \in \Mod(\C^{\Sub}_{M_{\infty}\times \R_{\infty}}).
\] 
One calls $\Db^{\TT}_{M_{\infty}} :=\Db^{\TT, 0}_{M_{\infty}}$ the sheaf of enhanced distributions and one sets for short $\Db^{\EE, r}_{M_{\infty}} = Q_{M_{\infty}}(\Db^{\TT, r}_{M_{\infty}})$ (see \cite{Dagn16} and \cite{Kashi16}).

\medskip

\paragraph{2.8}
The notion of complex bordered space is similarly defined in section $4.3$ of \cite{Kash16}. On a complex bordered space $X_{\infty}$, one denotes by $\DD^{\bb}(\mathscr{D}_{X_{\infty}})$ (resp. $\DD^{\bb}_{\text{hol}}(\mathscr{D}_{X_{\infty}})$ and $\DD^{\bb}_{\text{q-good}}(\mathscr{D}_{X_{\infty}})$) the bounded derived categories of $\mathscr{D}$-modules (resp. holonomic $\mathscr{D}$-modules and quasi-good $\mathscr{D}$-modules) over $X_{\infty}.$ Moreover, the usual operations of $\mathscr{D}$-modules naturally extend to this framework. If $f : X_{\infty}=(X, \widehat{X}) \to Y_{\infty}=(Y, \widehat{Y})$ is a morphism of complex bordered spaces, one notes $\mathscr{D}_{X_{\infty}\to Y_{\infty}}$ the associated transfer bi-module. On a complex bordered space $X_{\infty}$, one defines $\Omega^{\EE,p}_{X_{\infty}}$
by the Dolbeault complex 
\[
Q_{X_{\infty}}(0\to \Db^{\TT,p,0}_{X_{\infty}} \overset{\bar{\partial}}\to \Db^{\TT,p,1}_{X_{\infty}} \to \dots \to \Db^{\TT,p,d_X}_{X_{\infty}} \to 0).
\] 
One sets for short 
$
\Hol^{\EE}_{X_{\infty}} := \Omega^{\EE,0}_{X_{\infty}}$ and $\Omega^{\EE}_{X_{\infty}} := \Omega^{\EE,d_X}_{X_{\infty}}.$ Finally, one also defines the enhanced de Rham and solution functors by setting 
\begin{align*}
\mathcal{D}\mathcal{R}^{\EE}_{X_{\infty}} &: \DD^{\bb}(\mathscr{D}_{X_{\infty}}) \to {\EE}^{\bb}(\C^{\Sub}_{X_{\infty}}), \quad \mathcal{M}\mapsto \Omega^{\EE}_{X_{\infty}} \overset{\LLL}\otimes_{\mathscr{D}_{X_{\infty}}} \mathcal{M}, \\
\Sol^{\:\EE}_{X_{\infty}} &: \DD^{\bb}(\mathscr{D}_{X_{\infty}})^{\text{op}} \to {\EE}^{\bb}(\C^{\Sub}_{X_{\infty}}), \quad \mathcal{M} \mapsto {\RR}\mathcal{H}om_{\mathscr{D}_{X_{\infty}}} (\mathcal{M}, \Hol^{\EE}_{X_{\infty}}).
\end{align*}

\section{Integration and pullback of tempered distributions}

Integration and pullback of distributional forms are very classical constructions in differential calculus (see e.g. \cite{Rham84}). In this short section, we recall some results of \cite{Kash84}, \cite{Kash96} and \cite{Kash01} about integration and pullback of tempered distributional forms. 

\begin{proposition}[\cite{Kash96}, Proposition $4.3$ and Theorem $5.7$]\label{prop:tempintegration}
Let $f : X \to Y$ be a holomorphic map between complex manifolds. The integration of distributions along the fibers of $f$ induces a morphism of double complexes
\begin{equation}
\int_f : f_{!!} \Db^{\ttt, \bullet+d_X, \bullet+d_X}_{X} \rightarrow \Db^{\ttt, \bullet+d_Y, \bullet+d_Y}_{Y}
\end{equation}
and thus, a morphism
\begin{equation}\label{equ:tempintegration}
\int_f : {\RR} f_{!!} \Omega^{\ttt,p+d_X}_{X} [d_X] \rightarrow \Omega^{\ttt,p+d_Y}_{Y} [d_Y]
\end{equation}
\noindent in ${\DD}^{\bb}(\C^{\Sub}_{Y})$, for each $p\in \Z.$ 
\end{proposition}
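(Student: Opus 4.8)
The plan is to obtain $\int_f$ from the classical fiber-integration of distributional forms and then to check that it respects both the temperedness conditions and the subanalytic-sheaf structure. First I would use that fiber-integration is functorial under composition to factor $f$ through its graph, $f=p_2\circ\Gamma_f$ with $\Gamma_f\colon X\hookrightarrow X\times Y$ a closed embedding and $p_2\colon X\times Y\to Y$ the projection, and then, iterating over coordinate $\R$-directions in local charts on $X$, reduce the projection to the elementary one $q\colon Z\times\R\to Z$; it thus suffices to treat a closed embedding and this elementary projection. For any such map the classical integration $\int_f$ of distributional forms with support proper over the target is a $\C$-linear morphism $f_{!!}\Db^{\bullet,\bullet}_X\to\Db^{\bullet,\bullet}_Y$ compatible with restriction to opens; by the classical degree rule for integration along fibers it sends bidegree $(a+d_X,b+d_X)$ on $X$ to bidegree $(a+d_Y,b+d_Y)$ on $Y$ — which is why the double complexes are indexed as in the statement — and it satisfies the projection formula together with $\int_f\circ d=d\circ\int_f$. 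Since $\int_f$ preserves bidegrees, splitting $d=\partial+\bar\partial$ shows that it commutes separately with $\partial$ and with $\bar\partial$; these are exactly the relations needed for $\int_f$ to be a morphism of double complexes, so the remaining point is that $\int_f$ maps tempered sections to tempered sections, compatibly with the colimit defining $f_{!!}$.

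The hard part will be this temperedness estimate. For a closed embedding one must see that the direct image of a tempered distributional form along a subanalytic closed embedding stays tempered; this follows from the comparability, up to a positive power via {\L}ojasiewicz's inequality, of the distance to the boundary measured in $X\times Y$ and in $X$, as in \cite{Kash84}. For $q\colon Z\times\R\to Z$ I would invoke the structure theorem for tempered distributions: over a relatively compact subanalytic open $V\subset Z$, a section of $q_{!!}\Db^{\ttt}_{Z\times\R}$ is represented by a distributional form $\omega$ on $V\times\R$ whose support is proper over $V$ and which, near the boundary of $V\times\R$, is a finite-order derivative of a continuous form of polynomial growth; integrating along $\R$, the $\R$-derivatives yield only boundary terms that vanish because $\supp\omega$ is proper over $V$, the $Z$-derivatives commute with $\int_\R$, and one is left with the integral over a set proper over $V$ of a continuous form of polynomial growth near $\partial V$, which again grows polynomially near $\partial V$. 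Hence $\int_q\omega$ is tempered on $V$, and naturality in $V$ — for the restriction maps of the subanalytic site and for the colimit defining $q_{!!}$ — is then straightforward. I expect the uniform control of $\int_\R$ as $V$ varies and near $\partial V$ to be the genuine obstacle, since it is precisely where subanalyticity ({\L}ojasiewicz inequalities, finiteness of subanalytic stratifications) is used.

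Finally, the morphism of double complexes $\int_f\colon f_{!!}\Db^{\ttt,\bullet+d_X,\bullet+d_X}_X\to\Db^{\ttt,\bullet+d_Y,\bullet+d_Y}_Y$ so obtained restricts — on taking, for each side, the row of the $\bullet$-indexed double complex on which the holomorphic Dolbeault degree equals $p+d_X$ (resp.\ $p+d_Y$), with its differential $\bar\partial$ — to a morphism of complexes $f_{!!}\Omega^{\ttt,p+d_X}_X[d_X]\to\Omega^{\ttt,p+d_Y}_Y[d_Y]$ in $\Mod(\C^{\Sub}_Y)$. Since the $\Db^{\ttt,r}_X$ are quasi-injective, the complex $f_{!!}\Omega^{\ttt,p+d_X}_X[d_X]$ represents $\RR f_{!!}\Omega^{\ttt,p+d_X}_X[d_X]$ (acyclicity for $f_{!!}$, cf.\ \cite{Kash01}), and one obtains the desired morphism in $\DD^{\bb}(\C^{\Sub}_Y)$. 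For the full details I would refer to \cite{Kash96}, Proposition~4.3 and Theorem~5.7, and to \cite{Kash01}.
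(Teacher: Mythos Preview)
The paper does not supply a proof of this proposition: it is stated as a result quoted from \cite{Kash96} (Proposition~4.3 and Theorem~5.7), with no argument given. Your sketch correctly outlines the standard strategy used there --- factoring $f$ as a closed embedding followed by a projection, checking temperedness via {\L}ojasiewicz-type estimates in each case, and invoking quasi-injectivity of $\Db^{\ttt,r}$ to pass from $f_{!!}$ to $\RR f_{!!}$ --- and you yourself defer to \cite{Kash96} and \cite{Kash01} for the details, which is exactly what the paper does.
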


\begin{proposition}[\cite{Kash01}, Lemmas $7.4.4$ and $7.4.5$]\label{prop:tempintegration2}
Let $f : X \to Y$ be a holomorphic map between complex manifolds. There is a natural isomorphism
\begin{equation}\label{equ:modadjointtemp}
\Omega^{\ttt}_{X} \overset{\LLL}\otimes_{\mathscr{D}_{X}} \mathscr{D}_{X \to Y} [d_X] \xrightarrow{\sim} f^{!} \Omega^{\ttt}_{Y} [d_Y]
\end{equation}
in $\DD^{\bb}(\C^{\Sub}_{X}).$ Its adjoint morphism 
\[
{\RR}f_{!!} (\Omega^{\ttt}_{X} \overset{\LLL}\otimes_{\mathscr{D}_{X}} \mathscr{D}_{X \to Y})[d_X] \rightarrow \Omega^{\ttt}_{Y} [d_Y]
\]
induces, thanks to the canonical section $1_{X\to Y}$ of $\mathscr{D}_{X \to Y}$, a morphism 
\[
{\RR} f_{!!} \Omega^{\ttt}_{X} [d_X] \rightarrow \Omega^{\ttt}_{Y} [d_Y],
\]
which is equivalent to~(\ref{equ:tempintegration}) when $p=0.$
\end{proposition}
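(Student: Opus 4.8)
The plan is to recognize~(\ref{equ:modadjointtemp}) as the tempered incarnation, evaluated on $\mathscr{D}_Y$, of the classical compatibility of the de Rham functor with the $\mathscr{D}$-module inverse image, and to replay the usual proof of the latter while feeding in, at each step, the functorial properties of tempered distributions recalled above and established in \cite{Kash84}. Concretely, I would first factor $f$ through its graph, $f = q\circ\iota$, with $\iota = \iota_f\colon X\hookrightarrow X\times Y$ the closed graph embedding of codimension $d_Y$ and $q\colon X\times Y\to Y$ the second projection, a submersion with $d_{X\times Y} = d_X + d_Y$. Both sides of~(\ref{equ:modadjointtemp}) are compatible with this factorization: the left-hand side by associativity of $\overset{\LLL}\otimes$ and the transfer-module composition isomorphism $\mathscr{D}_{X\to Y}\simeq\mathscr{D}_{X\to X\times Y}\overset{\LLL}\otimes_{\iota^{-1}\mathscr{D}_{X\times Y}}\iota^{-1}\mathscr{D}_{X\times Y\to Y}$ (here $\RR\Gamma_X$, hence $\iota^!$, preserves the ambient right $\mathscr{D}_{X\times Y}$-action, so the intermediate module structures make sense), and the right-hand side by $f^!\simeq\iota^! q^!$ together with $\mathscr{D}$-module base change across $\iota$. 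It thus suffices to construct~(\ref{equ:modadjointtemp}) and prove it is an isomorphism when $f = q$ and when $f = \iota$.

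For the submersion $q$ I would resolve $\mathscr{D}_{X\times Y\to Y}$ by the relative Spencer complex of free left $\mathscr{D}_{X\times Y}$-modules $\mathscr{D}_{X\times Y}\otimes_{\Hol_{X\times Y}}\bigwedge^{\bullet}\Theta_{X\times Y/Y}$, tensor over $\mathscr{D}_{X\times Y}$ with the quasi-injective Dolbeault complex $\Omega^{\ttt}_{X\times Y}$, and read off the resulting double complex of tempered top-forms (the $\bar{\partial}$-differential in one direction, the relative holomorphic de Rham differential along the fibres in the other) as a representative of the left-hand side. The tempered relative Dolbeault--Poincar\'e lemma identifies its total complex with $q^{-1}\Omega^{\ttt}_Y[d_X]$, and after the shift $[d_{X\times Y}]$ this becomes $q^{-1}\Omega^{\ttt}_Y[2d_X + d_Y]\simeq q^!\Omega^{\ttt}_Y[d_Y]$, the last isomorphism being the relative dualizing isomorphism for the complex submersion $q$. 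The only non-formal input here is that $\Db^{\ttt}$, unlike ordinary $\Db$, restricts along $q$ via $q^!$ with its orientation twist.

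For the closed embedding $\iota$ I would instead present $\Omega^{\ttt}_X\overset{\LLL}\otimes_{\mathscr{D}_X}\mathscr{D}_{X\to X\times Y}$ via the Koszul complex attached to local holomorphic equations of $X$ in $X\times Y$, whose differential is Koszul contraction, while $\iota^!\Omega^{\ttt}_{X\times Y}[d_{X\times Y}]$ is computed by the tempered local cohomology $\RR\Gamma_X\Db^{\ttt,\bullet,\bullet}_{X\times Y}$ with its determinantal twist and shift. The crux is the tempered Sato--Grothendieck computation of this local cohomology: through the Dolbeault complex it is concentrated in the expected degree and identified, together with its residue differential, with the twisted sheaf of tempered distributions on $X$; matching the residue with the Koszul contraction gives the isomorphism. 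This is the step where the functional analysis of \cite{Kash84} is indispensable --- tempered distributions are not flabby and extension/restriction across $X$ is delicate --- and it is also where one must be scrupulous about the shifts and the orientation twist so that the conventions of~(\ref{equ:modadjointtemp}) are respected.

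For the second assertion I would take the $(\RR f_{!!}, f^!)$-adjoint of~(\ref{equ:modadjointtemp}) and precompose with the morphism $\Omega^{\ttt}_X[d_X]\to(\Omega^{\ttt}_X\overset{\LLL}\otimes_{\mathscr{D}_X}\mathscr{D}_{X\to Y})[d_X]$ induced by $1_{X\to Y}$, obtaining $\beta\colon\RR f_{!!}\Omega^{\ttt}_X[d_X]\to\Omega^{\ttt}_Y[d_Y]$; the claim is that $\beta$ is the morphism~(\ref{equ:tempintegration}) for $p = 0$. Both $\beta$ and $\int_f$ are natural under composition of holomorphic maps, so it suffices to compare them for $f = q$ and $f = \iota$. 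Using the explicit quasi-injective Dolbeault representatives together with the Spencer, respectively Koszul, resolutions of the previous paragraphs, I would lift $\beta$ to a morphism of complexes of subanalytic sheaves and check, section by section over relatively compact subanalytic opens, that in each of the two models it is given by the honest fibre integral --- integration over the fibres of $q$, respectively integration of a distributional form supported on $X$ along the submanifold $X$ --- which is precisely the defining formula of $\int_f$ in \cite{Kash96}; hence $\beta = \int_f$. I expect the closed-embedding analysis of the third paragraph, together with the bookkeeping of all the shifts and orientation/determinantal twists, to be the main obstacle; the remainder is a tempered replay of the classical argument.
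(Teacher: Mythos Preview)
The paper does not supply its own proof of this proposition: it is stated with a citation to \cite{Kash01}, Lemmas~7.4.4 and~7.4.5, and no \texttt{proof} environment follows. So there is no in-paper argument to compare against; the only benchmark is the cited reference.

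Your sketch is the standard route and, as far as I can tell, the one taken in \cite{Kash01}: factor $f$ through its graph as a closed embedding followed by a projection, treat the projection via the relative Spencer resolution of $\mathscr{D}_{X\times Y\to Y}$ together with a tempered relative Dolbeault lemma, and treat the closed embedding via a Koszul resolution and a tempered local-cohomology computation. The second assertion is then reduced to the same two cases by naturality in $f$, and in each model the adjoint morphism is read off as the honest fibre integration. This is correct in outline and matches the architecture of the cited lemmas.

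Two points worth flagging. First, in the submersion step you invoke a ``tempered relative Dolbeault--Poincar\'e lemma'' identifying the total complex with $q^{-1}\Omega^{\ttt}_Y[d_X]$; in the subanalytic setting the inverse image functor is $q^!$ rather than $q^{-1}$ (they differ by the relative dualizing shift), and the actual statement one proves is closer to $q^!\Omega^{\ttt}_Y$ directly --- your subsequent line $q^{-1}\Omega^{\ttt}_Y[2d_X]\simeq q^!\Omega^{\ttt}_Y$ absorbs this, but be careful not to phrase the intermediate step as a result about $q^{-1}$ of a subanalytic sheaf, since $q^{-1}\Db^{\ttt}_Y$ does not behave as in the classical case. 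Second, and as you yourself note, the closed-embedding step is where all the genuine analysis lives: the computation of $\RR\Gamma_X\Db^{\ttt}_{X\times Y}$ and its identification with $\Db^{\ttt}_X$ twisted by the conormal determinant is precisely the content of the results of \cite{Kash84} you cite, and one must check carefully that the residue pairing there matches the Koszul contraction. With those caveats, your plan is sound and essentially reproduces the argument the paper defers to \cite{Kash01}.
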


\begin{proposition}[\cite{Kash84}, Proposition $3.9$]
Let $f : X \to Y$ be a submersive holomorphic map between complex manifolds. The pullback of distributions by $f$ induces a morphism of double complexes
\begin{equation}
f^* : f^{-1} \Db^{\ttt, \bullet, \bullet}_Y \rightarrow \Db^{\ttt, \bullet, \bullet}_X
\end{equation}
and thus, a morphism
\begin{equation}\label{equ:temppullback}
f^* : f^{-1} \Omega_{Y}^{\ttt,p} \rightarrow \Omega_{X}^{\ttt,p}
\end{equation}
\noindent 
in ${\DD}^{\bb}(\C^{\Sub}_{X})$, for each $p\in \Z.$ 
\end{proposition}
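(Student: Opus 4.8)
\emph{Proof idea.} The plan is to construct the morphism of double complexes locally, where $f$ becomes a projection, then glue it, and finally deduce the statement about Dolbeault complexes formally.

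\emph{Local model.} A holomorphic submersion is, in suitable holomorphic coordinates around any point of $X$, a linear projection $\pi : Y' \times Z \to Y'$ with $Y' \subset Y$ open and $Z$ a polydisk. For such a $\pi$, the classical pullback of a distributional form $\omega$ on an open $V \subset Y'$ is the exterior product $\pi^{*}\omega = \omega \boxtimes 1$ with the constant distribution $1$ on $Z$; it is defined on arbitrary open subsets, is functorial with respect to composition of submersions, and preserves the bidegree (see \cite{Rham84}). Since $f$ is holomorphic, $\partial_X$ and $\bar{\partial}_X$ decompose into their $Y'$- and $Z$-components, and since the $Z$-derivatives of $\pi^{*}\omega$ vanish one gets $\partial_X \pi^{*}\omega = \pi^{*}\partial_{Y'}\omega$ and $\bar{\partial}_X \pi^{*}\omega = \pi^{*}\bar{\partial}_{Y'}\omega$. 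These identities are coordinate-free, so the local pullbacks glue to a morphism of double complexes $f^{-1}\Db^{\bullet,\bullet}_Y \to \Db^{\bullet,\bullet}_X$ of sheaves for the usual topology.

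\emph{Passing to tempered sheaves.} The substance of the statement is that $f^{*}$ is compatible with the subanalytic structure. Using the description recalled in paragraph~2.3, a section of $\Db^{\ttt,r}_Y$ over a relatively compact subanalytic open $V$ is the restriction of a global distributional form $\widetilde{\omega} \in \Db^{r}_Y(Y)$; since $f$ is a submersion, $f^{*}\widetilde{\omega}$ is globally defined on $X$, and for any $U \in \Op^{\Sub,c}_{X}$ with $f(U) \subset V$ one has $f^{*}(\widetilde{\omega}|_V)|_U = (f^{*}\widetilde{\omega})|_U$, which is therefore tempered on $U$. Unwinding the definition of $f^{-1}$ on the subanalytic site (a presheaf colimit over subanalytic opens containing $f(U)$, followed by sheafification), together with the functoriality of $f^{*}$ and the fact that $\Db^{\ttt,r}_X$ is a subanalytic sheaf, this promotes the morphism of the previous step to a morphism of double complexes $f^{-1}\Db^{\ttt,\bullet,\bullet}_Y \to \Db^{\ttt,\bullet,\bullet}_X$. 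One could equivalently work with the growth-condition definition of \cite{Kash84}, using the {\L}ojasiewicz inequality for the subanalytic map $f$ to bound $\mathrm{dist}(\xi,\partial U)$ from above by a power of $\mathrm{dist}(f(\xi),\partial V)$ on relatively compact pieces, so that polynomial growth is transported from $V$ to $U$; the relative compactness of the opens in $\Op^{\Sub,c}$ is precisely what controls the otherwise non-compact fibres. I expect this step to be the only real obstacle.

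\emph{Dolbeault complexes.} Finally, fix $p \in \Z$ and keep the $p$-th line of the double complex: the previous step provides a morphism of complexes of subanalytic sheaves $f^{-1}\bigl(\Db^{\ttt,p,\bullet}_Y\bigr) \to \Db^{\ttt,p,\bullet}_X$. Since $f^{-1}$ is exact, applying it termwise to the Dolbeault complex $\Omega^{\ttt,p}_Y$ computes $f^{-1}\Omega^{\ttt,p}_Y$ in $\DD^{\bb}(\C^{\Sub}_X)$, so this is exactly the sought morphism $f^{*} : f^{-1}\Omega^{\ttt,p}_Y \to \Omega^{\ttt,p}_X$ in $\DD^{\bb}(\C^{\Sub}_X)$.
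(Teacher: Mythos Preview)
The paper does not give its own proof of this proposition: it is stated with an attribution to \cite{Kash84}, Proposition~3.9, and no argument follows. So there is nothing in the paper to compare your proposal against.

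On its own merits, your sketch is correct and is essentially the standard argument. The local model via projection and the compatibility of $f^*$ with the Dolbeault bigrading are classical; the passage to the derived category via exactness of $f^{-1}$ is formal. The only point where care is needed---and you flag it yourself---is the tempered step: your extendability argument (if $\omega$ extends to $Y$ then $f^{*}\omega$ extends to $X$) is the cleanest way to see it and matches the definition recalled in paragraph~2.3, so this is fine. The alternative {\L}ojasiewicz-type estimate you mention is closer in spirit to how \cite{Kash84} originally treats moderate growth, but for the purposes of this paper the extension characterisation suffices.
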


\begin{proposition}[\cite{Kash96}, Theorems $4.5$, $5.8$ and \cite{Kash01}, Lemma $7.4.9$]
Let $f : X \to Y$ be a holomorphic map between complex manifolds. There is a natural morphism
\begin{equation}\label{equ:modpullback}
\mathscr{D}_{X \to Y} \otimes_{f^{-1}\mathscr{D}_{Y}} f^{-1}\Hol^{\ttt}_Y \rightarrow \Hol^{\ttt}_X.
\end{equation}
in $\DD^{\bb}(\C^{\Sub}_{X}).$ The canonical section $1_{X\to Y}$ induces a morphism
\[
f^{-1}\Hol^{\ttt}_Y \rightarrow \Hol^{\ttt}_X,
\]
which is equivalent to~(\ref{equ:temppullback}) when $p=0,$ if $f$ is submersive.
\end{proposition}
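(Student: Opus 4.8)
The plan is to construct~(\ref{equ:modpullback}) by factorizing $f$ through its graph, $f=p_Y\circ\delta$, where $\delta\colon X\hookrightarrow X\times Y$, $x\mapsto(x,f(x))$, is a closed embedding and $p_Y\colon X\times Y\to Y$ is a projection. Using the standard identification of transfer bimodules, $\mathscr{D}_{X\to Y}\simeq\mathscr{D}_{X\to X\times Y}\overset{\LLL}\otimes_{\delta^{-1}\mathscr{D}_{X\times Y}}\delta^{-1}\mathscr{D}_{X\times Y\to Y}$, it then suffices to build the desired morphism for a projection and for a closed embedding and to compose the two. Throughout one works with the explicit Dolbeault representatives $\Db^{\ttt,0,\bullet}$ of $\Hol^{\ttt}$, whose terms are quasi-injective subanalytic sheaves, so that each arrow is a genuine morphism of complexes and descends to $\DD^{\bb}(\C^{\Sub}_X)$. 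At the end one must also recognise, for submersive $f$, the image of the canonical section $1_{X\to Y}$ as the pullback of $(0,\bullet)$-forms~(\ref{equ:temppullback}).

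For a projection $g\colon Z=X\times Y\to Y$ --- or more generally any submersion --- the preceding proposition supplies the $g^{-1}\Hol_Y$-linear morphism of Dolbeault complexes $g^{*}\colon g^{-1}\Hol^{\ttt}_Y\to\Hol^{\ttt}_Z$, which is~(\ref{equ:temppullback}) for $p=0$. Since $\mathscr{D}_{Z\to Y}\otimes_{g^{-1}\mathscr{D}_Y}g^{-1}\Hol^{\ttt}_Y=\Hol_Z\otimes_{g^{-1}\Hol_Y}g^{-1}\Hol^{\ttt}_Y$, one defines the morphism~(\ref{equ:modpullback}) for $g$ by $a\otimes u\mapsto a\cdot g^{*}u$, using the $\Hol_Z$-module structure of $\Hol^{\ttt}_Z$. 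This is well defined over $g^{-1}\Hol_Y$ because $g^{*}$ is $g^{-1}\Hol_Y$-linear, and the verification that it is in fact $g^{-1}\mathscr{D}_Y$-balanced --- hence that it is the transfer-module inverse image --- is exactly the chain rule for $g$. No new growth estimate is needed here, since pulling a tempered form back along a submersion preserves temperedness.

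The substantial case is the closed embedding $i\colon X\hookrightarrow Y$. In a local model $Y\cong X\times\C^n$ with coordinates $t=(t_1,\dots,t_n)$ cutting out $X$, one resolves $\mathscr{D}_{X\to Y}$ as a $(\mathscr{D}_X,i^{-1}\mathscr{D}_Y)$-bimodule by the Koszul complex attached to right multiplication by $t_1,\dots,t_n$; tensoring over $i^{-1}\mathscr{D}_Y$ with $i^{-1}\Hol^{\ttt}_Y$ then presents $\mathscr{D}_{X\to Y}\overset{\LLL}\otimes_{i^{-1}\mathscr{D}_Y}i^{-1}\Hol^{\ttt}_Y$ as a Koszul-type complex $i^{-1}\Hol^{\ttt}_Y\otimes_{\C}\bigwedge^{\bullet}\C^n$ with differential built from the $t_j$. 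One then defines the arrow to $\Hol^{\ttt}_X$ as the composite of the collapse of this Koszul complex with the operation ``take the normal Taylor jets along $X$ and restrict them to $\{t=0\}$''. The hard part --- and the reason the statement is a theorem rather than a formality --- is the estimate that the coefficients of the $t$-Taylor expansion of a distribution which is tempered near the boundary of a relatively compact subanalytic open of $Y$ are again tempered near the boundary of its trace on $X$, uniformly in the coefficient and compatibly with $\bar\partial$. This is precisely what is proved in \cite{Kash84} (around Proposition $3.9$) and recast functorially in \cite{Kash96} (Theorems $4.5$ and $5.8$) and \cite{Kash01} (Lemma $7.4.9$); alternatively, $i$ being proper, one can pin the arrow down through the integration/duality morphism~(\ref{equ:modadjointtemp}). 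Granting the estimate, the local morphisms are independent of the splitting $Y\cong X\times\C^n$ up to the canonical $\mathscr{D}_X$-action, hence glue into~(\ref{equ:modpullback}) for $i$.

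It remains to compose the embedding and projection arrows along the graph factorization to obtain~(\ref{equ:modpullback}) for arbitrary holomorphic $f$, to check its functoriality in $f$ (so that the construction is independent of auxiliary choices), and, for submersive $f$, to identify the image of $1_{X\to Y}$ with~(\ref{equ:temppullback}): unravelling both constructions, this last identification reduces to the elementary compatibilities $\delta^{*}\circ p_Y^{*}=f^{*}$ and $p_X\circ\delta=\mathrm{id}_X$ for pullback of distributions. As indicated, the entire analytic difficulty is concentrated in the tempered jet estimate of the closed-embedding case; the rest is bookkeeping with Dolbeault complexes, Koszul resolutions and the functoriality of transfer bimodules.
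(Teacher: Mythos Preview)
The paper does not actually prove this proposition: it is stated with attributions to \cite{Kash96} (Theorems~4.5 and~5.8) and \cite{Kash01} (Lemma~7.4.9) and no argument is given. Your outline via the graph factorisation $f=p_Y\circ\delta$, treating the projection by the $\Hol_Z$-linear extension of the distributional pullback and the closed embedding by a Koszul resolution together with the temperedness of normal derivatives, is precisely the standard route followed in those references, and your identification of the analytic crux (the closed-embedding estimate) is correct.

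One small remark: the pointer to \cite{Kash84}, Proposition~3.9, for the closed-embedding estimate is misplaced --- that result concerns the submersive case (it is the input for the preceding proposition). The estimate you need for the embedding is part of what \cite{Kash96}, Theorems~4.5 and~5.8, establish; alternatively, as you note, one can deduce it by duality from the integration isomorphism~(\ref{equ:modadjointtemp}) since $i$ is proper. Apart from this citation slip, your reconstruction matches the literature and provides strictly more detail than the paper itself.
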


\section{Operations on enhanced distributions}

In this section, we extend the previous constructions to enhanced distributional forms on bordered spaces and we also treat the case of the multiplication by an exponential factor. Using Dolbeault resolutions, we point out that these constructions are encoded in the important results of \cite{Kash16}.

\begin{lemma}\label{lem:acyclic}
Let $f : M_{\infty} = (M,\widehat{M}) \to N_{\infty} = (N,\widehat{N})$ be a morphism of real analytic bordered spaces. The sheaf $\Db^{\TT}_{M_{\infty}}$ is acyclic for $f_{\R}{}_{!!}.$
\end{lemma}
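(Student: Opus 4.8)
The plan is to resolve $\Db^{\TT}_{M_{\infty}}$ by a two-term complex whose terms are already acyclic for $\RR f_{\R !!}$, and then to reduce the lemma to a single surjectivity statement in the variable $t$. First, recall from~2.7 that $\Db^{\TT}_{M_{\infty}} = j^{-1}G$, where $j : M_{\infty}\times\R_{\infty}\to\widehat{M}\times\PP$ is the canonical inclusion and $G = \ker\bigl(\Db^{\ttt,0}_{\widehat{M}\times\PP}\xrightarrow{\partial_{t}-1}\Db^{\ttt,0}_{\widehat{M}\times\PP}\bigr)$. The operator $\partial_{t}-1$ is locally solvable on distributions, and, choosing the primitive with the appropriate boundary behaviour, one keeps the solution tempered both near $\widehat{M}\setminus M$ and near the point at infinity of $\PP$ (the exponential being controlled there by elementary estimates such as $|e^{t}\int_{t}^{0}e^{-s}\,ds| = |1-e^{t}| \le 1$ for $t\le 0$). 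Hence $\partial_{t}-1$ is an epimorphism of subanalytic sheaves, and applying the exact functor $j^{-1}$ gives a short exact sequence of subanalytic sheaves on $M_{\infty}\times\R_{\infty}$
\[
0 \longrightarrow \Db^{\TT}_{M_{\infty}} \longrightarrow j^{-1}\Db^{\ttt,0}_{\widehat{M}\times\PP} \xrightarrow{\ \partial_{t}-1\ } j^{-1}\Db^{\ttt,0}_{\widehat{M}\times\PP} \longrightarrow 0 .
\]

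Next I would use that the two outer terms of this sequence are acyclic for $\RR f_{\R !!}$. This is the subanalytic and bordered counterpart of the classical softness of the sheaf of distributions: $\Db^{\ttt,0}$ is a sheaf of modules over the sheaf $\CC^{\infty,\ttt}$ of tempered $\CC^{\infty}$-functions, which admits subanalytic partitions of unity, and this forces acyclicity for direct images of proper type (implicit in \cite{Kash84} and \cite{Kash96}, and discussed in \cite{Prel08} and \cite{Kashi16}; alternatively, one may first reduce to the case of ordinary real analytic manifolds by unwinding the definition of $\RR h_{!!}$ for a morphism of bordered spaces through the closed graph and its proper projection to $\widehat{M}\times\PP$, the $\R_{\infty}$-factor being left fixed by $f_{\R}$). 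Granting this, the short exact sequence above shows that $\RR f_{\R !!}\Db^{\TT}_{M_{\infty}}$ is represented by the two-term complex $\bigl[\,f_{\R !!}\,j^{-1}\Db^{\ttt,0}_{\widehat{M}\times\PP} \xrightarrow{\partial_{t}-1} f_{\R !!}\,j^{-1}\Db^{\ttt,0}_{\widehat{M}\times\PP}\,\bigr]$ placed in degrees $0$ and $1$. In particular $\RR^{k}f_{\R !!}\Db^{\TT}_{M_{\infty}} = 0$ for all $k\ge 2$, and $\RR^{1}f_{\R !!}\Db^{\TT}_{M_{\infty}}$ is the cokernel of $\partial_{t}-1$ acting on $f_{\R !!}\,j^{-1}\Db^{\ttt,0}_{\widehat{M}\times\PP}$.

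It then remains to check that this cokernel vanishes, i.e.\ that $\partial_{t}-1$ stays an epimorphism once $f_{\R !!}$ has been applied; this I expect to be the one genuinely delicate point, since $f_{\R !!}$ is not right exact in general. The reason the statement should hold is that $f_{\R}=f\times\mathrm{id}_{\R}$ leaves the variable $t$ untouched, whereas the primitive operator used above acts only in $t$ and, applied to a distribution, enlarges the support only in the $t$-direction while leaving its image in the $M$-direction unchanged; it therefore preserves, besides temperedness, the properness of supports over $N\times\R$ built into the definition of $f_{\R !!}$. Running this construction over the relatively compact subanalytic opens of $N\times\R$ and passing from the resulting local solutions to an epimorphism of subanalytic sheaves by means of a subanalytic covering should give $\RR^{1}f_{\R !!}\Db^{\TT}_{M_{\infty}}=0$ and finish the proof. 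The main obstacle is thus twofold: the acyclicity input of the second step (where the partitions of unity for $\CC^{\infty,\ttt}$ enter), and the bookkeeping of supports near $\widehat{M}\setminus M$ and near the point at infinity of $\PP$ in this last step; the remainder of the argument is formal.
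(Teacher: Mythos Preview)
Your proposal is correct and follows essentially the same route as the paper: both use the short exact sequence $0\to\Db^{\TT}_{M_{\infty}}\to j^{-1}\Db^{\ttt}_{\widehat{M}\times\PP}\xrightarrow{\partial_t-1}j^{-1}\Db^{\ttt}_{\widehat{M}\times\PP}\to 0$, invoke the quasi-injectivity of $\Db^{\ttt}_{\widehat{M}\times\PP}$ to kill $\RR^{k}f_{\R!!}$ for $k\geq 1$ on the two outer terms, and then reduce to the surjectivity of $\partial_t-1$ on $f_{\R!!}(j^{-1}\Db^{\ttt}_{\widehat{M}\times\PP})$ via the observation that a solution $v$ of $(\partial_t-1)v=u$ satisfies $\supp(v)\subset p_1(\supp(u))\times\PP$, so the properness condition on supports over $N$ is preserved. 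The paper packages both the surjectivity of $\partial_t-1$ on $\Db^{\ttt}_{\widehat{M}\times\PP}$ and its support-controlled variant by citing Lemma~6.2.4 of \cite{Dagn16}, which makes the final step crisper than your hedged formulation, but the content is the same.
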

\begin{proof}
By Lemma $6.2.4$ in \cite{Dagn16}, we get a short exact sequence  
\[
0 \longrightarrow \ker(\Db^{\ttt}_{\widehat{M}\times \PP} \overset{\partial_t -1}\longrightarrow \Db^{\ttt}_{\widehat{M}\times \PP}) \longrightarrow \Db^{\ttt}_{\widehat{M}\times \PP} \overset{\partial_t -1}\longrightarrow \Db^{\ttt}_{\widehat{M}\times \PP} \longrightarrow 0.
\]
Let $j : M_{\infty} \times \R_{\infty} \to \widehat{M} \times \PP$ be the canonical inclusion of bordered spaces. Applying $j^{-1}$ to the previous sequence, we get a short exact sequence 
\[
0 \longrightarrow \Db^{\TT}_{M_{\infty}} \longrightarrow j^{-1}\Db^{\ttt}_{\widehat{M}\times \PP} \overset{\partial_t -1}\longrightarrow j^{-1}\Db^{\ttt}_{\widehat{M}\times \PP} \longrightarrow 0
\]
and thus, a long exact sequence
\begin{center}
\begin{tikzcd}[row sep=large, column sep=2.2ex]
 0 \to f_{\R}{}_{!!}\Db^{\TT}_{M_{\infty}} \rar & f_{\R}{}_{!!}(j^{-1}\Db^{\ttt}_{\widehat{M}\times \PP}) \rar
             \ar[draw=none]{d}[name=X, anchor=center]{}
    & f_{\R}{}_{!!}(j^{-1}\Db^{\ttt}_{\widehat{M}\times \PP}) \ar[rounded corners,
            to path={ -- ([xshift=2ex]\tikztostart.east)
                      |- (X.center) \tikztonodes
                      -| ([xshift=-2ex]\tikztotarget.west)
                      -- (\tikztotarget)}]{dll}[at end]{} \\      
   {\RR^1}f_{\R}{}_{!!}\Db^{\TT}_{M_{\infty}} \rar & {\RR^1}f_{\R}{}_{!!}(j^{-1}\Db^{\ttt}_{\widehat{M}\times \PP}) \rar & {\RR^1}f_{\R}{}_{!!}(j^{-1}\Db^{\ttt}_{\widehat{M}\times \PP}) \to \dots
\end{tikzcd}
\end{center}
\noindent
Since $\Db^{\ttt}_{\widehat{M}\times \PP}$ is quasi-injective, we know that ${\RR}^kf_{\R}{}_{!!}(j^{-1}\Db^{\ttt}_{\widehat{M}\times \PP}) \simeq 0$ for all $k\geq 1.$ Therefore, for all $k\geq 2$, one has ${\RR}^k f_{\R}{}_{!!}\Db^{\TT}_{M_{\infty}} \simeq 0$ and it only remains to show that ${\RR}^1f_{\R}{}_{!!}\Db^{\TT}_{M_{\infty}} \simeq 0$, that is to say, to show that 
\begin{equation}\label{equ:epi}
f_{\R}{}_{!!}(j^{-1}\Db^{\ttt}_{\widehat{M}\times \PP}) \overset{\partial_t -1}\longrightarrow  f_{\R}{}_{!!}(j^{-1}\Db^{\ttt}_{\widehat{M}\times \PP})
\end{equation} 
is an epimorphism. Let us denote by $p_1 : \widehat{M}\times \PP \to \widehat{M}$ the first projection. Then, if $u$ and $v$ are two sections of $f_{\R}{}_{!!}(j^{-1}\Db^{\ttt}_{\widehat{M}\times \PP})$ such that $\partial_t v -v = u$, it is clear that $\supp(v) \subset p_1(\supp(u)) \times \PP.$ Using again Lemma $6.2.4$ of \cite{Dagn16}, this proves that~(\ref{equ:epi}) is an epimorphism.
\end{proof}

\begin{proposition}
Let $f : X_{\infty} = (X,\widehat{X}) \to Y_{\infty} = (Y,\widehat{Y})$ be a morphism of complex bordered spaces. The integration of distributions along the fibers of $f_{\R}$ induces a morphism of double complexes
\begin{equation}\label{equ:enhintegration}
\int_{f_{\R}} : f_{\R}{}_{!!} \Db^{\TT, \bullet+d_X, \bullet+d_X}_{X_{\infty}} \rightarrow \Db^{\TT, \bullet+d_Y, \bullet+d_Y}_{Y_{\infty}}
\end{equation}
and thus, a morphism
\begin{equation}\label{equ:derenhintegration}
\int_{f_{\R}} : {\EE} f_{!!} \Omega^{\EE,p+d_X}_{X_{\infty}} [d_X] \rightarrow \Omega^{\EE,p+d_Y}_{Y_{\infty}} [d_Y]
\end{equation}
\noindent in ${\EE}^{\bb}(\C^{\Sub}_{Y_{\infty}})$ for each $p\in \Z.$ 
\end{proposition}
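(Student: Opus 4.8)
The plan is to deduce (\ref{equ:enhintegration}) and (\ref{equ:derenhintegration}) from the tempered integration morphism of Proposition \ref{prop:tempintegration}, exploiting the fact — used already in the proof of Lemma \ref{lem:acyclic} — that enhanced distributional forms are realised inside tempered ones through the equation $\partial_t u = u$. Write $j_X : X_{\infty}\times\R_{\infty}\hookrightarrow \widehat{X}\times\PP$ for the canonical inclusion, so that $\Db^{\TT,p,q}_{X_{\infty}} = j_X^{-1}\ker\bigl(\partial_t-1\colon \Db^{\ttt,p,q}_{\widehat{X}\times\PP}\to\Db^{\ttt,p,q}_{\widehat{X}\times\PP}\bigr)$, and likewise over $Y$. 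The argument has three stages: (i) produce a tempered integration morphism of double complexes over the ambient spaces $\widehat{X}\times\PP$ and $\widehat{Y}\times\PP$, compatible with restriction to relatively compact open subanalytic subsets; (ii) check that it commutes with $\partial_t-1$, so that after applying $j^{-1}$ and $f_{\R}{}_{!!}$ it restricts to the kernel subsheaves and yields (\ref{equ:enhintegration}); (iii) apply the quotient functor and Lemma \ref{lem:acyclic} to pass to (\ref{equ:derenhintegration}).

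For stage (i): the map $f_{\R}=f\times\mathrm{id}_{\R}$ has the same fibers as $f$ — of complex dimension $d_X-d_Y$, the integration being performed only in the $X$-direction while the $\R_{\infty}$-factor is carried along — so it is governed by Proposition \ref{prop:tempintegration} (equivalently, for the essential case of top forms, by the $\mathscr{D}$-module formulation of Proposition \ref{prop:tempintegration2}). Two points require care. First, $f$ need not extend to a morphism $\widehat{X}\to\widehat{Y}$; one works instead with the closure $\overline{\Gamma}_f\subset\widehat{X}\times\widehat{Y}$ of the graph, which is proper over $\widehat{X}$, exactly as in the definition of the operation $f_{\R}{}_{!!}$ for bordered spaces, so that the construction lands in $Y_{\infty}\times\R_{\infty}$. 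Second, the source and target now live on $\widehat{X}\times\PP$ and $\widehat{Y}\times\PP$, in which only the $X$- (resp.\ $Y$-) direction is complex; one therefore needs the relative form of Kashiwara's morphism — integration of tempered distributional forms along complex fibers over a base that is only partially complex (namely $\PP$ together with the directions of $\widehat{X}\setminus X$). Since this integration is local on the base and functorial under base restriction, the relative statement follows from Proposition \ref{prop:tempintegration} by the arguments of \cite{Kash96} and \cite{Kash01}, and the resulting morphism is compatible with restriction to $\text{Op}^{\Sub,c}$, hence is a morphism of (complexes of) subanalytic sheaves.

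For stages (ii) and (iii): integration along the fibers of $f_{\R}$ commutes with the vector field $\partial_t$ (transverse to those fibers and pulled back from $\widehat{Y}\times\PP$) and, being $\C$-linear, with multiplication by the constant $1$; hence $\int_{f_{\R}}\circ(\partial_t-1) = (\partial_t-1)\circ\int_{f_{\R}}$. Applying $j^{-1}$ and then $f_{\R}{}_{!!}$ to the short exact sequences $0\to\Db^{\TT,p,q}_{X_{\infty}}\to j_X^{-1}\Db^{\ttt,p,q}_{\widehat{X}\times\PP}\xrightarrow{\partial_t-1}j_X^{-1}\Db^{\ttt,p,q}_{\widehat{X}\times\PP}\to 0$ (and similarly over $Y$), and using that $j_X^{-1}\Db^{\ttt,p,q}_{\widehat{X}\times\PP}$ is $f_{\R}{}_{!!}$-acyclic by quasi-injectivity of $\Db^{\ttt,p,q}_{\widehat{X}\times\PP}$ together with ${\RR}^1 f_{\R}{}_{!!}\Db^{\TT,p,q}_{X_{\infty}}\simeq 0$ (the argument of Lemma \ref{lem:acyclic} being insensitive to the form degree), we identify $f_{\R}{}_{!!}\Db^{\TT,p,q}_{X_{\infty}}$ with the kernel of $\partial_t-1$ acting on $f_{\R}{}_{!!}j_X^{-1}\Db^{\ttt,p,q}_{\widehat{X}\times\PP}$. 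As the morphism from (i) commutes with $\partial_t-1$, it restricts to these kernels and gives the morphism of double complexes (\ref{equ:enhintegration}). Restricting (\ref{equ:enhintegration}) to holomorphic degree $p+d_X$ (resp.\ $p+d_Y$ on the target) yields a morphism of $\bar\partial$-complexes $f_{\R}{}_{!!}\Db^{\TT,p+d_X,\bullet+d_X}_{X_{\infty}}\to\Db^{\TT,p+d_Y,\bullet+d_Y}_{Y_{\infty}}$ — degree-preserving, because the normalisation by $d_X$ (resp.\ $d_Y$) absorbs the drop of the antiholomorphic degree by the fiber dimension $d_X-d_Y$. Applying $Q_{Y_{\infty}}$, invoking Lemma \ref{lem:acyclic} so that $f_{\R}{}_{!!}$ computes $\RR f_{\R}{}_{!!}$ on the $\Db^{\TT}$-sheaves and ${\EE}f_{!!}$ is its factorisation through $Q_{X_{\infty}}$, $Q_{Y_{\infty}}$, and using $Q_{X_{\infty}}(\Db^{\TT,p+d_X,\bullet+d_X}_{X_{\infty}})\simeq\Omega^{\EE,p+d_X}_{X_{\infty}}[d_X]$ (and similarly over $Y$), we obtain (\ref{equ:derenhintegration}) in ${\EE}^{\bb}(\C^{\Sub}_{Y_{\infty}})$.

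The main technical point is stage (i): constructing the tempered integration morphism over $\widehat{X}\times\PP$ and $\widehat{Y}\times\PP$ — a setting in which the base is only partially complex and $f$ does not extend to the compactifications — and checking its compatibility with the subanalytic site of the bordered space. Once this relative, bordered-space version of Proposition \ref{prop:tempintegration} is in place, the commutation with $\partial_t-1$ and the passage to the enhanced quotient are routine.
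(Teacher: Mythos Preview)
Your argument is correct and follows the same route as the paper's own proof: the morphism (\ref{equ:enhintegration}) arises from the tempered integration of Proposition~\ref{prop:tempintegration} because the product structure $f_{\R}=f\times\mathrm{id}_{\R}$ makes fiber integration commute with $\partial_t-1$, and (\ref{equ:derenhintegration}) then follows from the $f_{\R}{}_{!!}$-acyclicity of the $\Db^{\TT,p,q}$ sheaves, established by the argument of Lemma~\ref{lem:acyclic}. The paper's proof is a two-line sketch declaring the first point ``clear'' from the form of $f_{\R}$; you have unpacked it considerably, in particular flagging the graph-closure workaround when $f$ does not extend to $\widehat{X}\to\widehat{Y}$ and the relative (partially complex) nature of the base $\widehat{X}\times\PP$ --- subtleties the paper leaves implicit (and which, in the paper's actual application to the projections $p,q$ in Theorem~\ref{thm:main}, do not arise since those maps extend).
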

\begin{proof}
Due to the specific form of $f_{\R} = f \times \text{id}_{\R}$, it is clear that~(\ref{equ:enhintegration}) is well-defined. Moreover, by the same proof as the one of Lemma~\ref{lem:acyclic}, one can show that $\Db^{\TT, p+d_X, q+d_X}_{X_{\infty}}$ is $f_{\R}{}_{!!}$-acyclic for all $(p,q)\in \Z^2.$ Hence the conclusion.
\end{proof}

\begin{proposition}\label{prop:keyprop1}
Let $f : X_{\infty}=(X,\widehat{X}) \to Y_{\infty}=(Y,\widehat{Y})$ be a morphism of complex bordered spaces and let $\mathcal{N} \in \DD^{\bb}_{\qgood}(\mathscr{D}_{Y_{\infty}}).$

\begin{enumerate}[label=(\roman*)]
\item (\cite{Kash16}, Proposition $4.15$ (i)) There is a natural isomorphism
\begin{equation}\label{equ:Functorial1}
\DR^{\EE}_{X_{\infty}}({\DD}f^* \mathcal{N})[d_X] \simeq {\EE}f^!\DR^{\EE}_{Y_{\infty}}(\mathcal{N})[d_Y].
\end{equation}

\item If $f$ extends to a holomorphic map $\hat{f} : \widehat{X}\to \widehat{Y},$ then applying (\ref{equ:Functorial1}) to $\mathcal{N}=\mathscr{D}_{Y_{\infty}}$ gives an isomorphism
\begin{equation}\label{equ:modadjointenh}
\Omega^{\EE}_{X_{\infty}} \overset{\LLL}\otimes_{\mathscr{D}_{X_{\infty}}} \mathscr{D}_{X_{\infty}\to Y_{\infty}} [d_X] \xrightarrow{\sim} {\EE}f^{!} \Omega^{\EE}_{Y_{\infty}} [d_Y].
\end{equation}

\item This morphism induces, thanks to $1_{X_{\infty}\to Y_{\infty}}$, a morphism 
\[
\Omega^{\EE}_{X_{\infty}} [d_X] \to {\EE}f^{!} \Omega^{\EE}_{Y_{\infty}} [d_Y]
\]
in $\EE^{\bb}(\C^{\Sub}_{X_{\infty}})$, which is equivalent to the adjoint of (\ref{equ:derenhintegration}) when $p=0.$
\end{enumerate}
\end{proposition}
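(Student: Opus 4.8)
The strategy is to reduce everything to the tempered (non-enhanced) situation established in Section 3, by analysing the structure of the morphism $f_{\R}$ and the $\mathscr{D}$-module setup. The point of part (iii) is purely formal once parts (i) and (ii) are available: the canonical section $1_{X_\infty \to Y_\infty}$ of $\mathscr{D}_{X_\infty \to Y_\infty}$ defines a morphism $\mathscr{D}_{X_\infty} \to \mathscr{D}_{X_\infty \to Y_\infty}$ of left $\mathscr{D}_{X_\infty}$-modules, and applying $\Omega^{\EE}_{X_\infty}\overset{\LLL}\otimes_{\mathscr{D}_{X_\infty}}(-)$ together with the isomorphism (\ref{equ:modadjointenh}) yields the asserted morphism $\Omega^{\EE}_{X_\infty}[d_X] \to {\EE}f^!\Omega^{\EE}_{Y_\infty}[d_Y]$. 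So the substance of the proof is the compatibility statement: that this morphism, transported through the $({\EE}f_{!!}, {\EE}f^!)$-adjunction, agrees with the integration morphism (\ref{equ:derenhintegration}) for $p=0$.

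First I would unwind the adjunction. By definition ${\EE}f_{!!}$ and ${\EE}f^!$ are the factorisations through $Q$ of ${\RR}{f_\R}_{!!}$ and $f_\R^!$, and the adjunction $({\EE}f_{!!},{\EE}f^!)$ is inherited from $({\RR}{f_\R}_{!!}, f_\R^!)$ on $\DD^{\bb}(\C^{\Sub}_{-\times\R_\infty})$. Hence it suffices to check the compatibility at the level of the representatives in $\DD^{\bb}(\C^{\Sub}_{-\times\R_\infty})$: i.e. that the morphism obtained from $1_{X_\infty\to Y_\infty}$ and (\ref{equ:modadjointenh}), lifted to $\Db^{\TT,\bullet,\bullet}$-complexes, is the adjoint of the integration map (\ref{equ:enhintegration}). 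Next, using the quasi-good hypothesis, I would invoke part (i) of the proposition (which is \cite{Kash16}, Proposition 4.15 (i)) with $\mathcal{N} = \mathscr{D}_{Y_\infty}$ to get (\ref{equ:modadjointenh}); the content of (ii) is just the observation that ${\DD}f^*\mathscr{D}_{Y_\infty} = \mathscr{D}_{X_\infty\to Y_\infty}$ and $\DR^{\EE}_{Y_\infty}(\mathscr{D}_{Y_\infty}) = \Omega^{\EE}_{Y_\infty}$, so nothing new is needed there.

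The heart of the argument, then, is to identify the adjoint of (\ref{equ:modadjointenh}) explicitly. Here I would imitate the tempered proof recalled in Proposition~\ref{prop:tempintegration2}: on the complex manifold side, the isomorphism (\ref{equ:modadjointtemp}) has adjoint ${\RR}f_{!!}(\Omega^{\ttt}_X\overset{\LLL}\otimes_{\mathscr{D}_X}\mathscr{D}_{X\to Y})[d_X]\to\Omega^{\ttt}_Y[d_Y]$, which precomposed with $1_{X\to Y}$ gives exactly the integration morphism (\ref{equ:tempintegration}) for $p=0$. The enhanced statement should follow by the same formal manipulation, now carried out on $\widehat{X}\times\PP$ and $\widehat{Y}\times\PP$ and then restricted via $j^{-1}$ and passed through $Q$. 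Concretely: the map $f_\R$ extends (when $\hat f$ does) to $\hat f\times\mathrm{id}_\PP : \widehat{X}\times\PP\to\widehat{Y}\times\PP$; applying Proposition~\ref{prop:tempintegration2} on these compactified spaces and then taking $j^{-1}$ of the resulting morphism of $\Db^{\ttt,\bullet,\bullet}$-complexes, and using that $\Db^{\TT,\bullet,\bullet}_{-_\infty}=j^{-1}\ker(\partial_t-1)$ is compatible with integration along fibers of $f_\R=f\times\mathrm{id}_\R$ (which commutes with $\partial_t$), produces precisely (\ref{equ:enhintegration}) and its adjoint. Then the identification of (\ref{equ:modadjointenh}) with the enhanced version of (\ref{equ:modadjointtemp}) — i.e. that the abstract isomorphism of \cite{Kash16} is the one induced by integration of distributions — is what one reads off from \cite{Kash16}'s construction; I expect this to be the main obstacle, since it requires tracing that Proposition 4.15 (i) of \cite{Kash16} is built, at the Dolbeault level, out of the tempered integration morphism. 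Once that identification is in hand, precomposing with $1_{X_\infty\to Y_\infty}$ and using the acyclicity of $\Db^{\TT,p+d_X,q+d_X}_{X_\infty}$ for ${f_\R}_{!!}$ established in the previous proposition (so that ${\EE}f_{!!}\Omega^{\EE,\bullet+d_X}_{X_\infty}$ is computed by the honest complex $f_{\R!!}\Db^{\TT,\bullet+d_X,\bullet+d_X}_{X_\infty}$) gives the equivalence with (\ref{equ:derenhintegration}) at $p=0$, completing the proof.
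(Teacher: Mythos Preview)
Your proposal is correct and follows essentially the same approach as the paper: the paper's proof simply asserts that the isomorphism~(\ref{equ:modadjointenh}) is constructed as an enhancement on bordered spaces of the tempered isomorphism~(\ref{equ:modadjointtemp}), and then invokes Proposition~\ref{prop:tempintegration2} to conclude. You have unpacked in detail exactly what those two sentences mean---the passage to $\widehat{X}\times\PP$, the restriction via $j^{-1}$, the compatibility with $\partial_t-1$, and the acyclicity for ${f_\R}_{!!}$---so your write-up is a faithful elaboration of the paper's terse argument rather than a different route.
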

\begin{proof}
The isomorphism~(\ref{equ:modadjointenh}) is built as an enhancement on bordered spaces of~(\ref{equ:modadjointtemp}). The conclusion follows from Proposition~\ref{prop:tempintegration2}.
\end{proof}

\begin{proposition}
Let $f : X_{\infty} = (X,\widehat{X}) \to Y_{\infty} = (Y,\widehat{Y})$ be a morphism of complex bordered spaces such that $f$ extends to a holomorphic submersion $\hat{f} : \widehat{X} \to \widehat{Y}.$ The pullback of distributions by $f_{\R}$ induces a morphism of double complexes
\begin{equation}\label{equ:enhpullback}
f_{\R}^* : f_{\R}^{-1} \Db^{\TT, \bullet, \bullet}_{Y_{\infty}} \rightarrow \Db^{\TT, \bullet, \bullet}_{X_{\infty}}
\end{equation}
and thus, a morphism
\begin{equation}\label{equ:derenhpullback}
f_{\R}^* : {\EE}f^{-1} \Omega^{\EE, p}_{Y_{\infty}} \rightarrow \Omega^{\EE,p}_{X_{\infty}}
\end{equation}
\noindent in ${\EE}^{\bb}(\C^{\Sub}_{X_{\infty}})$ for each $p\in \Z.$ 
\end{proposition}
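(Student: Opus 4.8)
The plan is to mimic the proof of the previous proposition on enhanced integration, transferring the classical tempered pullback statement through the bordered/enhanced formalism. First I would observe that, because $f_{\R} = f \times \mathrm{id}_{\R}$ and $f$ extends to a holomorphic submersion $\hat f : \widehat X \to \widehat Y$, the map $\hat f \times \mathrm{id}_{\PP} : \widehat X \times \PP \to \widehat Y \times \PP$ is again a holomorphic submersion. Hence the classical tempered-pullback morphism of double complexes $(\hat f \times \mathrm{id}_{\PP})^{-1}\Db^{\ttt,\bullet,\bullet}_{\widehat Y \times \PP} \to \Db^{\ttt,\bullet,\bullet}_{\widehat X \times \PP}$ of the third proposition of Section~3 is available. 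I would then check that this morphism commutes with the action of $\partial_t - 1$: this is immediate since the pullback of distributions by a map of the form $g \times \mathrm{id}_{\PP}$ intertwines the $\PP$-vector field $\partial_t$ on source and target (the coordinate $t$ is pulled back to $t$). Consequently the morphism restricts to the kernels of $\partial_t - 1$, and after applying $j^{-1}$ (for the canonical inclusions $j$ of the bordered spaces into $\widehat X \times \PP$, resp. $\widehat Y \times \PP$) and using the base-change identity $j_{X}^{-1}(\hat f \times \mathrm{id}_{\PP})^{-1} \simeq f_{\R}^{-1} j_{Y}^{-1}$, one obtains the desired morphism of double complexes~(\ref{equ:enhpullback}).

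Next I would pass to the Dolbeault complexes. Since pullback of distributional forms commutes with $\bar\partial$ (another classical fact, valid fiberwise for a submersion), the morphism~(\ref{equ:enhpullback}) is a morphism of the relevant double complexes in each bidegree, so it induces a morphism of the associated simple complexes $f_{\R}^{-1}(0 \to \Db^{\TT,p,0}_{Y_{\infty}} \to \cdots) \to (0 \to \Db^{\TT,p,0}_{X_{\infty}} \to \cdots)$. Applying the quotient functor $Q_{X_{\infty}}$ and using that $f_{\R}^{-1}$ descends to $\EE f^{-1}$ on the quotient categories (as recalled in paragraph~2.6), together with the fact that $f_{\R}^{-1}$ is exact so it commutes with taking the Dolbeault complex, yields the morphism~(\ref{equ:derenhpullback}) in $\EE^{\bb}(\C^{\Sub}_{X_{\infty}})$. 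One should note that, unlike the integration case, no acyclicity lemma is needed here: $f_{\R}^{-1}$ is exact, hence already derived, so the passage from the sheaf-level morphism to the derived/enhanced morphism is formal.

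The only real point requiring care — and the step I expect to be the main obstacle — is verifying that the tempered pullback $f_{\R}^*$ is compatible with the bordered-space structure, i.e. that it is well-defined on the subanalytic sheaves $\Db^{\TT,\bullet,\bullet}$ over the bordered spaces rather than merely on their ambient compactifications. Concretely, one must check that a distributional form on $\widehat Y \times \PP$ which restricts to a given section of $\Db^{\TT,\bullet,\bullet}_{Y_{\infty}}$ over $M\times\R$ pulls back, via $\hat f \times \mathrm{id}_{\PP}$, to something whose restriction to $X \times \R$ lands in $\Db^{\TT,\bullet,\bullet}_{X_{\infty}}$; this reduces to the properness hypothesis built into the notion of morphism of bordered spaces (so that $\overline{\Gamma}_f \to \widehat X$ is proper), which guarantees that tempered growth near the boundary is preserved under pullback. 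Once this compatibility is established, everything else is a routine transport of the classical statement, exactly parallel to the proof of~(\ref{equ:enhintegration})–(\ref{equ:derenhintegration}), so I would simply write \og{}by the same argument as in the proof of Proposition on enhanced integration, replacing $\int_{f_{\R}}$ by the pullback morphism of double complexes and using that $f_{\R}^{-1}$ is exact\fg{}.
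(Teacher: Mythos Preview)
Your proposal is correct and follows exactly the paper's approach: the paper's proof is two sentences, one saying the first morphism is well-defined ``thanks to the specific form of $f_{\R}$'' and the other invoking exactness of $f_{\R}^{-1}$, and you have simply unpacked both. Your ``main obstacle'' is less of an obstacle than you suggest: since by hypothesis $f$ extends to a holomorphic submersion $\hat f : \widehat X \to \widehat Y$, the tempered pullback is already defined on the full compactifications $\widehat X \times \PP$ and $\widehat Y \times \PP$, so no separate properness argument for the bordered structure is needed --- the base-change identity you wrote handles everything.
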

\begin{proof}
The first morphism is again well-defined thanks to the specific form of $f_{\R}.$ The second one is obtained by the exactness of $f_{\R}^{-1}.$
\end{proof}

\begin{proposition}\label{prop:keyprop2}
Let $f : X_{\infty}=(X,\widehat{X}) \to Y_{\infty}=(Y,\widehat{Y})$ be a semi-proper morphism of complex bordered spaces and let $\mathcal{M} \in \DD^{\bb}_{\qgood}(\mathscr{D}_{X_{\infty}}).$ 
\begin{enumerate}[label=(\roman*)]
\item (\cite{Kash16}, Proposition $4.15$ (ii)) There is a natural isomorphism
\begin{equation} \label{equ:Functorial2}
\DR^{\EE}_{Y_{\infty}}({\DD}f_* \mathcal{M}) \simeq {\EE}f_*\DR^{\EE}_{X_{\infty}}(\mathcal{M}).
\end{equation}
\item (\cite{Kash01}, Lemma $7.4.10$) If $f$ extends to a holomorphic map $\hat{f} : \widehat{X}\to \widehat{Y}$, then (\ref{equ:Functorial2}) is induced by a morphism

\begin{equation}\label{equ:modenhpullback}
\mathscr{D}_{X_{\infty}\to Y_{\infty}} \overset{\LLL}\otimes_{f^{-1}\mathscr{D}_{Y_{\infty}}} {\EE}f^{-1}\Hol^{\EE}_{Y_{\infty}} \to \Hol^{\EE}_{X_{\infty}},
\end{equation}
which is an enhancement of~(\ref{equ:modpullback}). 
\item The morphism~(\ref{equ:modenhpullback}) induces, thanks to $1_{X_{\infty}\to Y_{\infty}}$, a morphism
\[
{\EE}f^{-1} \Hol^{\EE}_{Y_{\infty}} \rightarrow \Hol^{\EE}_{X_{\infty}}
\]
which is equivalent to~(\ref{equ:derenhpullback}) when $p=0,$ if $\hat{f}$ is a holomorphic submersion. 
\end{enumerate}
\end{proposition}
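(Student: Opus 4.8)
The plan is to take parts (i) and (ii) as given — they are the quoted statements of \cite{Kash16} and \cite{Kash01} — and to concentrate on part (iii), which asserts that precomposing the transfer‑bimodule morphism (\ref{equ:modenhpullback}) with the canonical section $1_{X_{\infty}\to Y_{\infty}}$ reproduces, in degree $p=0$, the $\bar\partial$‑compatible enhanced pullback (\ref{equ:derenhpullback}). Exactly as in the proof of Proposition~\ref{prop:keyprop1}, I would deduce this from the tempered statement recalled at the end of Section~3 (the morphism (\ref{equ:modpullback}) composed with $1_{X\to Y}$ equals (\ref{equ:temppullback}) for $p=0$ when the map is submersive), by observing that both sides of the desired equality are obtained from their tempered avatars by one and the same \emph{enhancement on bordered spaces} procedure.

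Concretely, I would first recall that $\Db^{\TT,p,q}_{M_{\infty}}$ is by definition $j^{-1}$ of the kernel of $\partial_t-1$ acting on $\Db^{\ttt,p,q}_{\widehat M\times\PP}$, and that $f_{\R}=f\times\mathrm{id}_{\R}$ extends to $\hat f\times\mathrm{id}_{\PP}:\widehat X\times\PP\to\widehat Y\times\PP$; since $\partial_t$ differentiates only in the $\PP$‑direction, which is left fixed by this map, the classical pullback morphism of double complexes underlying (\ref{equ:temppullback}), read on $\widehat X\times\PP\to\widehat Y\times\PP$, commutes with $\partial_t-1$. Applying $j^{-1}$, then $Q_{X_{\infty}}$, and finally forming the Dolbeault complex with $p=0$ yields precisely (\ref{equ:derenhpullback}). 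The same remarks apply on the $\mathscr{D}$‑module side: the transfer‑bimodule morphism (\ref{equ:modpullback}) for $\widehat X\times\PP\to\widehat Y\times\PP$ commutes with $\partial_t-1$, and by part (ii) (i.e.\ \cite{Kash01}, Lemma $7.4.10$) its image under this enhancement procedure is (\ref{equ:modenhpullback}); likewise the canonical section $1_{X_{\infty}\to Y_{\infty}}$ of $\mathscr{D}_{X_{\infty}\to Y_{\infty}}$ is the image of the canonical section of the transfer bimodule of $\hat f\times\mathrm{id}_{\PP}$.

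It then remains to observe that forming the composite with $1$ commutes with $j^{-1}$, with passage to the $\partial_t-1$‑kernel, and with $Q$; hence the enhanced morphism ${\EE}f^{-1}\Hol^{\EE}_{Y_{\infty}}\to\Hol^{\EE}_{X_{\infty}}$ built from (\ref{equ:modenhpullback}) is the $Q$‑image of the tempered morphism $f^{-1}\Hol^{\ttt}_{Y}\to\Hol^{\ttt}_{X}$ of Section~3, which (when $\hat f$ is a holomorphic submersion) coincides there with $f^{*}$, i.e.\ with (\ref{equ:temppullback}) in degree $0$. Applying $Q$ and assembling the Dolbeault complexes gives the asserted equivalence with (\ref{equ:derenhpullback}). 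The semi‑properness of $f$ is used only to make sense of (\ref{equ:Functorial2}) and of ${\EE}f_*$ in part (i), and the submersivity of $\hat f$ only in this last comparison, exactly as in the tempered picture.

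The step I expect to be the main obstacle is verifying that the enhancement of (\ref{equ:modpullback}) constructed in \cite{Kash01} and \cite{Kash16} genuinely \emph{is} the naive ``restrict along $j$, take the $\partial_t-1$‑kernel, apply $Q$'' recipe applied to the $\times\PP$ picture — in other words, tracing through the constructions of loc.\ cit.\ to check that no auxiliary identification spoils compatibility with $1_{X_{\infty}\to Y_{\infty}}$ and with $\bar\partial$. Once that bookkeeping is settled, the remainder is the same diagram chase as in the proof of Proposition~\ref{prop:keyprop1}, now with pullbacks in place of integrations.
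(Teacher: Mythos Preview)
Your proposal is correct and in fact more detailed than what the paper provides: the paper gives no proof for this proposition at all, stating (i) and (ii) as citations and leaving (iii) implicit by analogy with the two-line proof of Proposition~\ref{prop:keyprop1}. Your argument spells out precisely that analogy --- reducing to the tempered statement of Section~3 via the ``$\times\PP$, kernel of $\partial_t-1$, restrict along $j$, apply $Q$'' recipe --- which is exactly the intended mechanism, and your caveat about tracing compatibility through the constructions of \cite{Kash01} and \cite{Kash16} is the honest content of the word ``enhancement'' that the paper leaves to the reader.
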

\noindent
Let us now introduce a third operation.

\begin{proposition}\label{prop:mult}
Let $X_{\infty} = (X,\widehat{X})$ be a complex bordered space and $\varphi : X \to \C$ a tempered function at infinity, i.e. $\varphi \in \Gamma(X, \CC_{\widehat{X}}^{\infty, \ttt}).$ Then, there is a morphism 

\begin{equation}\label{equ:basicmult}
{\mu_{-\Re \varphi}}_* \Db^{\TT,p,q}_{X_{\infty}} \to \Db^{\TT,p,q}_{X_{\infty}}
\end{equation}
defined by $\omega \mapsto e^{\varphi}\omega$ for any $(p,q) \in \Z.$ If moreover $\varphi$ is holomorphic, this gives rise to a morphism of complexes 

\begin{equation}\label{equ:basicmult2}
{\mu_{-\Re \varphi}}_* \Db^{\TT,p,\bullet}_{X_{\infty}} \to \Db^{\TT,p,\bullet}_{X_{\infty}}
\end{equation}
for each $p\in \Z.$ This morphism induces itself a morphism
\begin{equation}\label{equ:dermult}
\C_{\{t=-\Re\varphi(x)\}} \overset{+}\otimes\: \Omega^{\EE,p}_{X_{\infty}} \to \Omega^{\EE,p}_{X_{\infty}}
\end{equation}
in $\EE^{\bb}(\C^{\Sub}_{X_{\infty}})$ for each $p\in \Z.$
\end{proposition}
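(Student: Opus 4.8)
The plan is to build (\ref{equ:basicmult}) by hand on the level of enhanced distributional forms, then check that it is compatible with $\bar\partial$ and with the quotient functor $Q_{X_\infty}$. First I would unwind the definition of $\Db^{\TT,p,q}_{X_\infty}$: a section over $U \in \Op^{\Sub,c}_{X_\infty \times \R_\infty}$ is (the restriction of) a tempered distributional form $\theta$ on $\widehat X \times \PP$ killed by $\partial_t - 1$, which forces $\theta$ to be ``of the form $e^t \otimes (\text{something in } t)$'' in the relevant sense. The map $\mu_{-\Re\varphi}$ shifts the $t$-coordinate by $-\Re\varphi(x)$, so ${\mu_{-\Re\varphi}}_*\theta$ is the distributional form $(x,t)\mapsto \theta(x, t+\Re\varphi(x))$. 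Because $\varphi \in \Gamma(X, \CC^{\infty,\ttt}_{\widehat X})$ has, together with all derivatives, polynomial growth at the boundary of $\widehat X$, multiplication by $e^{\varphi}$ (and by $e^{-\Re\varphi}$, and the composition with the shift) preserves the temperedness condition: the key point is that $\CC^{\infty,\ttt}_{\widehat X}$ is a subring stable under the operations involved, and that $\Db^{\ttt,p,q}$ is a module over $\CC^{\infty,\ttt}$. I would therefore \emph{define} (\ref{equ:basicmult}) as $\theta \mapsto e^{\varphi}\,\big({\mu_{-\Re\varphi}}_*\theta\big)$, i.e.\ on a test form, $\theta(x,t+\Re\varphi(x))$ gets multiplied by $e^{\varphi(x)}$; the content of the claim is that this lands in $\Db^{\TT,p,q}_{X_\infty}$ again, which reduces to: $(\partial_t-1)$ still annihilates it (immediate, since the shift commutes with $\partial_t-1$ up to the scalar factor, and $e^{\varphi}$ is $t$-independent — one must be slightly careful that the shift turns $\partial_t - 1$ into itself, so $e^{\varphi}\cdot(\text{shift})$ of a $(\partial_t-1)$-flat section is $e^{\varphi}$ times a $(\partial_t-1)$-flat section, hence $(\partial_t-1)$-flat), and the temperedness survives (the growth estimate above).

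Next, for (\ref{equ:basicmult2}), I would observe that when $\varphi$ is holomorphic, $\bar\partial\varphi = 0$, so the operator $\omega \mapsto e^{\varphi}\omega$ commutes with $\bar\partial$: one has $\bar\partial(e^\varphi\omega) = e^\varphi\bar\partial\omega + (\bar\partial e^\varphi)\wedge\omega = e^\varphi\bar\partial\omega$. Combined with the fact that the $t$-shift $\mu_{-\Re\varphi}$ is a homeomorphism and commutes with $\bar\partial$ (it only touches the $t$-variable), this shows (\ref{equ:basicmult}) assembles into a morphism of the Dolbeault complexes ${\mu_{-\Re\varphi}}_*\Db^{\TT,p,\bullet}_{X_\infty} \to \Db^{\TT,p,\bullet}_{X_\infty}$ for each fixed $p$. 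Here I should also note ${\mu_{-\Re\varphi}}_*$ is exact (it is pushforward along a homeomorphism) so it commutes with forming the Dolbeault complex in the derived category; no $R{\mu_{-\Re\varphi}}_*$ subtleties arise.

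Finally, for (\ref{equ:dermult}), I would pass to $\EE^{\bb}(\C^{\Sub}_{X_\infty})$. By 2.5, $\C_{\{t=-\Re\varphi(x)\}} \overset{+}\otimes F \simeq R{\mu_{-\Re\varphi}}_* F$ (and $= {\mu_{-\Re\varphi}}_* F$ by exactness), so applying $Q_{X_\infty}$ to (\ref{equ:basicmult2}) and using that $Q_{X_\infty}$ commutes with $\overset{+}\otimes$ and with $\bar\partial$-complexes turns it into $\C_{\{t=-\Re\varphi(x)\}}\overset{+}\otimes \Omega^{\EE,p}_{X_\infty} \to \Omega^{\EE,p}_{X_\infty}$, as desired. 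The main obstacle, and the only place where real work is needed, is the temperedness check in the first step: one must verify that post-composition with the $t$-translation by $-\Re\varphi(x)$ does not destroy the polynomial-growth condition near $\partial\widehat X$ (and near $\PP\setminus\R$), which uses that $\varphi$ and hence $\Re\varphi$ is tempered at infinity in the sense of $\CC^{\infty,\ttt}$, together with the explicit form of $\mu$ — this is where the hypothesis $\varphi \in \Gamma(X,\CC^{\infty,\ttt}_{\widehat X})$ is genuinely consumed. Everything else ($\bar\partial$-compatibility, exactness of the shift, passage through $Q_{X_\infty}$) is formal.
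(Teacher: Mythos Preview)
There is a genuine gap in the temperedness step, and it feeds into a mis-definition of the map. You assert that multiplication by $e^{\varphi}$ preserves temperedness because ``$\CC^{\infty,\ttt}_{\widehat X}$ is a subring stable under the operations involved, and $\Db^{\ttt,p,q}$ is a module over $\CC^{\infty,\ttt}$''. But $e^{\varphi}$ is \emph{not} a section of $\CC^{\infty,\ttt}_{\widehat X}$ in general: if $\varphi$ has polynomial growth toward $\partial\widehat X$, then $e^{\varphi}$ typically has exponential growth, so multiplying a tempered distributional form by $e^{\varphi}$ will throw you out of $\Db^{\ttt}$. The hypothesis $\varphi\in\Gamma(X,\CC^{\infty,\ttt}_{\widehat X})$ gives you temperedness of $\varphi$, not of $e^{\varphi}$.

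The paper's proof avoids this with a decomposition you did not use. One first performs only the pullback $\omega(x,t)\mapsto\omega(x,t+\Re\varphi(x))$; this is well-defined on tempered forms precisely because $\Re\varphi$ is $\CC^{\infty,\ttt}$ (it is a change of variable by a tempered function, not a multiplication). Next, using $(\partial_t-1)\omega=0$ one writes $\omega(x,t)=e^{t}\rho(x)$ and observes that the shifted form equals $e^{\Re\varphi(x)}\omega(x,t)$: the shift already \emph{is} the multiplication by $e^{\Re\varphi}$. It then remains only to multiply by $e^{i\Im\varphi}$, which is harmless since $|e^{i\Im\varphi}|=1$. The composite is $\omega\mapsto e^{\varphi}\omega$, as announced. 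Your recipe ``shift, then multiply by $e^{\varphi}$'' would instead yield $\omega\mapsto e^{\varphi+\Re\varphi}\omega$, which is both the wrong map and not visibly tempered. Once (\ref{equ:basicmult}) is built this way, your arguments for (\ref{equ:basicmult2}) via $\bar\partial(e^{\varphi}\omega)=e^{\varphi}\bar\partial\omega$ and for (\ref{equ:dermult}) via exactness of ${\mu_{-\Re\varphi}}_*$ are correct and coincide with the paper's.
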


\begin{proof}
Let us consider $U \in \text{Op}^{\Sub,c}_{X_{\infty}\times \R_{\infty}}.$ Then, for any $(p,q) \in \Z^2$, we define the map 
\[
\Gamma(U, {\mu_{-\Re \varphi}}_* \Db^{\TT,p,q}_{X_{\infty}}) = \Gamma(\mu^{-1}_{-\Re \varphi}(U), \Db^{\TT,p,q}_{X_{\infty}}) \to \Gamma(U, \Db^{\TT,p,q}_{X_{\infty}})
\]
by $\omega(x,t) \mapsto \omega(x,t+\Re \varphi(x)).$ (This little abuse of notation corresponds to the pullback of $\omega$ by $\mu_{\Re \varphi}.$) Since $\varphi$ is tempered, this map is well-defined. Moreover, since $\omega$ is a solution of $\partial_t \omega =\omega$, one can write $\omega(x,t)=e^{t}\rho(x)$ for a unique distributional form $\rho$. Hence 
\[
\omega(x,t+\Re \varphi(x)) = e^{t+\Re \varphi(x)}\rho(x) = e^{\Re \varphi(x)}\omega(x,t).
\] 
To obtain (\ref{equ:basicmult}), it is now enough to compose this map with 
\[
\Gamma(U, \Db^{\TT,(p,q)}_{X_{\infty}}) \ni \omega \mapsto e^{i\Im \varphi} \omega \in \Gamma(U, \Db^{\TT,(p,q)}_{X_{\infty}}),
\] 
which is of course well-defined since $|e^{i \Im \varphi}| = 1.$ Then, (\ref{equ:basicmult2}) follows from the equality $\overline{\partial}(e^{\varphi}\omega) = e^{\varphi} \overline{\partial}\omega$ if $\varphi$ is holomorphic and (\ref{equ:dermult}) from the exactness of ${\mu_{-\Re \varphi}}_*$.
\end{proof}

\begin{proposition}\label{prop:keyprop3}
Let $X_{\infty}=(X,\widehat{X})$ be a complex bordered space and let $\LL \in \DD^{\bb}_{\hol}(\mathscr{D}_{X_{\infty}})$ and $\mathcal{M} \in \DD^{\bb}_{\qgood}(\mathscr{D}_{X_{\infty}}).$

\begin{enumerate}[label=(\roman*)]
\item (\cite{Kash16}, Proposition $4.15$ (iii))There is a natural isomorphism
\begin{equation} \label{equ:Functorial3}
\DR^{\EE}_{X_{\infty}}(\LL \overset{\DD}\otimes \mathcal{M}) \simeq {\RR}\mathscr{I}\!hom^+(\Sol^{\,\EE}_{X_{\infty}}(\LL)) , \DR^{\EE}_{X_{\infty}}(\mathcal{M})).
\end{equation}
\item Let $\varphi \in \Hol_{\widehat{X}}(* \widehat{X}\backslash X)$. Then, ~(\ref{equ:Functorial3}) applied to $\mathcal{M}=\mathscr{D}_{X_{\infty}}\otimes_{\Hol_{X_{\infty}}}\Omega^{\otimes-1}_{X_{\infty}}$ and $\LL = \mathscr{E}^{\varphi}_{X | \widehat{X}}$ gives an adjoint morphism 
\begin{equation}\label{equ:modmult}
\C_{\{t=-\Re \varphi(x)\}}\overset{+}\otimes \left(\mathscr{E}^{\varphi}_{X | \widehat{X}} \overset{\DD}\otimes \Hol^{\EE}_{X_{\infty}}\right) \to \Hol^{\EE}_{X_{\infty}},
\end{equation}
that induces, thanks to the canonical section $e^{\varphi}$ of $\mathscr{E}^{\varphi}_{X | \widehat{X}}$, a morphism 
\[
\C_{\{t=-\Re\varphi(x)\}} \overset{+}\otimes\: \Hol^{\EE}_{X_{\infty}} \to \Hol^{\EE}_{X_{\infty}},
\]
which is equivalent to~(\ref{equ:dermult}) when $p=0.$
\end{enumerate}
\end{proposition}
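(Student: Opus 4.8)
The plan is to prove Proposition~\ref{prop:keyprop3} by following the same strategy used for Propositions~\ref{prop:keyprop1} and~\ref{prop:keyprop2}: part~(i) is quoted directly from \cite{Kash16}, so the work is in part~(ii), which must identify the abstract functorial isomorphism with the concrete exponential-multiplication morphism~(\ref{equ:dermult}) from Proposition~\ref{prop:mult}. First I would unwind the right-hand side of~(\ref{equ:Functorial3}) for the specified $\mathcal{M}$ and $\LL$. Since $\mathcal{M}=\mathscr{D}_{X_{\infty}}\otimes_{\Hol_{X_{\infty}}}\Omega^{\otimes-1}_{X_{\infty}}$, one has $\DR^{\EE}_{X_{\infty}}(\mathcal{M})\simeq\Hol^{\EE}_{X_{\infty}}$, and since $\LL=\mathscr{E}^{\varphi}_{X|\widehat{X}}$ is holonomic with $\Sol^{\EE}_{X_{\infty}}(\LL)$ computed (via the irregular Riemann--Hilbert correspondence, cf.\ \cite{Dagn16}) to be the exponential enhanced sheaf $\C^{\EE}_{X_{\infty}}\otimes\C_{\{t=-\Re\varphi(x)\}}$ up to the standard normalization, the hom-functor $\RR\mathscr{I}\!hom^+$ against this object is, by the adjunction recalled in paragraph~2.5 and the identity $\C_{\{t=\psi(x)\}}\overset{+}\otimes F\simeq R\mu_{\psi *}F$, exactly convolution by $\C_{\{t=-\Re\varphi(x)\}}$. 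This yields the adjoint morphism~(\ref{equ:modmult}) after also unwinding the left-hand side, where $\LL\overset{\DD}\otimes\mathcal{M}\simeq\mathscr{E}^{\varphi}_{X|\widehat{X}}\overset{\DD}\otimes(\mathscr{D}_{X_{\infty}}\otimes\Omega^{\otimes-1})$ has enhanced de Rham complex $\mathscr{E}^{\varphi}_{X|\widehat{X}}\overset{\DD}\otimes\Hol^{\EE}_{X_{\infty}}$.

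Next I would specialize along the canonical section $e^{\varphi}$ of $\mathscr{E}^{\varphi}_{X|\widehat{X}}=\mathscr{D}_{X}e^{\varphi}\overset{\DD}\otimes\Hol_X(*Y)$, which furnishes a $\mathscr{D}_{X_{\infty}}$-linear map $\mathscr{D}_{X_{\infty}}\otimes\Omega^{\otimes-1}\to\LL\overset{\DD}\otimes(\mathscr{D}_{X_{\infty}}\otimes\Omega^{\otimes-1})$ and hence a morphism $\Hol^{\EE}_{X_{\infty}}\to\mathscr{E}^{\varphi}_{X|\widehat{X}}\overset{\DD}\otimes\Hol^{\EE}_{X_{\infty}}$ of enhanced de Rham complexes; composing with~(\ref{equ:modmult}) after convolving with $\C_{\{t=-\Re\varphi(x)\}}$ gives $\C_{\{t=-\Re\varphi(x)\}}\overset{+}\otimes\Hol^{\EE}_{X_{\infty}}\to\Hol^{\EE}_{X_{\infty}}$. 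The final step is to check that this agrees with~(\ref{equ:dermult}) at $p=0$, i.e.\ with the morphism induced by $\omega\mapsto e^{\varphi}\omega$ on $\Db^{\TT,0,\bullet}_{X_{\infty}}$. Here I would pass to the Dolbeault resolution, represent the de Rham complexes by the enhanced distributional forms $\Db^{\TT,0,\bullet}_{X_{\infty}}$, and observe that under the explicit model the action of the canonical section $e^{\varphi}$ followed by the integration/trace pairing in~(\ref{equ:modmult}) is precisely multiplication by $e^{\varphi}$, using the formula $\omega(x,t+\Re\varphi(x))=e^{\Re\varphi(x)}\omega(x,t)$ from the proof of Proposition~\ref{prop:mult} to handle the $t$-shift. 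The compatibility is then a matter of matching two explicit formulas, exactly as in the analogous verification in \cite{Kash01}, Lemma~$7.4.10$, which is why I would cite that lemma for the underlying tempered statement and present the enhanced case as its bordered-space enhancement.

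I expect the main obstacle to be the bookkeeping in the identification of $\Sol^{\EE}_{X_{\infty}}(\mathscr{E}^{\varphi}_{X|\widehat{X}})$ with the exponential enhanced sheaf together with the correct sign and twist conventions: one must be careful that the real-part function $-\Re\varphi$ appearing in $\C_{\{t=-\Re\varphi(x)\}}$ matches the convention in paragraph~2.7 for $\Db^{\TT}$ (built from $\ker(\partial_t-1)$) and that the holomorphic-versus-smooth distinction ($\varphi\in\Hol_{\widehat{X}}(*\widehat{X}\backslash X)$ here versus $\varphi\in\CC^{\infty,\ttt}_{\widehat{X}}$ in Proposition~\ref{prop:mult}) is reconciled by the $\overline{\partial}$-compatibility $\overline{\partial}(e^{\varphi}\omega)=e^{\varphi}\overline{\partial}\omega$. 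Once these conventions are pinned down, the remaining verifications are routine diagram chases, and the statement follows by reducing everything, via Dolbeault resolutions, to the already-cited tempered results of \cite{Kash01}.
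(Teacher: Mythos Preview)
Your overall strategy is correct and matches the paper's approach in spirit: part~(i) is a citation, and part~(ii) amounts to identifying the abstract morphism from~(\ref{equ:Functorial3}) with the explicit multiplication-by-$e^{\varphi}$ of Proposition~\ref{prop:mult}. The paper, however, does not carry out the unwinding you sketch; its proof is a two-line pointer: the construction of~(\ref{equ:modmult}) is already made in \cite{Kash16}, Theorem~4.5~(f)-(1), and that construction in turn rests on Lemma~9.6.3 and Proposition~9.6.5 of \cite{Dagn16}, which are precisely the results computing $\Sol^{\EE}$ of an exponential $\mathscr{D}$-module and identifying the $\mathscr{I}\!hom^{+}$ against it with a translation in~$t$. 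So what you propose to verify by hand is exactly what those cited results establish, and once you invoke them the remaining check is immediate.

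One concrete correction: your closing citation of \cite{Kash01}, Lemma~7.4.10, is misplaced. That lemma concerns the pullback morphism and is the tempered ingredient behind Proposition~\ref{prop:keyprop2}, not the present multiplication statement. There is no single ``tempered'' lemma in \cite{Kash01} playing the analogous role here; the relevant input is genuinely the enhanced computation in \cite{Dagn16}, \S9.6. Also, be careful with your identification of $\Sol^{\EE}_{X_{\infty}}(\mathscr{E}^{\varphi}_{X|\widehat{X}})$: in the conventions of \cite{Dagn16} it is an object of the form $\C_{\{t\geq -\Re\varphi(x)\}}$ (stabilized), not literally $\C_{\{t=-\Re\varphi(x)\}}$; the latter appears only after passing to the adjoint convolution, which is what produces~(\ref{equ:modmult}). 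Your anticipated sign/convention bookkeeping is indeed the only subtle point, and the cited results of \cite{Dagn16} handle it.
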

\begin{proof}
The explicit construction of~(\ref{equ:modmult}) is made in \cite{Kash16}, Theorem $4.5$ (f)-(1), by using Lemma $9.6.3$ and Proposition $9.6.5$ of \cite{Dagn16}. The reader shall immediately see that these results prove (ii).
\end{proof}

\section{A remark on the enhanced Laplace transform}
Let us recall some facts about the enhanced Fourier-Sato functors, introduced in \cite{Kash16}. Let us fix $\V$ a $n$-dimensional complex vector space and $\V^*$ its complex dual. We consider the bordered spaces $\V_{\infty} = (\V,\overline{\V})$ and $\V^*_{\infty} = (\V^*,\overline{\V}^*)$ where $\overline{\V}$ (resp. $\overline{\V}^*$) is the projective compactification of $\V$ (resp $\V^*$). Let us also note $\langle -,- \rangle : \V \times \V^* \to \C$ the duality bracket.

\begin{definition}
The Laplace kernels are defined by 
\begin{align*}
L_{\V} &= \C_{\{t = \Re \langle z,w \rangle\}} \in \EE^{\bb}(\C^{\Sub}_{\V_{\infty}\times \V^*_{\infty}}), \\
L^a_{\V} &= \C_{\{t = -\Re \langle z,w \rangle\}} \in \EE^{\bb}(\C^{\Sub}_{\V_{\infty}\times \V^*_{\infty}}).
\end{align*}

\end{definition}
\noindent
Let us consider the correspondence 
\[
\V_{\infty} \overset{p}\longleftarrow\V_{\infty} \times\V_{\infty}^* \overset{q}\longrightarrow \V_{\infty}^*
\]
\noindent
where $p$ and $q$ are the canonical projections.

\begin{definition}
The enhanced Fourier-Sato functors $${}^{\EE}\! \mathcal{F}_{\V}, {}^{\EE}\! \mathcal{F}^a_{\V} : \EE^{\bb}(\C^{\Sub}_{\V_{\infty}}) \to \EE^{\bb}(\C^{\Sub}_{\V^*_{\infty}})$$ are defined by 
\begin{align*}
{}^{\EE}\! \mathcal{F}_{\V}(F) & = {\EE}q_{!!}(L_{\V} \overset{+}\otimes {\EE}p^{-1} F),\\
{}^{\EE}\! \mathcal{F}^a_{\V}(F) & = {\EE}q_{!!}(L^a_{\V} \overset{+}\otimes {\EE}p^{-1} F).
\end{align*}
\end{definition}

\begin{remark}
In \cite{Kash16}, the authors mainly work with ${}^{\EE}\!\mathcal{F}_{\V}$. However, it will be more convenient for us to use ${}^{\EE}\!\mathcal{F}^a_{\V}$ instead.
\end{remark}

\begin{theorem}[\cite{Kash16}, Theorem 5.2.]\label{thm:equivalence}
The enhanced Fourier-Sato functor ${}^{\EE}\! \mathcal{F}^a_{\V}$ is an equivalence of categories whose inverse is given by ${}^{\EE}\! \mathcal{F}_{\V^*}[2n].$ In particular, one has an isomorphism 
\begin{equation}\label{equ:equivalence}
\RHom {}^{\EE}(F_1,F_2) \simeq \RHom {}^{\EE}({}^{\EE}\! \mathcal{F}^a_{\V}(F_1),{}^{\EE}\! \mathcal{F}^a_{\V}(F_2)),
\end{equation}
\noindent
functorial in $F_1,F_2 \in \EE^{\bb}(\C^{\Sub}_{\V_{\infty}}).$
\end{theorem}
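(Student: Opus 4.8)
The plan is to deduce Theorem~\ref{thm:equivalence} from the general machinery of enhanced subanalytic sheaves on bordered spaces. The statement is really two assertions: first, that ${}^{\EE}\!\mathcal{F}^a_{\V}$ is an equivalence with the asserted quasi-inverse, and second, that~(\ref{equ:equivalence}) follows formally from this. The second assertion is immediate: once ${}^{\EE}\!\mathcal{F}^a_{\V}$ is known to be an equivalence of triangulated categories, it induces isomorphisms on all $\Hom$-groups, hence on $\RHom^{\EE}$ after applying ${\RR}\Gamma(\V^*,\rho^{-1}(-))$; functoriality in $F_1,F_2$ is built into the construction. So the work is entirely in the first assertion.

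For the equivalence, I would follow the classical Fourier-Sato strategy adapted to the enhanced/bordered setting. First I would set up the double correspondence $\V_\infty \xleftarrow{p_{12}} \V_\infty\times\V^*_\infty\times\V_\infty \xrightarrow{p_{13}}\cdots$ and reduce the composition ${}^{\EE}\!\mathcal{F}_{\V^*}[2n]\circ{}^{\EE}\!\mathcal{F}^a_{\V}$ to a single kernel on $\V_\infty\times\V_\infty$ by the standard kernel-composition formula for the six operations (projection formula, base change, Künneth). The composed kernel is obtained by integrating $L^a_\V \overset{+}\otimes L_{\V^*}$ — i.e.\ $\C_{\{t=-\Re\langle z,w\rangle\}}\overset{+}\otimes\C_{\{t=\Re\langle z',w\rangle\}}$ — along the fibers of the projection forgetting the $\V^*$-variable $w$. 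The key computation is that this convolution equals $\C_{\{t=\Re\langle z'-z,w\rangle\}}$, and pushing forward along ${\EE}q_{!!}$ over the $w$-variable produces, up to the shift $[2n]$, the constant enhanced sheaf supported on the diagonal $\{z=z'\}$, i.e.\ the identity kernel. This is where I expect the main obstacle: controlling the enhanced direct image of an exponential kernel along a linear fibration requires the stationary-phase/integration estimates for enhanced ind-sheaves (the analogue of the fact that the Fourier transform of $1$ is a delta function), and one must be careful that the relevant morphism is semi-proper so that ${\EE}q_{!!}$ behaves well, and that the bordered-space compactifications $\overline{\V},\overline{\V}^*$ do not introduce spurious contributions at infinity.

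Concretely, the key steps in order: (1) express the composition of the two Fourier-Sato functors as a kernel composition, using that $p,q$ are the projections and invoking the associativity/base-change formalism for ${\EE}f_{!!},{\EE}f^{-1},\overset{+}\otimes$ on bordered spaces from \cite{Dagn16} and \cite{Kash16}; (2) compute the convolution $L^a_\V\overset{+}\otimes L_{\V^*}$ of the two exponential kernels, obtaining $\C_{\{t=\Re\langle z'-z,w\rangle\}}$ on $\V_\infty\times\V^*_\infty\times\V_\infty$ (here one uses $\C_{\{t=\varphi\}}\overset{+}\otimes\C_{\{t=\psi\}}\simeq\C_{\{t=\varphi+\psi\}}$, a consequence of the $\mu_\varphi$-description in 2.5); (3) integrate out $w$: show ${\EE}(\pi)_{!!}\C_{\{t=\Re\langle z'-z,w\rangle\}}\simeq\C_{\Delta}[-2n]$ (or the appropriate shift), where $\Delta\subset\V\times\V$ is the diagonal, which is the enhanced incarnation of the Fourier inversion formula; (4) conclude that the composite kernel is the identity kernel $\C_\Delta$, hence ${}^{\EE}\!\mathcal{F}_{\V^*}[2n]\circ{}^{\EE}\!\mathcal{F}^a_{\V}\simeq\mathrm{id}$; (5) run the symmetric argument for the other composition to get a genuine equivalence; (6) deduce~(\ref{equ:equivalence}) formally.

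Since this is Theorem~5.2 of \cite{Kash16}, I would in practice simply cite it; the sketch above records how one would reconstruct the proof. The only genuinely non-formal input is step~(3), the enhanced Fourier inversion, and for that I would rely on the computation of ${}^{\EE}\!\mathcal{F}_\V$ on the constant sheaf or on the explicit enhanced-sheaf description of $\C_{\{t=\Re\langle z,w\rangle\}}$-kernels together with the vanishing theorems for ${\EE}f_{!!}$ applied to $\R_{>0}$-conic objects, exactly as in the conic Fourier-Sato theory of \cite{Kash90} transported to the enhanced bordered framework.
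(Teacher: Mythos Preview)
Your proposal is correct and matches the paper's approach: the paper states this theorem as a citation from \cite{Kash16}, Theorem~5.2, and gives no proof of its own. Your remark that ``in practice I would simply cite it'' is exactly what the paper does; the additional kernel-composition sketch you provide is a reasonable outline of the argument in \cite{Kash16} but goes beyond what the paper itself records.
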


We can now restate Theorem $6.3$ of \cite{Kash16} with an additional explicit information. 

\begin{theorem}\label{thm:main}
There is a morphism of complexes
\begin{equation}\label{equ:explicitlaplace}
{q}_{\R !!} ({\mu_{-\langle z,w \rangle}}_* p_{\R}^{-1}\Db^{\TT,n,\bullet+n}_{\V_{\infty}}) \to \Db^{\TT,0,\bullet}_{\V^*_{\infty}}
\end{equation}
encoding the usual positive Laplace transform of distributions (with an extra real parameter), i.e. $\omega \mapsto \int_{q_{\R}} e^{\langle z,w \rangle} p_{\R}^*\omega$. 

\bigskip
\noindent
This morphism induces an isomorphism 
\begin{equation}\label{equ:derivedlaplace}
{}^{\EE}\!\mathcal{F}^a_{\V}(\Omega^{\EE}_{\V_{\infty}})[n] \xrightarrow{\sim} \Hol^{\EE}_{\V^*_{\infty}}
\end{equation}
in $\EE^{\bb}(\C^{\Sub}_{\V^*_{\infty}})$.
\end{theorem}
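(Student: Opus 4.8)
The plan is to construct the morphism~(\ref{equ:explicitlaplace}) by composing the three basic operations assembled in Section~4, applied to the correspondence $\V_{\infty} \xleftarrow{p} \V_{\infty}\times\V^*_{\infty} \xrightarrow{q} \V^*_{\infty}$, and then to identify the resulting enhanced morphism with the one coming from~(\ref{equ:isointroduction}) by tracing the construction of the latter through Propositions~\ref{prop:keyprop1},~\ref{prop:keyprop2} and~\ref{prop:keyprop3}. Concretely, I would first pull back along $p_{\R}$ using~(\ref{equ:enhpullback}) with $f = p$ (which extends to the holomorphic submersion $\overline{\V}\times\overline{\V}^* \to \overline{\V}$): this gives $p_{\R}^{-1}\Db^{\TT,n,\bullet+n}_{\V_{\infty}} \to \Db^{\TT,n,\bullet+n}_{\V_{\infty}\times\V^*_{\infty}}$, hence after applying ${\mu_{-\langle z,w\rangle}}_*$ a morphism with source ${\mu_{-\langle z,w\rangle}}_* p_{\R}^{-1}\Db^{\TT,n,\bullet+n}_{\V_{\infty}}$. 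Next I would multiply by the exponential factor $e^{\langle z,w\rangle}$ using~(\ref{equ:basicmult2}) of Proposition~\ref{prop:mult} with $\varphi = \langle z,w\rangle$ (a holomorphic function, tempered at infinity since it is polynomial in affine charts): since $\Re\varphi = \Re\langle z,w\rangle$, this yields ${\mu_{-\Re\langle z,w\rangle}}_*\Db^{\TT,n,\bullet+n}_{\V_{\infty}\times\V^*_{\infty}} \to \Db^{\TT,n,\bullet+n}_{\V_{\infty}\times\V^*_{\infty}}$; composing with the translation $\mu_{-\langle z,w\rangle}$ (whose real part is the relevant shift) gives a morphism landing in $\Db^{\TT,n,\bullet+n}_{\V_{\infty}\times\V^*_{\infty}}$. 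Finally I would integrate along the fibres of $q_{\R}$ using~(\ref{equ:enhintegration}) of the integration proposition (here $d_X = n$ over $\V^*_{\infty}$ so the double complex $\Db^{\TT,\bullet+n,\bullet+n}$ over the source maps to $\Db^{\TT,\bullet,\bullet}$ over the target, and the bidegree $(n,q+n)$ row lands in $(0,q)$). Composing these three steps produces exactly~(\ref{equ:explicitlaplace}), and by construction it sends $\omega \mapsto \int_{q_{\R}} e^{\langle z,w\rangle}\, p_{\R}^*\omega$.

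The second part is to check that the induced morphism in $\EE^{\bb}(\C^{\Sub}_{\V^*_{\infty}})$ is an isomorphism and agrees with~(\ref{equ:derivedlaplace}). For this I would pass from distributional forms to their $Q$-images: by the acyclicity results (Lemma~\ref{lem:acyclic} and the $f_{\R!!}$-acyclicity of the $\Db^{\TT,p,q}$ noted after it), $Q_{\V^*_{\infty}}({q}_{\R!!}(\cdots))$ computes ${\EE}q_{!!}$ applied to the $Q$-image of the source, and $\C_{\{t=-\langle z,w\rangle\}}\overset{+}\otimes(-) \simeq R{\mu_{-\Re\langle z,w\rangle}}_*(-)$ (using paragraph~2.5) turns the translate-and-multiply step into the convolution with $L^a_{\V}$. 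Thus the $Q$-image of~(\ref{equ:explicitlaplace}) is, up to shift, a morphism ${\EE}q_{!!}(L^a_{\V}\overset{+}\otimes {\EE}p^{-1}\Omega^{\EE}_{\V_{\infty}})[n] = {}^{\EE}\!\mathcal{F}^a_{\V}(\Omega^{\EE}_{\V_{\infty}})[n] \to \Hol^{\EE}_{\V^*_{\infty}}$. To see this is the \emph{same} morphism as the one underlying~(\ref{equ:isointroduction}) (which Theorem~6.3 of~\cite{Kash16} asserts to be an isomorphism), I would invoke parts (ii)--(iii) of Propositions~\ref{prop:keyprop1},~\ref{prop:keyprop2},~\ref{prop:keyprop3}: each says precisely that the relevant $\mathscr{D}$-module functoriality morphism (the $\mathscr{D}$-module pullback $\mathscr{D}_{X_{\infty}\to Y_{\infty}}$-map, the integration/adjunction map, and the tensor-by-$\mathscr{E}^{\varphi}$ map), after applying $\DR^{\EE}$ and using the canonical sections $1_{X\to Y}$ and $e^{\varphi}$, reduces on the nose to the corresponding distributional operation~(\ref{equ:derenhpullback}),~(\ref{equ:derenhintegration}),~(\ref{equ:dermult}). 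Since the Kashiwara--Schapira isomorphism~(\ref{equ:isointroduction}) is obtained by composing exactly the $\mathscr{D}$-module counterparts of these three operations (pullback of $\Omega^{\EE}_{\V_{\infty}}$, twist by $e^{\langle z,w\rangle}$, pushforward/integration to $\V^*_{\infty}$), the two compositions coincide, and in particular~(\ref{equ:explicitlaplace}) induces the isomorphism~(\ref{equ:derivedlaplace}).

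I expect the main obstacle to be the bookkeeping in this identification: one must verify that the specific $\mathscr{D}$-module identifications $\Omega^{\EE}_{\V_{\infty}} \simeq \DR^{\EE}(\mathscr{D}_{\V_{\infty}}\otimes_{\Hol}\Omega^{\otimes-1})[?]$ and $\Hol^{\EE}_{\V^*_{\infty}} \simeq \Sol^{\EE}(\mathscr{D}_{\V^*_{\infty}})$ used in~\cite{Kash16} to phrase~(\ref{equ:isointroduction}) are compatible with the degree shifts $[n]$ and $[d_X]$, $[d_Y]$ appearing throughout Section~4, and that the exponential $\mathscr{D}$-module $\mathscr{E}^{\langle z,w\rangle}_{\V\times\V^*\,|\,\overline{\V}\times\overline{\V}^*}$ plays the role of $\LL$ in Proposition~\ref{prop:keyprop3} with the canonical section $e^{\langle z,w\rangle}$ matching the exponential factor in the integral. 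This is the ``historical compilation'' the introduction warns about; I would present it as a composition diagram, citing Theorem~4.5 and Proposition~4.15 of~\cite{Kash16} for the factorisations, rather than re-deriving each morphism. Everything else — well-definedness of the three distributional operations on bordered spaces, the temperedness of the polynomial $\langle z,w\rangle$, and the computation $\omega\mapsto\int_{q_{\R}}e^{\langle z,w\rangle}p_{\R}^*\omega$ — is routine given Section~4, and the final isomorphy statement is then simply Theorem~6.3 of~\cite{Kash16} re-read through this dictionary.
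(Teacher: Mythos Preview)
Your proposal is correct and follows essentially the same route as the paper: construct~(\ref{equ:explicitlaplace}) as the composite of the enhanced pullback~(\ref{equ:enhpullback}) along $p_{\R}$, the exponential multiplication~(\ref{equ:basicmult2}) with $\varphi=\langle z,w\rangle$, and the fibre integration~(\ref{equ:enhintegration}) along $q_{\R}$, then identify the induced enhanced morphism with the Kashiwara--Schapira isomorphism of~\cite{Kash16} by invoking Propositions~\ref{prop:keyprop1},~\ref{prop:keyprop2},~\ref{prop:keyprop3} to match each distributional operation with its $\mathscr{D}$-module counterpart. The paper's own proof is in fact terser than yours on the bookkeeping you flag as the main obstacle, simply pointing the reader to the proof of Theorem~6.3 in~\cite{Kash16} and the three key propositions rather than spelling out the shift and canonical-section compatibilities.
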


\begin{proof}
On one hand, using morphisms (\ref{equ:enhintegration}), (\ref{equ:enhpullback}) and (\ref{equ:basicmult}), we can define a morphism of complexes
\begin{align*}
q_{\R !!} ({\mu_{-\langle z,w \rangle}}_* p_{\R}^{-1}\Db^{\TT,n,\bullet+n}_{\V_{\infty}})  & \to q_{\R !!} ({\mu_{-\langle z,w \rangle}}_* \Db^{\TT,n,\bullet+n}_{\V_{\infty}\times \V^*_{\infty}})\\
& \to q_{\R !!} (\Db^{\TT,n,\bullet+n}_{\V_{\infty}\times \V^*_{\infty}})\\
& \to \Db^{\TT,0,\bullet}_{ \V^*_{\infty}}
\end{align*}
which clearly encodes the usual positive Laplace transform of distributions. This induces a morphism ${}^{\EE}\!\mathcal{F}^a_{\V}(\Omega^{\EE}_{\V_{\infty}})[n] \rightarrow\Hol^{\EE}_{\V^*_{\infty}}$ in $\EE^{\bb}(\C^{\Sub}_{\V^*_{\infty}})$. 

\bigskip
\noindent
On the other hand, Theorem $6.3$ of \cite{Kash16} states that there is a canonical isomorphism ${}^{\EE}\!\mathcal{F}^a_{\V}(\Omega^{\EE}_{\V_{\infty}})[n] \xrightarrow{\sim} \Hol^{\EE}_{\V^*_{\infty}}$. Looking at the proof of this theorem, the reader should see that this isomorphism is built by using isomorphisms (\ref{equ:modadjointenh}),(\ref{equ:Functorial2}) and (\ref{equ:Functorial3}). Hence, Propositions~\ref{prop:keyprop1}, \ref{prop:keyprop2} and \ref{prop:keyprop3} allow to conclude.  
\end{proof}

\section{Link with the Legendre transform}

In this section, we recall some definitions and propositions of the sections $5.4$, $6.2$ and $6.3$ of \cite{Kash16} and make use of our previous remark about the usual positive Laplace transform.

\begin{definition}
Let $M$ be a real analytic manifold and $U$ be a subanalytic open subset of $M$. A function $f : U \to \R$ is subanalytic on $M$ if its graph $\Gamma_{f} \subset U \times \R$ is subanalytic in $M \times \overline{\R}$. A continuous function $f : U \to \R$ is almost $\CC^{\infty}$-subanalytic on $M$ if there is a subanalytic $\CC^{\infty}$-function $g : U \to \R$ such that
\[
\exists\, C >0, \forall x\in U  : |f(x)-g(x)| < C.
\] 
In this case, we say that $g$ is in the (ASA)-class of $f$.
\end{definition}

In \cite{Kash16}, M. Kashiwara and P. Schapira make the conjecture that any continuous subanalytic function is almost $\CC^{\infty}$-subanalytic. 

\begin{definition}
Let $f : U \to \R$ be a continuous almost $\CC^{\infty}$-subanalytic function on $M$. For any open subanalytic set $V \subset M$ and any $r\in \Z$, we set 
\[
e^{-f}\Db^{\ttt,r}_M(V) = \{\omega \in \Db^r_M(U \cap V) : e^g \omega \in  \Db^{\ttt,r}_M(U \cap V)\},
\]
where $g$ is in the (ASA)-class of $f$. This definition does not depend on $g$ and the correspondence $V\in \text{Op}^{\Sub,c}_M \mapsto e^{-f}\Db^{\ttt,r}_M(V)$ clearly defines a quasi-injective subanalytic sheaf on $M$. 
\end{definition}

\begin{proposition}[\cite{Kash16}, Theorem $6.12.$ See also \cite{Dagn14}, Proposition $7.3.$]\label{prop:conv1}
Let $M$ be a real analytic manifold and $U$ be a subanalytic open subset of $M$. Let $f : U \to \R$ be a continuous almost $\CC^{\infty}$-subanalytic function on $M$. There is an isomorphism 
\[
e^{-f} \Db^{\ttt,r}_M \simeq {\RR}\mathscr{I}\!hom^{\EE}(\C_{\{t \geq f(x), x\in U\}}, \Db^{\EE,r}_{M})
\] for each $r\in \Z,$ which is given on sections by $\omega \mapsto e^t \omega.$ In particular, the right hand side is concentrated in degree $0$.  
\end{proposition}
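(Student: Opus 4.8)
The plan is to identify the right-hand side ${\RR}\mathscr{I}\!hom^{\EE}(\C_{\{t \geq f(x), x\in U\}}, \Db^{\EE,r}_{M})$ with a sheaf-Hom computed directly on the bordered space $M_\infty \times \R_\infty$, using the quasi-injectivity of $\Db^{\TT,r}_{M_\infty}$ established along the lines of Lemma~\ref{lem:acyclic}. Recall that ${\RR}\mathscr{I}\!hom^{\EE}(QF_1, QF_2)$ for enhanced subanalytic sheaves is computed, after the standard reduction (see \cite{Dagn16}), by ${\RR}\pi_* {\RR}\mathscr{I}\!hom(F_1, F_2')$ where $F_2'$ is a suitable representative of $QF_2$ stable under convolution with $\C_{\{t\ge 0\}}\oplus\C_{\{t\le 0\}}$, and $\pi : M_\infty\times\R_\infty \to M_\infty$ is the projection. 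So first I would check that $\Db^{\TT,r}_{M_\infty}$ (seen as a sheaf on $M_\infty\times\R_\infty$) is already such a good representative: indeed $\C_{\{t\ge 0\}}\overset{+}\otimes \Db^{\TT,r}_{M_\infty} \simeq R{\mu_0}_*\Db^{\TT,r} \simeq \Db^{\TT,r}$ and similarly for $t\le 0$, since $\mu_0$ is the identity — here one uses paragraph 2.5 together with the fact that $\Db^{\TT,r}$ is a sheaf of solutions of $\partial_t = 1$, so it is translation-friendly. Hence $Q$ loses no information and ${\RR}\mathscr{I}\!hom^{\EE}(\C_{\{t\ge f\}}, \Db^{\EE,r}_M) \simeq {\RR}\pi_*{\RR}\mathscr{I}\!hom(\C_{\{t\ge f(x),\,x\in U\}}, \Db^{\TT,r}_{M_\infty})$.

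Next I would compute this last object on sections over a relatively compact subanalytic open $V\subset M$. Writing $Z = \{(x,t) : x\in U\cap V,\ t\ge f(x)\}$, a closed subanalytic subset of $V\times\R$, we have ${\RR}\mathscr{I}\!hom(\C_Z, \Db^{\TT,r}) = {\RR}\Gamma_Z(\Db^{\TT,r})$, and since $\Db^{\TT,r}_{M_\infty}$ is quasi-injective (flabby on relatively compact subanalytic opens), the derived section functor collapses: $H^k{\RR}\Gamma(V\times\R_\infty, {\RR}\Gamma_Z\Db^{\TT,r})$ vanishes for $k\ne 0$ and in degree $0$ it is the space of global tempered distributional $r$-forms on $U\cap V\times\R$ supported in $Z$. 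The crucial point is then the description of this space via the isomorphism $\omega(x,t)=e^t\rho(x)$: a solution of $\partial_t\omega=\omega$ supported in $\{t\ge f(x)\}$ corresponds to a distributional form $\rho$ on $U\cap V$ with the property that $e^t\rho(x)$ extends temperedly to $\overline M\times\PP$ and is supported in $t\ge f(x)$; tracking the growth condition near the boundary shows this is exactly the condition $e^{g}\rho\in\Db^{\ttt,r}_M(U\cap V)$, i.e. $\rho\in e^{-f}\Db^{\ttt,r}_M(V)$, where $g$ is in the (ASA)-class of $f$. The map $\rho\mapsto e^t\rho$ is precisely the map $\omega\mapsto e^t\omega$ in the statement.

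Concretely, I would argue the support-plus-temperedness equivalence in two implications. If $\omega=e^t\rho$ is tempered on $\overline M\times\PP$ near $t=+\infty$ and supported in $t\ge f(x)$, then restricting to a region $t\le g(x)+\text{const}$ and using that $f$ and $g$ differ by a bounded amount forces $e^{g}\rho$ to have polynomial growth near $\partial(U\cap V)$; conversely if $e^g\rho$ is tempered, then $e^t\rho = e^{t-g}\cdot e^g\rho$ is tempered wherever $t-g$ is bounded, and the exponential decay of $e^{t-g}$ as $t\to-\infty$ (on the support, where $t\ge f\ge g-C$, this region is empty, and as $t\to+\infty$ in $\PP$ the variable $1/t$ handles the growth) gives temperedness at infinity; the support condition is imposed by hand on the source. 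This is the step I expect to be the main obstacle: matching the bordered-space temperedness at $t=+\infty$ in $\PP$ against the classical estimate defining $e^{-f}\Db^{\ttt,r}$, and checking the $g$-independence simultaneously. Once this bijection is established on each $V\in\text{Op}^{\Sub,c}_M$ and seen to be compatible with restrictions (which is immediate, all maps being the obvious pullback/multiplication morphisms), it upgrades to an isomorphism of subanalytic sheaves, and the vanishing in nonzero degrees coming from quasi-injectivity gives the final assertion that the right-hand side is concentrated in degree $0$.
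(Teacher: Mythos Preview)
First, a framing remark: the paper does not give its own proof of this proposition. It is quoted verbatim from \cite{Kash16}, Theorem~6.12 (with a pointer to \cite{Dagn14}, Proposition~7.3), and the text immediately moves on to Corollary~\ref{cor:conv2}. So there is no in-paper argument to compare your sketch against; what can be assessed is whether your sketch is internally correct.

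It is not, and the error is concrete. Your reduction
\[
{\RR}\mathscr{I}\!hom^{\EE}(\C_{\{t\ge f\}}, \Db^{\EE,r}_{M}) \;\simeq\; {\RR}\pi_*\,{\RR}\mathscr{I}\!hom(\C_{\{t\ge f(x),\,x\in U\}}, \Db^{\TT,r}_{M_\infty})
\]
cannot hold with the plain ${\RR}\mathscr{I}\!hom$. Indeed, if you follow your own computation one more line, over $V$ you obtain the sections of $\Db^{\TT,r}$ on $V\times\R$ \emph{supported} in $Z=\{t\ge f(x)\}$. But every section of $\Db^{\TT,r}$ is of the form $e^{t}\rho(x)$; if such a section vanishes on $\{t<f(x)\}$ then $\rho=0$. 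Your formula therefore gives $0$, not $e^{-f}\Db^{\ttt,r}_M$. The mistake enters in the ``good representative'' step: you invoke $\C_{\{t\ge 0\}}\overset{+}\otimes F\simeq R{\mu_0}_*F$, but paragraph~2.5 only asserts $\C_{\{t=\varphi\}}\overset{+}\otimes F\simeq R{\mu_\varphi}_*F$ for the \emph{graph} $\{t=\varphi\}$. Convolution by $\C_{\{t\ge 0\}}$ is a genuine integral in the $t$-direction, not a translation, and the stability condition you need (that $\Db^{\TT,r}$ lies in the image of the right adjoint of $Q$) is a statement about $\mathscr{I}\!hom^{+}(\C_{\{t\ge 0\}}\oplus\C_{\{t\le 0\}},-)$, not about $\overset{+}\otimes$.

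The argument in \cite{Kash16} and \cite{Dagn14} goes through the convolution Hom $\mathscr{I}\!hom^{+}$ rather than the ordinary one: one computes ${\RR}\pi_*\,\mathscr{I}\!hom^{+}(\C_{\{t\ge f\}},\Db^{\TT,r})$, and it is precisely the $\mathscr{I}\!hom^{+}$ that converts the epigraph $\{t\ge f\}$ into a \emph{growth} condition on $\rho$ (namely $e^{g}\rho$ tempered, $g$ in the (ASA)-class of $f$) instead of a \emph{support} condition on $e^{t}\rho$. Once the correct functor is in place, the heuristic you describe in your last two paragraphs --- matching temperedness at $t=+\infty$ in $\PP$ against the defining estimate of $e^{-f}\Db^{\ttt,r}$ --- is indeed the content of the proof.
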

\noindent
One can notice an immediate corollary :

\begin{corollary}\label{cor:conv2}
Let $M$ be a real analytic manifold and let $f : U \to \R$ be a continuous almost $\CC^{\infty}$-subanalytic function on $M$. Let $S$ be a subanalytic closed subset of $U$, then
\[
\mathscr{I}\Gamma_{S}(e^{-f}\Db^{\ttt,r}_M) \simeq {\RR}\mathscr{I}\!hom^{\EE}(\C_{\{t \geq f(x), x \in S\}}, \Db^{\EE,r}_{M})
\] 
for each $r\in \Z,$ which is given on sections by $\omega \mapsto e^t \omega.$ In particular, the right hand side is concentrated in degree $0$.
\end{corollary}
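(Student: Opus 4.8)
\textbf{Proof proposal for Corollary~\ref{cor:conv2}.}

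The plan is to deduce the statement directly from Proposition~\ref{prop:conv1} by the standard manipulation that turns a section of the enhanced Hom out of a half-space into a section supported on a closed set. First I would recall that, for a subanalytic closed subset $S \subset U$, the restriction map on constant sheaves $\C_{\{t \geq f(x), x\in U\}} \to \C_{\{t \geq f(x), x\in S\}}$ induces, after applying ${\RR}\mathscr{I}\!hom^{\EE}(-, \Db^{\EE,r}_M)$, a morphism
\[
{\RR}\mathscr{I}\!hom^{\EE}(\C_{\{t \geq f(x), x\in S\}}, \Db^{\EE,r}_M) \to {\RR}\mathscr{I}\!hom^{\EE}(\C_{\{t \geq f(x), x\in U\}}, \Db^{\EE,r}_M) \simeq e^{-f}\Db^{\ttt,r}_M,
\]
the last isomorphism being Proposition~\ref{prop:conv1}. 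The key observation is then that this morphism is exactly ${\RR}\mathscr{I}\!hom(\C_S, -) = \mathscr{I}\Gamma_S(-)$ applied to $e^{-f}\Db^{\ttt,r}_M$: indeed, $\{t \geq f(x), x\in S\} = \{t \geq f(x), x\in U\} \cap (S \times \R)$ inside $M \times \R$ (after embedding as in section~2.7), and convolution/restriction behaves so that $\mathscr{I}\Gamma_S$ on the target corresponds to intersecting the epigraph with the cylinder $S \times \R$ on the source.

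Concretely, the cleanest route is: apply $\mathscr{I}\Gamma_S = {\RR}\mathscr{I}\!hom(\C_S, -)$ to the isomorphism of Proposition~\ref{prop:conv1}, obtaining
\[
\mathscr{I}\Gamma_S(e^{-f}\Db^{\ttt,r}_M) \simeq \mathscr{I}\Gamma_S\big({\RR}\mathscr{I}\!hom^{\EE}(\C_{\{t\geq f(x), x\in U\}}, \Db^{\EE,r}_M)\big),
\]
and then move $\C_S$ inside: since $\C_S$ is (pulled back from) a sheaf on $M$ and ${\RR}\mathscr{I}\!hom^{\EE}$ is compatible with such tensor/Hom manipulations in the second slot versus the first slot, one gets
\[
\mathscr{I}\Gamma_S\big({\RR}\mathscr{I}\!hom^{\EE}(\C_{\{t\geq f(x), x\in U\}}, \Db^{\EE,r}_M)\big) \simeq {\RR}\mathscr{I}\!hom^{\EE}\big(\C_S \otimes \C_{\{t\geq f(x), x\in U\}}, \Db^{\EE,r}_M\big),
\]
and $\C_S \otimes \C_{\{t\geq f(x), x\in U\}} = \C_{\{t\geq f(x), x\in S\}}$ because $S$ is closed in $U$. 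Tracking the section-level description of Proposition~\ref{prop:conv1} through these canonical identifications shows the resulting isomorphism is still $\omega \mapsto e^t\omega$, and concentration in degree $0$ is inherited since the right-hand side of Proposition~\ref{prop:conv1} is concentrated in degree $0$ and $\mathscr{I}\Gamma_S$ of a quasi-injective subanalytic sheaf — here $e^{-f}\Db^{\ttt,r}_M$ — is concentrated in degree $0$ as well (quasi-injective sheaves are $\mathscr{I}\Gamma_S$-acyclic for $S$ closed).

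The main obstacle I expect is purely bookkeeping: one must be careful that the identity $\mathscr{I}\Gamma_S \circ {\RR}\mathscr{I}\!hom^{\EE}(G, -) \simeq {\RR}\mathscr{I}\!hom^{\EE}(\pi^{-1}\C_S \otimes G, -)$ genuinely holds in $\EE^{\bb}(\C^{\Sub}_M)$, i.e. that pulling back $\C_S$ from $M$ to $M\times\R_\infty$ and tensoring into the first argument of the enhanced Hom is compatible with the external $\mathscr{I}\Gamma_S$ on the $\DD^{\bb}(\C^{\Sub}_M)$-valued output — this is a projection-formula-type statement for ${\RR}\mathscr{I}\!hom^{\EE}$ that holds at the level of the six operations but deserves a one-line justification (or a citation to \cite{Dagn16} / \cite{Kashi16}). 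Everything else is immediate, which is why the statement is phrased as a corollary.
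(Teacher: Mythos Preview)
Your proposal is correct and spells out precisely the argument the paper leaves implicit: the paper states this as an immediate corollary of Proposition~\ref{prop:conv1} with no proof given. The key steps you identify --- applying $\mathscr{I}\Gamma_S$ to both sides and using the projection-formula identity to rewrite $\C_S \otimes \C_{\{t\geq f(x),\, x\in U\}}$ as $\C_{\{t\geq f(x),\, x\in S\}}$ --- are exactly what makes the statement immediate, and your remark on quasi-injectivity for the degree-$0$ concentration is the right justification.
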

\noindent
Thanks to Proposition~\ref{prop:conv1}, one can introduce the following definition :

\begin{definition}
Let $U$ be an open subset of a complex manifold $X$ and $f : U \to \R$ be a continuous almost $\CC^{\infty}$-subanalytic function on $X$. For each $p\in \Z,$ one defines the complex of subanalytic sheaves $e^{-f} \Omega^{\ttt,p}_{X}$ as the Dolbeault complex $$0 \to e^{-f}\Db^{\ttt,p,0}_X \overset{\bar{\partial}}\to e^{-f}\Db^{\ttt,p,1}_X \to \dots \to e^{-f}\Db^{\ttt,p,d_X}_X \to 0.$$
\end{definition}
\noindent
Let us now focus on an important application.

\begin{definition}
Let $\V$ be a complex vector space of dimension $n$ and $f : \V \to \R \cup \{+\infty\}$ a function.

\begin{enumerate}[label=(\roman*)]

\item One says that $f$ is a closed proper convex function on $\V$ if its epigraph $$\{(z,t) \in \V \times \R : t \geq f(z)\}$$ is closed, convex and non-empty.
\item One notes $\text{Conv}(\V)$ the set of closed proper convex functions on $\V$.
\item For any $f\in \text{Conv}(\V)$, one sets $\dom(f) = f^{-1}(\R)$ and call it the domain of $f$. This set is convex and non-empty. The interior of this domain will be noted $\dom^{\circ}(f).$
\item For any $f \in \text{Conv}(\V)$, one defines a function $f^* : \V^* \to \R\cup \{+\infty\}$ by setting $$f^*(w) = \sup_{z\in \dom(f)} (\Re \langle z,w \rangle -f(z)).$$ It is called the Legendre transform of $f$. It is an element of $\text{Conv}(\V^*).$
\item For any $f \in \Conv(\V)$, one denotes by $H(f)$ the real affine space generated by $\dom(f)$ and one sets $E(f) = H(f^*)^{\bot}.$ One also sets $$d(f) = \dim_{\R} E(f) = \codim_{\R} H(f^*).$$
\end{enumerate}
\end{definition}

\begin{lemma}[\cite{Kash16}, Theorem. 5.9]\label{lem:legendre}
Let $f\in \Conv(\V)$. One has an isomorphism 
\begin{equation}\label{equ:Satolegendre}
{}^{\EE}\!\mathcal{F}^a_{\V}(\C_{\{t \geq f(z)\}}) \simeq \C_{\{t \geq -f^*(w), w \in \dom^{\circ}(f^*)\}} \otimes \, \text{or}_{E(f)} [-d(f)].
\end{equation}
\end{lemma}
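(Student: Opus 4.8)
The plan is to reduce the statement to the enhanced Laplace isomorphism~(\ref{equ:derivedlaplace}) combined with the description of the Fourier--Sato transform of a conic-type sheaf. The key point is that $\C_{\{t\geq f(z)\}}$ can be written, via the convolution formalism of paragraph~2.5, as $\C_{\{t=f(z)\}}\overset{+}\otimes\,\C_{\{t\geq 0\}}$, so that $\C_{\{t\geq f(z)\}}\simeq R{\mu_f}_*\C_{\{t\geq 0\}}$. I would therefore first unwind the definition of ${}^{\EE}\!\mathcal{F}^a_{\V}(\C_{\{t\geq f(z)\}})={\EE}q_{!!}\bigl(L^a_{\V}\overset{+}\otimes{\EE}p^{-1}\C_{\{t\geq f(z)\}}\bigr)$ and compute the object inside $q_{!!}$ using the projection formula and the stability of the convolution under direct images; the phase $t=-\Re\langle z,w\rangle - f(z)$ appears naturally as the sum of the two graph constraints.

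\textbf{Key steps.} First, I would establish the pointwise computation: for fixed $w\in\V^*$, the fiber of ${\EE}q_{!!}$ at $w$ involves the cohomology with compact supports along $\V$ of $\C_{\{t\geq -\Re\langle z,w\rangle - f(z)\}}$ over $\V$, which by the characterization of the quotient category $\EE^{\bb}$ (the $\{t\geq 0\}\oplus\{t\leq 0\}$-convolution being zero) only depends on the behavior of the function $z\mapsto -\Re\langle z,w\rangle - f(z)$ through its supremum, i.e. through $f^*(w)$. Second, I would handle the subtlety coming from $\dom(f)$ not being all of $\V$: the integration over $z$ is really an integration over the affine hull $H(f)$ of $\dom(f)$, and the transverse directions $E(f)=H(f^*)^\perp$ contribute the orientation sheaf $\mathrm{or}_{E(f)}$ and the cohomological shift $[-d(f)]$ by a standard Poincaré--Verdier-type computation (the fibers transverse to $H(f)$ being real affine spaces of dimension $d(f)$, whose cohomology with compact supports is $\mathrm{or}$ in top degree). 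Third, I would identify the resulting $w$-support: one gets $t\geq -f^*(w)$ for $w$ in the set where the supremum defining $f^*$ is attained in the interior and finite, i.e. $w\in\dom^\circ(f^*)$, while outside this set the convolution kills the contribution. Assembling these three computations yields exactly~(\ref{equ:Satolegendre}).

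\textbf{Main obstacle.} The routine parts are the convolution manipulations and the projection formula; the delicate part is the precise bookkeeping of the orientation sheaf $\mathrm{or}_{E(f)}$ and the shift $[-d(f)]$, together with pinning down that the correct $w$-support is $\dom^\circ(f^*)$ rather than its closure. This requires a careful local analysis near the boundary of $\dom(f^*)$, using that $f\in\Conv(\V)$ so that $f$ is lower semicontinuous and its epigraph is closed: the subanalyticity is not assumed here, so one must argue purely convex-geometrically, controlling how the sublevel behavior of $z\mapsto -\Re\langle z,w\rangle - f(z)$ degenerates as $w$ approaches $\partial\dom(f^*)$. I expect this boundary analysis, and the compatibility of the orientation twist with the bordered-space compactification $\overline{\V}$, to be where the real work lies; the rest follows formally from the machinery recalled in Sections~2--5.
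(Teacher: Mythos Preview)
The paper does not prove this lemma at all: it is stated with the attribution ``[\cite{Kash16}, Theorem~5.9]'' and no argument is given, since the result is quoted verbatim from Kashiwara--Schapira. So there is no proof in the paper to compare your proposal against; your sketch is an attempt to supply what the paper deliberately outsources.

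That said, one point in your strategy is off. You open by proposing to ``reduce the statement to the enhanced Laplace isomorphism~(\ref{equ:derivedlaplace})'', but~(\ref{equ:derivedlaplace}) computes ${}^{\EE}\!\mathcal{F}^a_{\V}$ applied to the \emph{analytic} object $\Omega^{\EE}_{\V_\infty}$, not to a constant sheaf on an epigraph. Lemma~\ref{lem:legendre} is a purely topological computation of the enhanced Fourier--Sato transform of $\C_{\{t\geq f(z)\}}$ and has nothing to do with holomorphic forms or distributions; indeed, in the paper's logic it is one of the \emph{inputs} used alongside~(\ref{equ:derivedlaplace}) to derive~(\ref{equ:isolegendre}), not a consequence of it. Your ``Key steps'' paragraph silently drops this reduction and instead outlines a direct fiberwise computation of ${\EE}q_{!!}$, which is the correct genre of argument (and is essentially how \cite{Kash16} proceeds, via a homotopy/deformation argument reducing to the conic case). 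So the substance of your sketch is reasonable, but the framing around~(\ref{equ:derivedlaplace}) should be removed: it plays no role and suggests a circularity that is not there.
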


Let $f : \V \to \R$ be a continuous almost $\CC^{\infty}$-subanalytic function on $\overline{\V}$ and let $S$ be a subanalytic closed subset of $\V$. Let us denote by $f_S$ the function which is equal to $f$ on $S$ and to $+\infty$ on $\V \backslash S.$ Assume that 

\begin{enumerate}[label=(\roman*)]
\item the function $f_S$ is convex,
\item $E(f_S) = \{0\},$
\item the convex set $\dom^{\circ}(f_S^*)$ is subanalytic,
\item the function $f_S^* : \dom^{\circ}(f_S^*) \to \R$ is continuous and almost $\CC^{\infty}$-subanalytic on $\overline{\V}^*$.
\end{enumerate}

Then, by using successively Corollary~\ref{cor:conv2}, the isomorphisms~(\ref{equ:equivalence}) and (\ref{equ:derivedlaplace}), Lemma~\ref{lem:legendre} and finally Proposition~\ref{prop:conv1}, one gets an isomorphism

\begin{equation}\label{equ:isolegendre}
H^n_{S}(\V, e^{-f}\Omega^{\ttt}_{\overline{\V}}) \xrightarrow{\sim} H^0(\V^*, e^{f_S^*}\Hol^{\ttt}_{\overline{\V}^*}) \simeq e^{f_S^*}\Db^{\ttt}_{\overline{\V}^*}(\dom^{\circ}(f_S^*))\cap \Hol_{\overline{\V}^*}(\dom^{\circ}(f_S^*)).
\end{equation}

This is simply Corollary $6.15$ of \cite{Kash16} with an additional closed support. We can make this isomorphism more explicit.

\begin{proposition}\label{prop:mainlegendre}
There is a commutative diagram 

\[
\xymatrix{H^n_{S}(\V, e^{-f}\Omega^{\ttt}_{\overline{\V}}) \ar[r]^{\sim} & H^0(\V^*, e^{f_S^*}\Hol^{\ttt}_{\overline{\V}^*}) \ar[d] \\
\Gamma(\V, \mathscr{I}\Gamma_{S} (e^{-f}\Db^{\ttt, n,n}_{\overline{\V}}))\ar[r]\ar[u] & \Gamma(\V^*, e^{f_S^*}\Db^{\ttt}_{\overline{\V}^*})}
\]
where the left arrow is defined by the Dolbeault resolution of $e^{-f}\Omega^{\ttt}_{\overline{\V}}$, the right arrow by the inclusion and the bottom arrow by $\omega \mapsto \LL^+ \omega := \int_q e^{\langle z,w \rangle} p^{*}\omega.$ In particular, the isomorphism (\ref{equ:isolegendre}) can be explicitly computed by

\begin{equation}
\frac{\Gamma(\V, \mathscr{I}\Gamma_{S} (e^{-f}\Db^{\ttt, n,n}_{\overline{\V}}))}{\bar{\partial}\Gamma(\V, \mathscr{I}\Gamma_{S} (e^{-f}\Db^{\ttt, n,n-1}_{\overline{\V}}))} \ni [\omega] \mapsto \LL^+ \omega \in H^0(\V^*, e^{f_S^*}\Hol^{\ttt}_{\overline{\V}^*}).
\end{equation} 
\end{proposition}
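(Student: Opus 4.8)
The plan is to assemble the commutative diagram by unwinding the construction of the isomorphism \eqref{equ:isolegendre} and checking compatibility with the explicit Dolbeault-level morphisms introduced in Sections 3--5. First I would recall that the top isomorphism in the diagram is built, step by step, from Corollary~\ref{cor:conv2} (which identifies $\mathscr{I}\Gamma_S(e^{-f}\Db^{\ttt,n,n}_{\overline{\V}})$ with an ${\RR}\mathscr{I}\!hom^{\EE}$-object), then the Laplace equivalence \eqref{equ:equivalence} applied to $F_1 = \C_{\{t\geq f(z),\,z\in S\}}$ and $F_2 = \Omega^{\EE}_{\V_{\infty}}[n]$, then the Laplace isomorphism \eqref{equ:derivedlaplace} to rewrite $F_2$ as ${}^{\EE}\!\mathcal{F}^a_{\V}(\Omega^{\EE}_{\V_{\infty}})[n] \simeq \Hol^{\EE}_{\V^*_{\infty}}$ and Lemma~\ref{lem:legendre} to rewrite $F_1$, and finally Proposition~\ref{prop:conv1} on $\V^*_{\infty}$. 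The key point of Theorem~\ref{thm:main} is precisely that \eqref{equ:derivedlaplace} is \emph{induced} by the explicit morphism of complexes \eqref{equ:explicitlaplace}, i.e. by $\omega \mapsto \int_{q_{\R}} e^{\langle z,w\rangle} p_{\R}^*\omega$ at the level of enhanced distributions. So the strategy is to pass from the abstract $\mathscr{I}\!hom^{\EE}$-incarnation to the concrete section-level description via Proposition~\ref{prop:conv1} and Corollary~\ref{cor:conv2} (which are given ``on sections by $\omega\mapsto e^t\omega$''), and to track what the Laplace morphism does there.

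The core computation is the following. Starting from a class $[\omega]$ with $\omega \in \Gamma(\V,\mathscr{I}\Gamma_S(e^{-f}\Db^{\ttt,n,n}_{\overline{\V}}))$, apply Corollary~\ref{cor:conv2} to view $e^t\omega$ as a section of ${\RR}\mathscr{I}\!hom^{\EE}(\C_{\{t\geq f(z),\,z\in S\}},\Db^{\EE,n}_{\V_\infty})$, i.e. (after the Dolbeault resolution of $\Omega^{\EE}_{\V_\infty}$) a morphism $\C_{\{t\geq f(z),\,z\in S\}}\to \Omega^{\EE}_{\V_\infty}[n]$ in $\EE^{\bb}(\C^{\Sub}_{\V_\infty})$. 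Applying ${}^{\EE}\!\mathcal{F}^a_{\V}$ and \eqref{equ:derivedlaplace} yields a morphism ${}^{\EE}\!\mathcal{F}^a_{\V}(\C_{\{t\geq f(z),\,z\in S\}})\to \Hol^{\EE}_{\V^*_\infty}$; by Lemma~\ref{lem:legendre} (using hypotheses (i)--(ii) so that $E(f_S)=\{0\}$ and $d(f_S)=0$, and (iii)--(iv) so that $f_S^*$ behaves well), the source is $\C_{\{t\geq -f_S^*(w),\,w\in\dom^{\circ}(f_S^*)\}}$, and a section of the resulting object is exactly an element of $H^0(\V^*,e^{f_S^*}\Hol^{\ttt}_{\overline{\V}^*})$ via Proposition~\ref{prop:conv1}, again read off ``on sections by $v\mapsto e^t v$''. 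Since by Theorem~\ref{thm:main} the functorial Laplace isomorphism is realized at the Dolbeault/enhanced-distribution level by $\int_{q_{\R}}e^{\langle z,w\rangle}p_{\R}^*(-)$, the composite sends $[\omega]$ precisely to $[\int_q e^{\langle z,w\rangle}p^*\omega] = \LL^+\omega$ after quotienting out the $t$-parameter (i.e. after applying ${\RR}\mathscr{I}\!hom^+(\C_{\{t\geq 0\}},-)$, which produces the genuine distribution on $\V^*$ from the enhanced one with its extra real parameter). The commutativity of the square then reduces to the statements, already recorded in Propositions~\ref{prop:conv1} and \ref{cor:conv2}, that these vertical identifications are compatible with multiplication by $e^t$ and with the Dolbeault differentials, so that passing to cohomology classes is legitimate and the bottom arrow $\omega\mapsto\LL^+\omega$ matches the top isomorphism under the verticals.

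Concretely I would organize the proof in three steps: (1) replace every object in the top row of \eqref{equ:isolegendre} by its section-level description, using Corollary~\ref{cor:conv2} on $\V_\infty$ and Proposition~\ref{prop:conv1} on $\V^*_\infty$, noting that both are explicit on sections; (2) insert the explicit morphism of complexes \eqref{equ:explicitlaplace} from Theorem~\ref{thm:main} in place of the abstract \eqref{equ:derivedlaplace}, so that the Laplace step becomes literally $\omega\mapsto\int_{q_{\R}}e^{\langle z,w\rangle}p_{\R}^*\omega$ on enhanced distributional top-forms; (3) compute the effect of the vector-valued convolution $\overset{+}\otimes$ and of $\EE q_{!!}$ on the relevant kernels $L^a_{\V}$ and $\C_{\{t\geq f(z),\,z\in S\}}$, checking that the support conditions coming from hypotheses (ii)--(iii) make $q_{\R}$ semi-proper on the relevant region so that the pushforward is computed by a genuine fiber integral, and that after discarding the $t$-direction one gets $\LL^+\omega$ as a tempered holomorphic function satisfying the growth bound encoded by $e^{f_S^*}$. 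The main obstacle I anticipate is Step (2)--(3): one must verify that the identification of the \emph{functorial} isomorphism \eqref{equ:derivedlaplace} with the \emph{explicit} complex map \eqref{equ:explicitlaplace} is compatible with pre-composing by a morphism $\C_{\{t\geq f(z),\,z\in S\}}\to\Omega^{\EE}_{\V_\infty}[n]$ and post-composing by Proposition~\ref{prop:conv1}, i.e. that no extra sign, shift, or normalization constant is introduced when one threads a section of a $\mathscr{I}\!hom^{\EE}$-object through $\overset{+}\otimes L^a_{\V}$ and $\EE q_{!!}$ --- this is exactly the kind of bookkeeping that Theorem~\ref{thm:main} was designed to make tractable, but assembling it into the commuting square still requires care with the convolution's unit and with the $\mu_{-\langle z,w\rangle}$ translation. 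The remaining steps are formal consequences of the explicit ``$\omega\mapsto e^t\omega$'' descriptions and the functoriality of all the operations involved.
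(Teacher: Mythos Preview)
Your proposal is correct and follows essentially the same route as the paper. The paper's proof is much more compressed: it simply observes that, by construction, the composite
\[
\Gamma(\V, \mathscr{I}\Gamma_{S} (e^{-f}\Db^{\ttt, n,n}_{\overline{\V}})) \to H^n_{S}(\V, e^{-f}\Omega^{\ttt}_{\overline{\V}}) \xrightarrow{\sim} H^0(\V^*, e^{f_S^*}\Hol^{\ttt}_{\overline{\V}^*}) \to \Gamma(\V^*, e^{f_S^*}\Db^{\ttt}_{\overline{\V}^*})
\]
is given by $\omega \mapsto e^{-t}\int_{q_{\R}} e^{\langle z,w\rangle} p_{\R}^*(e^t\omega) = \LL^+\omega$, and then invokes the quasi-injectivity of the sheaves $e^{-f}\Db^{\ttt,p,q}_{\overline{\V}}$ to identify $H^n_S$ with the displayed quotient. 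Two remarks: first, the ``bookkeeping'' you worry about in Steps (2)--(3) collapses to the one-line identity above, since $p_{\R}=p\times\mathrm{id}_{\R}$ makes $p_{\R}^*(e^t\omega)=e^t\,p^*\omega$ and the outer $e^{-t}$ then cancels the inner $e^t$; second, you should state the quasi-injectivity of $e^{-f}\Db^{\ttt,p,q}_{\overline{\V}}$ explicitly, as this is what justifies computing $H^n_S$ by the Dolbeault quotient and hence gives the ``In particular'' clause.
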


\begin{proof}
By construction and using (\ref{equ:explicitlaplace}), the map
\begin{align*}
\Gamma(\V, \mathscr{I}\Gamma_{S} (e^{-f}\Db^{\ttt, n,n}_{\overline{\V}})) &\xrightarrow{} H^n_{S}(\V, e^{-f}\Omega^{\ttt}_{\overline{\V}})\\ &\xrightarrow{\sim} H^0(\V^*, e^{f_S^*}\Hol^{\ttt}_{\overline{\V}^*}) \\ &\xrightarrow{} \Gamma(\V^*, e^{f_S^*}\Db^{\ttt}_{\overline{\V}^*})
\end{align*}
is given by $\omega \mapsto e^{-t} \int_{q_{\R}} e^{\langle z,w \rangle} p_{\R}^{*}(e^t \omega) = \LL^+ \omega.$ Then, the conclusion follows from the quasi-injectivity of $ e^{-f}\Db^{\ttt,p,q}_{\overline{\V}}$ for all $(p,q)\in \Z^2.$
\end{proof}

\section{Application I : Polya's theorem}

Let $\V$ be a one-dimensional complex vector space. Let us denote by $\PPP$ (resp. $\PPP^*$) the projective compactification of $\V$ (resp. $\V^*$) and  recall that $\Hol^{\ttt}_{\PPP}$ (resp. $\Omega^{\ttt}_{\PPP}$) is concentrated in degree $0$ and is a subanalytic subsheaf of $\Hol_{\PPP}$ (resp. $\Omega_{\PPP}$). If $U\in \Op^{\Sub,c}_{\PPP}$, one simply has $\Hol^t_{\PPP}(U) = \Hol_{\PPP}(U) \cap \Db^{\ttt}_{\PPP}(U)$ (resp. $\Omega^{\ttt}_{\PPP}(U) = \Omega_{\PPP}(U) \cap \Db^{\ttt,1,0}_{\PPP}(U)$). We shall also use the sheaf $\Omega^{\ttt_{\infty}}_{\PPP}$ of holomorphic forms tempered only at infinity. 

\medskip

Let $K$ be a non-empty convex compact subset of $\V$ and let $$h_K : w\in \V^* \mapsto \sup_{z\in K} \Re \langle z,w \rangle$$ be its support function. Let us choose a hermitian norm $||\cdot||$ on $\V$ and denote also by $||\cdot||$ the dual norm on $\V^*.$ The classical Polya's theorem (see e.g.\cite{Bere95}, sections $1.3$ and $1.4$) states that there is a (topological) isomorphism between 
\[
\Omega^{0}(\V \backslash K) := \{\omega\in \Omega_{\V}(\V \backslash K) : \lim_{z\to \infty} \omega(z) = 0\}
\] 
and 
\[\text{Exp}(K) := \{v \in \Hol_{\V^*}(\V^*) : \forall \varepsilon>0,\: \sup_{w\in \V^*} |v(w)|e^{-h_K(w)-\varepsilon ||w||} < \infty \}.
\] These spaces do not depend on the chosen norm. 

\medskip
\noindent
Given a global $\C$-linear coordinate $z$ of $\V$ and $w$ its dual coordinate, this isomorphism can be explicited by $\Omega^{0}(\V \backslash K) \ni u(z)dz \mapsto v \in \text{Exp}(K)$, with
\[
v(w) = \int_{C(0,r)^+} e^{zw} u(z)dz,
\] 
where $C(0,r)^+$ is a positively oriented circle of center $0$ and radius $r>0$, which encloses $K$. Of course, the integral does not depend on the chosen circle.

\bigskip

Let us fix a non-empty convex compact subset $K$ of $\V$. For all $\varepsilon>0$, we consider the thickening of $K$ by $\varepsilon$, that is to say $K_{\varepsilon} := K+\overline{D}(0,\varepsilon),$ where $\overline{D}(0,\varepsilon)=\{z\in \V : ||z||\leq \varepsilon\}.$ Let us consider the null function $f=0$ on $\V$. For all $\varepsilon>0$, we thus get a function $f_{K_{\varepsilon}}$ defined by
\[
f_{K_{\varepsilon}}(z) = \begin{cases} 0 \,\, &\text{if}\,\, z \in K_{\varepsilon}, \\ +\infty \,\, &\text{else}. \end{cases}
\] 
Clearly, this function is convex of domain $K_{\varepsilon}$. Moreover, its Legendre transform is given by 
\[
f_{K_{\varepsilon}}^*(w) = \sup_{z\in K_{\varepsilon}} \Re\langle z,w \rangle = h_{K_{\varepsilon}}(w) = h_{K}(w)+ h_{\overline{D}(0,\varepsilon)}(w) = h_{K}(w)+\varepsilon ||w||,
\] 
for all $w\in \V^*.$ In particular $\dom^{\circ}(f_{\varepsilon}^*) = \V^*.$ In order to apply Proposition~\ref{prop:mainlegendre}, we will assume throughout this section that $K_{\varepsilon}$ is subanalytic and that $h_{K_{\varepsilon}}$ is almost $\CC^{\infty}$-subanalytic on $\PPP^*$ for all $\varepsilon$. Thus, for all $\varepsilon,$ we get an isomorphism 
\begin{equation}\label{equ:temppolya1}
H^1_{K_{\varepsilon}}(\V, \Omega^{\ttt}_{\PPP}) \xrightarrow{\sim} e^{h_{K_{\varepsilon}}}\Hol^{\ttt}_{\PPP^*}(\V^*)
\end{equation}
\noindent
given by the positive Laplace transform. We shall show that the projective limit on $\varepsilon\to 0$ of this isomorphism is equivalent to Polya's theorem.

\begin{proposition}\label{prop:excision}
Let $\varepsilon>0.$ One has a canonical isomorphism 
\begin{equation}\label{equ:excision}
\Omega^{\ttt}_{\PPP}(\V \backslash K_{\varepsilon})/\Omega^{\ttt}_{\PPP}(\V) \xrightarrow{\sim} H^1_{K_{\varepsilon}}(\V, \Omega^{\ttt}_{\PPP})
\end{equation}
given by 
\[
\Omega^{\ttt}_{\PPP}(\V \backslash K_{\varepsilon})/\Omega^{\ttt}_{\PPP}(\V) \ni [\omega] \mapsto [\bar{\partial} \underline{\omega}] \in \frac{\Gamma(\V,\mathscr{I}\Gamma_{K_{\varepsilon}}(\Db^{\ttt,1,1}_{\PPP}))}{\bar{\partial}\Gamma(\V,\mathscr{I}\Gamma_{K_{\varepsilon}}(\Db^{\ttt,1,0}_{\PPP}))},
\]
where $\underline{\omega}$ is a distributional extension of $\omega$ to $\V.$
\end{proposition}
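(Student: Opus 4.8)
The plan is to realize the isomorphism~(\ref{equ:excision}) as the connecting homomorphism of an excision-type long exact sequence, and then to check that it is computed by the Dolbeault-theoretic formula announced in the statement. First I would recall that, since $K_{\varepsilon}$ is a subanalytic closed subset of $\V$, one has the distinguished triangle $\C_{K_{\varepsilon}} \to \C_{\V} \to \C_{\V\backslash K_{\varepsilon}} \xrightarrow{+1}$ in $\DD^{\bb}(\C^{\Sub}_{\PPP})$ (using that $\V$ is open in $\PPP$, so $\C_{\V}$ and $\C_{\V\backslash K_{\varepsilon}}$ make sense as subanalytic sheaves on $\PPP$). Applying ${\RR}\mathscr{I}\!hom(-,\Omega^{\ttt}_{\PPP})$ and then ${\RR}\Gamma(\PPP,\rho^{-1}(-))$ yields a long exact sequence relating ${\RR}\Gamma(\V,\Omega^{\ttt}_{\PPP})$, ${\RR}\Gamma(\V\backslash K_{\varepsilon},\Omega^{\ttt}_{\PPP})$ and ${\RR}\Gamma_{K_{\varepsilon}}(\V,\Omega^{\ttt}_{\PPP}) = {\RR}\Gamma(\PPP, \mathscr{I}\Gamma_{K_{\varepsilon}}(\Omega^{\ttt}_{\PPP}))$. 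Since $\V$ has complex dimension $1$ and $\Omega^{\ttt}_{\PPP}$ is concentrated in degree $0$, both $\Omega^{\ttt}_{\PPP}(\V)$ and $\Omega^{\ttt}_{\PPP}(\V\backslash K_{\varepsilon})$ sit in cohomological degree $0$; I would also check that $H^k_{K_{\varepsilon}}(\V,\Omega^{\ttt}_{\PPP})$ vanishes for $k=0$ (by holomorphy and the fact that $K_{\varepsilon}$ has empty interior complement... rather: a tempered holomorphic form supported on the compact $K_{\varepsilon}$ is zero, since $\Omega^{\ttt}_{\PPP}\subset\Omega_{\PPP}$ and $\V\backslash K_{\varepsilon}$ is dense) and for $k\geq 2$ (by degree reasons, as the relevant Dolbeault complex has length $1$). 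The long exact sequence then collapses to a four-term exact sequence
\[
0 \to \Omega^{\ttt}_{\PPP}(\V) \to \Omega^{\ttt}_{\PPP}(\V\backslash K_{\varepsilon}) \to H^1_{K_{\varepsilon}}(\V,\Omega^{\ttt}_{\PPP}) \to H^1(\V,\Omega^{\ttt}_{\PPP}) \to \cdots,
\]
and I would argue that $H^1(\V,\Omega^{\ttt}_{\PPP})=0$ (this is the tempered analogue of the $\bar\partial$-solvability on $\V$; for $\V$ one-dimensional and $\Omega^{\ttt}$ quasi-injective one has $\bar\partial$ surjective onto tempered $(1,1)$-forms, at least on $\V$), giving the desired isomorphism $\Omega^{\ttt}_{\PPP}(\V\backslash K_{\varepsilon})/\Omega^{\ttt}_{\PPP}(\V)\xrightarrow{\sim}H^1_{K_{\varepsilon}}(\V,\Omega^{\ttt}_{\PPP})$.

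Next I would make the connecting map explicit. Using the quasi-injective Dolbeault resolution $0\to \Db^{\ttt,1,0}_{\PPP}\xrightarrow{\bar\partial}\Db^{\ttt,1,1}_{\PPP}\to 0$ of $\Omega^{\ttt}_{\PPP}$ and the flabbiness-type properties of the involved sheaves, the group $H^1_{K_{\varepsilon}}(\V,\Omega^{\ttt}_{\PPP})$ is computed as $\Gamma(\V,\mathscr{I}\Gamma_{K_{\varepsilon}}(\Db^{\ttt,1,1}_{\PPP}))/\bar\partial\Gamma(\V,\mathscr{I}\Gamma_{K_{\varepsilon}}(\Db^{\ttt,1,0}_{\PPP}))$. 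Given a class $[\omega]$ with $\omega\in\Omega^{\ttt}_{\PPP}(\V\backslash K_{\varepsilon})$, the standard recipe for the connecting homomorphism is: lift $\omega$ to a distributional extension $\underline{\omega}\in\Gamma(\V,\Db^{\ttt,1,0}_{\PPP})$ (possible by quasi-injectivity of $\Db^{\ttt,1,0}_{\PPP}$, applied to the restriction-to-$\V\backslash K_\varepsilon$ map), then $\bar\partial\underline\omega$ vanishes on $\V\backslash K_{\varepsilon}$ because $\omega$ is holomorphic there, hence $\bar\partial\underline\omega$ defines a section of $\mathscr{I}\Gamma_{K_{\varepsilon}}(\Db^{\ttt,1,1}_{\PPP})$, and its class in the above quotient is the image of $[\omega]$. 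I would verify that this class is independent of the chosen extension $\underline\omega$ (two choices differ by a global tempered holomorphic form on $\V$, whose $\bar\partial$ is zero) and of the representative $\omega$ modulo $\Omega^{\ttt}_{\PPP}(\V)$ (changing $\omega$ by a global tempered holomorphic form on $\V$ changes $\underline\omega$ accordingly, leaving $\bar\partial\underline\omega$ unchanged). This is exactly the formula $[\omega]\mapsto[\bar\partial\underline\omega]$ in the statement.

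The main obstacle, and the point deserving the most care, is the vanishing statements that make the long exact sequence collapse, in particular the surjectivity of $\bar\partial$ at the level of tempered distributional forms on $\V$ (equivalently $H^1(\V,\Omega^{\ttt}_{\PPP})=0$) and the vanishing $H^0_{K_{\varepsilon}}(\V,\Omega^{\ttt}_{\PPP})=0$. For the latter, a tempered holomorphic $1$-form on $\PPP$ with support in the compact $K_{\varepsilon}$ is in particular a genuine holomorphic form on the dense open $\V\backslash K_{\varepsilon}$ that is identically zero there, hence zero, so this is harmless. For the former, I would invoke the solvability of the $\bar\partial$-equation with tempered estimates on the affine line — this is classical and can be phrased as quasi-injectivity of $\Db^{\ttt,1,0}_{\PPP}$ together with the surjectivity of $\bar\partial: \Db^{\ttt,1,0}_{\PPP}(\V)\to\Db^{\ttt,1,1}_{\PPP}(\V)$, which follows from the fact that on $\V=\C$ one can solve $\bar\partial u = g$ with polynomial growth control by an explicit convolution with the Cauchy kernel combined with a cutoff argument, using that $\overline\V\setminus\V=\{\infty\}$ is a single point. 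Once these two vanishings and the elementary check $H^k=0$ for $k\geq2$ are in place, the rest of the argument is formal homological algebra and the explicit description of the connecting map, both routine.
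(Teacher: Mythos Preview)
Your approach is essentially the same as the paper's: the excision triangle gives the long exact sequence, the vanishing $H^0_{K_{\varepsilon}}(\V,\Omega^{\ttt}_{\PPP})=0$ is by analytic continuation, and $H^1(\V,\Omega^{\ttt}_{\PPP})=0$ by tempered $\bar\partial$-solvability (the paper cites H\"ormander and {\L}ojasiewicz for this; your Cauchy-kernel sketch is a reasonable substitute). For the explicit formula the paper writes out the shifted mapping cone $M(\rho_{K_\varepsilon})[-1]$ of the restriction $\Db^{\ttt,1,\bullet}_{\PPP}(\V)\to\Db^{\ttt,1,\bullet}_{\PPP}(\V\backslash K_\varepsilon)$ and checks that $(\bar\partial\underline\omega,0)$ and $(0,\omega)$ are cohomologous; your ``standard recipe for the connecting homomorphism'' is the same computation in different packaging.

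One slip to correct: two distributional extensions $\underline\omega_1,\underline\omega_2$ of $\omega$ do \emph{not} differ by a tempered holomorphic form on $\V$; they differ by an element of $\Gamma(\V,\mathscr{I}\Gamma_{K_\varepsilon}(\Db^{\ttt,1,0}_{\PPP}))$ (a tempered $(1,0)$-distribution supported in $K_\varepsilon$), and it is precisely because $\bar\partial$ of this difference lies in the denominator $\bar\partial\Gamma(\V,\mathscr{I}\Gamma_{K_\varepsilon}(\Db^{\ttt,1,0}_{\PPP}))$ that the class $[\bar\partial\underline\omega]$ is well-defined. Your conclusion is right, but the stated reason is not.
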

\begin{proof}
1) Consider the excision distinguished triangle 

\begin{equation}\label{equ:disttriangle}
{\RR}\Gamma(\V, \mathscr{I}\Gamma_{K_{\varepsilon}}(\Omega^{\ttt}_{\PPP})) \to {\RR}\Gamma(\V, \Omega^{\ttt}_{\PPP}) \to {\RR}\Gamma(\V\backslash K_{\varepsilon}, \Omega^{\ttt}_{\PPP}) \overset{+1}\to.
\end{equation}
This gives the following exact sequence : 

\begin{center}
\begin{tikzcd}
0 \rar & H^0_{K_{\varepsilon}}(\V, \Omega^{\ttt}_{\PPP}) \rar & H^0(\V, \Omega^{\ttt}_{\PPP}) \rar
             \ar[draw=none]{d}[name=X, anchor=center]{}
    & H^0(\V\backslash K_{\varepsilon}, \Omega^{\ttt}_{\PPP}) \ar[rounded corners,
            to path={ -- ([xshift=2ex]\tikztostart.east)
                      |- (X.center) \tikztonodes
                      -| ([xshift=-2ex]\tikztotarget.west)
                      -- (\tikztotarget)}]{dll}[at end]{} \\      
  & H^1_{K_{\varepsilon}}(\V, \Omega^{\ttt}_{\PPP}) \rar & H^1(\V, \Omega^{\ttt}_{\PPP}) \rar & H^1(\V\backslash K_{\varepsilon}, \Omega^{\ttt}_{\PPP}) \rar & \cdots
\end{tikzcd}
\end{center} 

\medskip
\noindent
Firstly, it is clear that $H^0_{K_{\varepsilon}}(\V, \Omega^{\ttt}_{\PPP}) \simeq 0$ since a non-trivial holomorphic form can't be supported by a compact subset. Secondly, the surjectivity of $\overline{\partial} : \Db^{\ttt,1,0}_{\PPP}(\V) \to \Db^{\ttt,1,1}_{\PPP}(\V)$ (see \cite{Horm58} and \cite{Loja69}) implies that $H^1(\V, \Omega^{\ttt}_{\PPP}) \simeq 0$.  Hence we get the exact sequence 
\[
0 \to \Omega^{\ttt}_{\PPP}(\V) \to \Omega^{\ttt}_{\PPP}(\V \backslash K_{\varepsilon}) \to  H^1_{K_{\varepsilon}}(\V, \Omega^{\ttt}_{\PPP}) \to 0
\] 
which proves the first statement.

\bigskip
\noindent
2) If follows from~(\ref{equ:disttriangle}) that ${\RR}\Gamma(\V, \mathscr{I}\Gamma_{K_{\varepsilon}}\Omega^{\ttt}_{\PPP})$ is canonically isomorphic to the mapping cone $M(\rho_{K_{\varepsilon}})$ of the restriction morphism 
\[
\rho_{K_{\varepsilon}} : \Db^{\ttt, 1, \bullet}_{\PPP}(\V) \to \Db^{\ttt, 1, \bullet}_{\PPP}(\V\backslash K_{\varepsilon})
\] 
shifted by $-1$. We know that $M(\rho_{K_{\varepsilon}})[-1]$ is a complex concentrated in degrees $0,1$ and $2$ of the form 
\[
\Db^{\ttt,1,0}_{\PPP}(\V) \to \Db^{\ttt,1,1}_{\PPP}(\V) \oplus \Db^{\ttt,1,0}_{\PPP}(\V\backslash K_{\varepsilon}) \to \Db^{\ttt, 1, 1}_{\PPP}(\V\backslash K_{\varepsilon}),
\] 
where the differentials in degrees $0$ and $1$ are given by the matrices 
\[
\binom{\bar{\partial}}{-\rho_{K_{\varepsilon}}} \,\,\,\,\, \text{and} \,\,\,\,\, \begin{pmatrix}-\rho_{K_{\varepsilon}} & -\bar{\partial}\end{pmatrix}.
\] 
We have to show that 
\[
\binom{\bar{\partial} \underline{\omega}}{0} \,\,\,\,\, \text{and} \,\,\,\,\, \binom{0}{\omega}
\] 
are two $1$-cycles of this complex which are in the same cohomology class. This is clear since 
\[
\binom{\bar{\partial}}{-\rho_{K_{\varepsilon}}} \underline{\omega} + \binom{0}{\omega} = \binom{\bar{\partial} \underline{\omega}}{0}.
\]
Hence the conclusion.
\end{proof}

\begin{corollary}\label{cor:compactprojlim}
One has a canonical isomorphism 
\begin{equation}\label{equ:temppolya2}
\Omega^{\ttt_{\infty}}_{\PPP}(\V\backslash K)/\Omega^{\ttt}_{\PPP}(\V) \xrightarrow{\sim} \varprojlim_{\varepsilon\to 0} H^1_{K_{\varepsilon}}(\V, \Omega^{\ttt}_{\PPP}).
\end{equation}
\noindent
Let $\varepsilon>0$ and let $\psi_{\varepsilon}$ be a $\CC^{\infty}$-cutoff function which is equal to $1$ on $\V \backslash K_{\varepsilon}$ and to $0$ on $K_{\varepsilon/2}$. Let $\omega \in \Omega^{\ttt_{\infty}}_{\PPP}(\V\backslash K).$ Then the image of $[\omega]$ through the canonical map
\[
\Omega^{\ttt_{\infty}}_{\PPP}(\V\backslash K)/\Omega^{\ttt}_{\PPP}(\V) \rightarrow H^1_{K_{\varepsilon}}(\V, \Omega^{\ttt}_{\PPP})
\]
is given by $[\bar{\partial} (\psi_{\varepsilon} \omega)].$
\end{corollary}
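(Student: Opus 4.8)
The plan is to establish the isomorphism (\ref{equ:temppolya2}) by passing (\ref{equ:excision}) to the projective limit over $\varepsilon \to 0$, and then to make the limit map explicit via the cutoff functions $\psi_\varepsilon$. For the first step, I would take the inverse system indexed by $\varepsilon > 0$ (ordered so that $\varepsilon' < \varepsilon$ gives a restriction map), where on the left of (\ref{equ:excision}) the transition maps $\Omega^{\ttt}_{\PPP}(\V\backslash K_{\varepsilon})/\Omega^{\ttt}_{\PPP}(\V) \to \Omega^{\ttt}_{\PPP}(\V\backslash K_{\varepsilon'})/\Omega^{\ttt}_{\PPP}(\V)$ are induced by the open inclusions $\V\backslash K_{\varepsilon} \hookrightarrow \V\backslash K_{\varepsilon'}$, and on the right by the natural maps on local cohomology coming from $K_{\varepsilon'} \subset K_{\varepsilon}$. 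Since the isomorphism of Proposition~\ref{prop:excision} is canonical (it is built from the excision triangle and the chosen distributional extension, in a way that commutes with restriction), these squares commute, so $\varprojlim_\varepsilon$ of the left-hand sides is isomorphic to $\varprojlim_\varepsilon H^1_{K_{\varepsilon}}(\V, \Omega^{\ttt}_{\PPP})$. It then remains to identify the left-hand limit with $\Omega^{\ttt_{\infty}}_{\PPP}(\V\backslash K)/\Omega^{\ttt}_{\PPP}(\V)$: one has $\bigcup_{\varepsilon>0}(\V\backslash K_{\varepsilon}) = \V\backslash K$, and a holomorphic form on $\V\backslash K$ that is tempered near $\infty$ restricts, for each $\varepsilon$, to a form in $\Omega^{\ttt}_{\PPP}(\V\backslash K_{\varepsilon})$ (temperedness near the compact boundary $\partial K_{\varepsilon}$ is automatic, being a bounded-away-from-the-pole condition for a holomorphic form), giving a well-defined map to the limit; conversely a compatible family of classes, lifted to actual tempered forms, glues on the increasing union $\V\backslash K$ to a holomorphic form tempered at $\infty$, and the two constructions are mutually inverse modulo $\Omega^{\ttt}_{\PPP}(\V)$.

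For the explicit description, I would fix $\varepsilon > 0$ and a cutoff $\psi_\varepsilon \in \CC^\infty(\V)$ with $\psi_\varepsilon \equiv 1$ on $\V\backslash K_\varepsilon$ and $\psi_\varepsilon \equiv 0$ on $K_{\varepsilon/2}$. Given $\omega \in \Omega^{\ttt_{\infty}}_{\PPP}(\V\backslash K)$, the form $\psi_\varepsilon \omega$ (extended by $0$ across $K$) is a global $\CC^\infty$ $(1,0)$-form on $\V$ which agrees with $\omega$ off $K_\varepsilon$ and is tempered at $\infty$; hence it serves as a distributional extension $\underline{\omega}$ of $\omega|_{\V\backslash K_{\varepsilon}}$ in the sense of Proposition~\ref{prop:excision} (note $\omega$ is already holomorphic hence tempered near $\partial K_\varepsilon$, so the relevant temperedness holds on all of $\V$). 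By the formula in Proposition~\ref{prop:excision}, the image of $[\omega]$ in $H^1_{K_{\varepsilon}}(\V, \Omega^{\ttt}_{\PPP})$ is therefore $[\bar\partial(\psi_\varepsilon\omega)]$, with $\bar\partial(\psi_\varepsilon\omega) = (\bar\partial\psi_\varepsilon)\wedge\omega$ supported in the annular region $K_\varepsilon\backslash K_{\varepsilon/2}$, in particular in $K_\varepsilon$, as required for a section of $\mathscr{I}\Gamma_{K_\varepsilon}(\Db^{\ttt,1,1}_{\PPP})$.

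The main obstacle I anticipate is the bookkeeping around temperedness and the compactification $\PPP$, rather than anything deep: one must check that $\psi_\varepsilon\omega$ is a legitimate section of $\Db^{\ttt,1,0}_{\PPP}$ over a relatively compact subanalytic neighborhood basis, i.e.\ that it extends distributionally across $\infty\in\PPP$ with the temperedness encoded in $\Omega^{\ttt_{\infty}}_{\PPP}$; and that the identification $\varprojlim_\varepsilon \Omega^{\ttt}_{\PPP}(\V\backslash K_{\varepsilon})/\Omega^{\ttt}_{\PPP}(\V) \simeq \Omega^{\ttt_{\infty}}_{\PPP}(\V\backslash K)/\Omega^{\ttt}_{\PPP}(\V)$ is compatible with quotienting — here one should note that the quotients make $\varprojlim$ behave well because $\Omega^{\ttt}_{\PPP}(\V)$ is the same term in every stage, and that no $\varprojlim^1$ obstruction intervenes since the transition maps on $\Omega^{\ttt}_{\PPP}(\V\backslash K_\varepsilon)$ are injective (restriction of holomorphic forms across the thin annuli $K_\varepsilon\backslash K_{\varepsilon'}$). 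The choice of $\psi_\varepsilon$ being irrelevant modulo coboundaries, and the compatibility of the $[\bar\partial(\psi_\varepsilon\omega)]$ across different $\varepsilon$, then follow formally from Proposition~\ref{prop:excision} and the commutativity already established.
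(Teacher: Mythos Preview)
Your proposal is correct and essentially follows the paper's approach. The paper's proof is very terse: it packages the key observation --- that temperedness at the compact boundary $\partial K_\varepsilon$ is automatic for a form holomorphic across it --- as a cofinality argument via the sandwich
\[
\Omega^{\ttt_{\infty}}_{\PPP}(\V\backslash K_{\varepsilon}) \subset \Omega^{\ttt}_{\PPP}(\V\backslash K_{2\varepsilon}) \subset \Omega^{\ttt_{\infty}}_{\PPP}(\V\backslash K_{3\varepsilon}),
\]
and then notes $\varprojlim_\varepsilon \Omega^{\ttt_{\infty}}_{\PPP}(\V\backslash K_{\varepsilon}) \simeq \Omega^{\ttt_{\infty}}_{\PPP}(\V\backslash K)$, which is immediate since the $\ttt_\infty$-condition does not see the inner boundary. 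You instead compute $\varprojlim_\varepsilon \Omega^{\ttt}_{\PPP}(\V\backslash K_\varepsilon)$ directly; the substance is the same. One small correction: the vanishing of the $\varprojlim^1$ obstruction when passing to quotients comes from the fact that the kernel system $\Omega^{\ttt}_{\PPP}(\V)$ is \emph{constant} (hence Mittag--Leffler), not from injectivity of the transition maps on $\Omega^{\ttt}_{\PPP}(\V\backslash K_\varepsilon)$. Your treatment of the explicit cutoff description is fine and is exactly how Proposition~\ref{prop:excision} is meant to be applied; the paper does not spell this part out.
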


\begin{proof}
Simply notice that there are inclusions
\[
\Omega^{\ttt_{\infty}}_{\PPP}(\V\backslash K_{\varepsilon}) \subset \Omega^{\ttt}_{\PPP}(\V\backslash K_{2\varepsilon}) \subset \Omega^{\ttt_{\infty}}_{\PPP}(\V\backslash K_{3\varepsilon})
\]
for all $\varepsilon>0$ and that 
\[
\varprojlim_{\varepsilon\to 0} \Omega^{\ttt_{\infty}}_{\PPP}(\V\backslash K_{\varepsilon}) \simeq \Omega^{\ttt_{\infty}}_{\PPP}(\V\backslash K).
\]
\end{proof}

\begin{remark}
Note that in 
\[
\Omega^{\ttt_{\infty}}_{\PPP}(\V\backslash K) = \{\omega \in \Omega_{\V}(\V \backslash K) : \omega \,\, \text{is tempered at}\,\, \infty\},
\]
one can replace the condition "$\omega$ is tempered at infinity" by the condition "$\omega$ has polynomial growth at infinity". Indeed, thanks to Cauchy's inequalities, the polynomial growth of $\omega$ implies the polynomial growth of all its derivatives. 
\end{remark}

\begin{definition}
We set 
\[
\text{Exp}^{\ttt}(K) = \varprojlim_{\varepsilon \to 0} e^{h_{K_{\varepsilon}}}\Hol^{\ttt}_{\PPP^*}(\V^*) \simeq  \{v \in \Hol_{\V^*}(\V^*) : \forall \varepsilon>0, v \in e^{h_{K_{\varepsilon}}}\Db^{\ttt}_{\PPP^*}(\V^*)\}.
\]
\end{definition}

\noindent
This set does not depend on the chosen norm.
\begin{theorem}\label{thm:mainpolya}
There is a canonical isomorphism of $\C$-vector spaces
\begin{equation}
\Omega^{\ttt_{\infty}}_{\PPP}(\V\backslash K)/\Omega^{\ttt}_{\PPP}(\V) \xrightarrow{\sim} \Exp^{\ttt}(K).
\end{equation}
\noindent
 Given a global $\C$-linear coordinate $z$ of $\V$ and $w$ its dual coordinate, this isomorphism can be explicited by $[u(z)dz] \mapsto v$ with
\[
v(w) = \int_{C(0,r)^+} e^{zw} u(z)dz,
\] 
where $C(0,r)^+$ is a positively oriented circle, which encloses $K$.
\end{theorem}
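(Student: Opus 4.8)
The plan is to assemble Theorem~\ref{thm:mainpolya} by combining three ingredients already established: the isomorphism~(\ref{equ:temppolya1}) coming from Proposition~\ref{prop:mainlegendre} applied to $f=0$ and $S=K_\varepsilon$, the excision description of $H^1_{K_\varepsilon}(\V,\Omega^{\ttt}_{\PPP})$ from Proposition~\ref{prop:excision}, and the passage to the projective limit from Corollary~\ref{cor:compactprojlim} together with the definition of $\Exp^{\ttt}(K)$. Concretely, for each $\varepsilon>0$ the composite
\[
\Omega^{\ttt}_{\PPP}(\V\backslash K_\varepsilon)/\Omega^{\ttt}_{\PPP}(\V) \xrightarrow{\sim} H^1_{K_\varepsilon}(\V,\Omega^{\ttt}_{\PPP}) \xrightarrow{\sim} e^{h_{K_\varepsilon}}\Hol^{\ttt}_{\PPP^*}(\V^*)
\]
is an isomorphism of $\C$-vector spaces. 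First I would check that these isomorphisms are compatible with the restriction maps indexed by $\varepsilon$ (on the left, the inclusions of the deleted neighbourhoods; on the right, the inclusions $e^{h_{K_\varepsilon}}\Hol^{\ttt}_{\PPP^*}(\V^*)\hookrightarrow e^{h_{K_{\varepsilon'}}}\Hol^{\ttt}_{\PPP^*}(\V^*)$ for $\varepsilon<\varepsilon'$, which hold since $h_{K_\varepsilon}\le h_{K_{\varepsilon'}}$). This compatibility is essentially formal because every map in sight is induced by the single formula $\omega\mapsto \LL^+\omega$ and by restriction of sections. Taking $\varprojlim_{\varepsilon\to 0}$ and invoking Corollary~\ref{cor:compactprojlim} on the source and the definition of $\Exp^{\ttt}(K)$ on the target yields the desired isomorphism.

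Next I would make the map explicit. Fix the coordinate $z$ and its dual $w$, and let $\omega=u(z)\,dz\in\Omega^{\ttt_\infty}_{\PPP}(\V\backslash K)$. By Corollary~\ref{cor:compactprojlim} the image of $[\omega]$ in $H^1_{K_\varepsilon}(\V,\Omega^{\ttt}_{\PPP})$ is represented, under the identification of Proposition~\ref{prop:excision}, by the $(1,1)$-current $\bar\partial(\psi_\varepsilon\omega)$, where $\psi_\varepsilon$ is the chosen cutoff. Applying the positive Laplace transform from Proposition~\ref{prop:mainlegendre}, the image in $e^{h_{K_\varepsilon}}\Hol^{\ttt}_{\PPP^*}(\V^*)$ is the function
\[
w\mapsto \LL^+\!\bigl(\bar\partial(\psi_\varepsilon\omega)\bigr) = \int_{\V} e^{zw}\,\bar\partial\bigl(\psi_\varepsilon(z)u(z)\,dz\bigr).
\]
Since $e^{zw}$ is holomorphic in $z$, we have $e^{zw}\bar\partial(\psi_\varepsilon u\,dz)=\bar\partial\bigl(e^{zw}\psi_\varepsilon u\,dz\bigr)$, and $e^{zw}\psi_\varepsilon u\,dz$ is a smooth compactly supported (indeed rapidly-decaying) $(1,0)$-form on $\V$ that agrees with $e^{zw}u\,dz$ outside $K_\varepsilon$ and vanishes on $K_{\varepsilon/2}$. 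By Stokes' theorem applied on the region bounded by a large circle (where the form is negligible by temperedness) and a circle $C(0,r)^+$ enclosing $K_\varepsilon$, this integral collapses to the contour integral $\int_{C(0,r)^+} e^{zw}u(z)\,dz$. I would spell out this Stokes/residue computation carefully: write $\V\backslash K_\varepsilon$ as an increasing union of annuli $\{r\le\|z\|\le R\}$, note $\bar\partial(e^{zw}\psi_\varepsilon u\,dz)=d(e^{zw}\psi_\varepsilon u\,dz)$ since the form is of type $(1,0)$, apply Stokes, and let $R\to\infty$ using that $u$ has polynomial growth while—wait, one must be slightly careful: $e^{zw}$ grows on half-planes, so the correct statement is that after integrating $\bar\partial$ of a compactly supported form the result is exactly $-\int_{\partial(\text{support complement})}$, i.e. the outer boundary contributes nothing because the form is literally zero there once $R$ is large. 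The upshot is $v(w)=\int_{C(0,r)^+}e^{zw}u(z)\,dz$, and independence of $r$ is Cauchy's theorem.

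Finally I would record that this formula is independent of the auxiliary choices: the cutoff $\psi_\varepsilon$ because two choices differ by a form supported in $K_\varepsilon\backslash K_{\varepsilon/2}$, which is $\bar\partial$-exact with compactly supported primitive and hence Laplace-transforms to $0$ in the quotient; the radius $r$ by Cauchy; and the norm by the already-noted norm-independence of $\Omega^{\ttt_\infty}_{\PPP}(\V\backslash K)$ and of $\Exp^{\ttt}(K)$. \textbf{The main obstacle} I anticipate is not any single hard estimate but rather the bookkeeping in the Stokes argument at infinity: because $e^{zw}$ is unbounded in $z$ for fixed $w$, one cannot naively say ``the boundary term at $\|z\|=R$ tends to $0$''; the clean way out is to exploit that $\psi_\varepsilon\omega$ has \emph{compact} support after the cutoff (well, $\psi_\varepsilon$ equals $1$ near infinity, so $\psi_\varepsilon\omega$ is \emph{not} compactly supported, but $\bar\partial(\psi_\varepsilon\omega)=\bar\partial\psi_\varepsilon\wedge\omega$ \emph{is} compactly supported in $K_\varepsilon\backslash K_{\varepsilon/2}$), so the integrand $e^{zw}\bar\partial(\psi_\varepsilon\omega)$ is genuinely compactly supported and the degeneration to the contour is honest. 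I would therefore phrase the computation directly in terms of $\bar\partial\psi_\varepsilon\wedge\omega$ rather than $\bar\partial(\psi_\varepsilon\omega)$ viewed abstractly, which removes the difficulty entirely. The only other point requiring a line of justification is that $\LL^+$ commutes with the projective limit on the nose, which follows because at each finite stage $\LL^+$ is the \emph{same} integral formula and the transition maps on both sides are restrictions/inclusions.
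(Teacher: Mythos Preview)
Your proposal is correct and follows essentially the same route as the paper: take the projective limit of~(\ref{equ:temppolya1}) using Corollary~\ref{cor:compactprojlim} and the definition of $\Exp^{\ttt}(K)$, then compute $\LL^+(\bar\partial(\psi_\varepsilon\omega))$ explicitly and reduce it to the contour integral. The only difference is presentational: the paper handles the Stokes step in one line by observing that $e^{zw}\psi_\varepsilon u(z)\,dz$ is holomorphic on $\V\backslash K_\varepsilon\supset\V\backslash\overline{D}(0,r)$, so the integral over $\V$ equals the integral over $\overline{D}(0,r)$, and then Green's theorem on that disk gives $\int_{C(0,r)^+}e^{zw}u(z)\,dz$ directly (using $\psi_\varepsilon=1$ on $C(0,r)$); your eventual formulation via the compact support of $\bar\partial\psi_\varepsilon\wedge\omega$ in $K_\varepsilon\backslash K_{\varepsilon/2}$ is equivalent and avoids the detour through ``boundary terms at $R\to\infty$'' that you rightly flagged as problematic.
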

\begin{proof}
We apply $\underset{\varepsilon\to 0}\varprojlim$ to (\ref{equ:temppolya1}) as well as (\ref{equ:temppolya2}) to get the isomorphisms
\[
\Omega^{\ttt_{\infty}}_{\PPP}(\V\backslash K)/\Omega^{\ttt}_{\PPP}(\V) \xrightarrow{\sim}\varprojlim_{\varepsilon\to 0} H^1_{K_{\varepsilon}}(\V, \Omega^{\ttt}_{\PPP})  \xrightarrow{\sim }\text{Exp}^{\ttt}(K).
\]
Let us explicit the composition of these two maps within coordinates. Let $u(z)dz$ be in $\Omega^{\ttt_{\infty}}_{\PPP}(\V\backslash K)$ and let us fix $r>0$ such that $K \subset D(0,r).$ Let us consider $\varepsilon>0$ small enough such that $K \subsetneq K_{\varepsilon} \subsetneq D(0,r).$ Let us also choose a cutoff function $\psi_{\varepsilon}$ as in Corollary~\ref{cor:compactprojlim}. Then, applying this corollary, we see that the image of $[u(z)dz]$ in $ e^{h_{K_{\varepsilon}}}\Hol^{\ttt}_{\PPP}(\V^*)$ is given by $v$, where
\begin{align*}
v(w) = \LL^+_w( \bar{\partial} (\psi_{\varepsilon} u(z)dz)) &= \int_{\V} e^{zw} \bar{\partial} (\psi_{\varepsilon} u(z)dz) \, = \int_{\V} \bar{\partial} (e^{zw}\psi_{\varepsilon} u(z)dz) \\
& \underset{(1)}= \int_{\overline{D}(0,r)} \bar{\partial} (e^{zw}\psi_{\varepsilon} u(z)dz) \underset{(2)}= \int_{C(0,r)^+}e^{zw}\psi_{\varepsilon}u(z)dz \\
& \underset{(3)}= \int_{C(0,r)^+}e^{zw}u(z)dz,
\end{align*}
where $(1)$ comes from the holomorphicity of $e^{zw}\psi_{\varepsilon}u(z)dz$ on the open set $$\V \backslash K_{\varepsilon} \supset \V \backslash \overline{D}(0,r),$$ $(2)$ from Green's theorem and $(3)$ from the fact that $\psi_{\varepsilon}=1$ on $C(0,r) \subset \V \backslash K_{\varepsilon}.$

\bigskip
\noindent
To conclude, we remark that this formula remains unchanged for smaller $\varepsilon>0$. Hence, it is the image of $[u(z)dz]$ in $\text{Exp}^{\ttt}(K).$
\end{proof}

\begin{remark}\label{rem:polya}
Theorem~\ref{thm:mainpolya} is actually nothing more but Polya's theorem. Firstly, the canonical map
\[
\Omega^{0}(\V \backslash K) \ni \omega \mapsto [\omega] \in \Omega^{\ttt_{\infty}}_{\PPP}(\V\backslash K)/\Omega^{\ttt}_{\PPP}(\V),
\] 
is clearly injective. Secondly, the inclusion $\text{Exp}(S) \subset \text{Exp}^{\ttt}(S)$ is an equality. Indeed, if $e^{-h_{K_{\varepsilon}}}v$ is tempered at infinity, then $e^{-h_{K_{2\varepsilon}}}v$ is bounded.
\end{remark}

\section{Application II : Méril's theorem}

We keep the same conventions that in the previous section. Méril's theorem (see \cite{Meril83}) is a kind of non-compact analogue of Polya's theorem. Let $S$ be a non-empty closed convex non-compact subset of $\V$ which contains no lines. Let us set  
\[
S_{\infty} = \{z\in \V : z+S \subset S \}
\] 
the asymptotic cone of $S$ and 
\[
S_{\infty}^*= \{w \in \V^* : \forall z\in S_{\infty}, \: \Re \langle z,w \rangle \leq 0 \}
\] 
the polar cone of $S_{\infty}$. It is clear that $S_{\infty}^*$ is a closed convex proper cone of $\V^*$ with non empty interior, since $S$ does not contain any line. (We refer to \cite{Ausl03} for more details on convex geometry and asymptotic cones.) Let $\xi_0 \in \V^*$ be a fixed point on the bisector of $S_{\infty}^*.$ For all $\varepsilon'>0$, let us set
\[
\mathscr{H}_S(\V, \varepsilon') := \frac{\{\omega\in \Omega_{\V}(\V \backslash S) : \forall r>\varepsilon> 0, \, \sup_{z \in S_r \backslash S^{\circ}_{\varepsilon}} ||e^{\langle z,\varepsilon'\xi_0 \rangle}\omega(z)|| < \infty)\}}{\{\omega\in \Omega_{\V}(\V) : \forall r>0, \, \sup_{z \in S_r} ||e^{\langle z,\varepsilon'\xi_0 \rangle}\omega(z)|| < \infty)\}}.
\] 
Set also
\[
\text{Exp}(S) := \{v\in \Hol_{\V^*}((S_{\infty}^*)^{\circ}) : \forall \varepsilon, \varepsilon'>0,\!\! \sup_{w \in S_{\infty}^*+\varepsilon'\xi_0} |v(w)|e^{-h_{S}(w)-\varepsilon ||w||} < \infty\}.
\]
These spaces do not depend on the chosen norm. Méril's theorem states that there is a (topological) isomorphism 
\[
\varprojlim_{\varepsilon'\to 0} \mathscr{H}_S(\V, \varepsilon') \xrightarrow{\sim} \text{Exp}(S).
\]
Given a global $\C$-linear coordinate $z$ of $\V$ and $w$ its dual coordinate, this isomorphism can be explicited by
\[
\varprojlim_{\varepsilon'\to 0}\mathscr{H}_S(\V,\varepsilon') \ni ([u_{\varepsilon'}(z)dz])_{\varepsilon'} \mapsto v \in \text{Exp}(S),
\]
with
\[
v(w) = \int_{\partial S_{\varepsilon}^+} e^{zw} u_{\varepsilon'}(z)dz,
\] 
where $\partial S_{\varepsilon}^+$ is the positively oriented boundary of any thickening $S_{\varepsilon}$. (Recall that the boundary of a plane convex set is always a rectifiable curve.) This integral does not depend on $\varepsilon, \varepsilon'$. 

\medskip
These functional spaces are deeply linked to analytic functionals with non-compact carrier and, according to \cite{Roev78}, are of interest in quantum field mechanics. 

\bigskip
Let us fix $S$ a non-empty closed convex non-compact subset of $\V$ which contains no lines and $\xi_0$ a point on the bisector of $S_{\infty}^*.$ For all $\varepsilon'>0,$ we consider the function $f_{\varepsilon'} : \V \to \R$ defined by $f_{\varepsilon'}(z)= \Re \langle z, \varepsilon'\xi_0 \rangle.$ For all $\varepsilon,\varepsilon'>0$, we thus get a function $f_{\varepsilon,\varepsilon'}:={(f_{\varepsilon'})}_{S_{\varepsilon}}$ defined by 
\[
f_{\varepsilon,\varepsilon'}(z)=\begin{cases} \Re \langle z, \varepsilon'\xi_0 \rangle \,\, &\text{if}\,\, z \in S_{\varepsilon}, \\ +\infty \,\, &\text{else}. \end{cases}
\]  
Clearly, this function is convex of domain $S_{\varepsilon}$. Moreover, its Legendre transform is given by 
\[
f_{\varepsilon,\varepsilon'}^*(w) = \sup_{z\in S_{\varepsilon}} \Re \langle z,(w-\varepsilon'\xi_0) \rangle = h_{S_{\varepsilon}}(w-\varepsilon'\xi_0),
\] 
for all $w\in \V^*.$ Since it is well-known that $\dom^{\circ}(h_S) = (S_{\infty}^*)^{\circ}$, one immediately gets that $\dom^{\circ}(f_{\varepsilon,\varepsilon'}^*) = (S_{\infty}^*)^{\circ}+\varepsilon'\xi_0.$ In particular, since this open cone is not empty, its generated affine space is $\V^*$. In order to apply Proposition~\ref{prop:mainlegendre}, we will assume throughout this section that $S_{\varepsilon}$ is subanalytic and that $h_{S_{\varepsilon}}$ is almost $\CC^{\infty}$-subanalytic on $\PPP^*$ for all $\varepsilon>0$. Hence we get an isomorphism 
\begin{equation}\label{equ:tempmeril1}
H^1_{S_{\varepsilon}}(\V, e^{-\langle z, \varepsilon'\xi_0 \rangle}\Omega^{\ttt}_{\PPP}) \xrightarrow{\sim} e^{h_{S_{\varepsilon}}(w-\varepsilon'\xi_0)}\Hol^{\ttt}_{\PPP^*}((S_{\infty}^*)^{\circ}+\varepsilon'\xi_0)
\end{equation}

\medskip
\noindent
given by the positive Laplace transform for all $\varepsilon, \varepsilon'>0.$ (Here $e^{-\langle z, \varepsilon'\xi_0 \rangle}\Omega^{\ttt}_{\PPP}$ is defined in the obvious way and is of course equal to $e^{-\Re\langle z, \varepsilon'\xi_0 \rangle}\Omega^{\ttt}_{\PPP}$.) We shall show that the projective limit on $\varepsilon, \varepsilon'\to 0$ of this isomorphism is equivalent to Méril's theorem. 

\bigskip
\noindent
One can easily adapt Proposition~\ref{prop:excision} and Corollary~\ref{cor:compactprojlim} to obtain 

\begin{proposition}\label{prop:tempmeril3}
For all $\varepsilon,\varepsilon'>0$ there is a canonical isomorphism 
\begin{equation}\label{equ:tempmeril2}
e^{-\langle z, \varepsilon'\xi_0 \rangle}\Omega^{\ttt}_{\PPP}(\V\backslash S_{\varepsilon})/e^{-\langle z, \varepsilon'\xi_0 \rangle}\Omega^{\ttt}_{\PPP}(\V) \xrightarrow{\sim} H^1_{S_{\varepsilon}}(\V, e^{-\langle z, \varepsilon'\xi_0 \rangle}\Omega_{\PPP}^{\ttt}).
\end{equation}
Let $\varepsilon, \varepsilon'>0$ and let $\psi_{\varepsilon}$ be a $\CC^{\infty}$-cutoff function which is equal to $1$ on $\V \backslash S_{\varepsilon}$ and to $0$ on $S_{\varepsilon/2}$. Let $\omega \in e^{-\langle z, \varepsilon'\xi_0 \rangle}\Omega^{\ttt_{\infty}}_{\PPP}(\V\backslash S)$ Then the image of $[\omega]$ through the canonical map 
\begin{align*}
e^{-\langle z, \varepsilon'\xi_0 \rangle}\Omega^{\ttt_{\infty}}_{\PPP}(\V\backslash S)/e^{-\langle z, \varepsilon'\xi_0 \rangle}\Omega^{\ttt}_{\PPP}(\V) &\xrightarrow{\sim} \varprojlim_{\varepsilon \to 0} H^1_{S_{\varepsilon}}(\V, e^{-\langle z, \varepsilon'\xi_0 \rangle}\Omega_{\PPP}^{\ttt})\\ &\xrightarrow{} H^1_{S_{\varepsilon}}(\V, e^{-\langle z, \varepsilon'\xi_0 \rangle}\Omega_{\PPP}^{\ttt})
\end{align*}
is given by $[\bar{\partial} (\psi_{\varepsilon} \omega)].$
\end{proposition}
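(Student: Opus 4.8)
The plan is to transcribe almost verbatim the proofs of Proposition~\ref{prop:excision} and Corollary~\ref{cor:compactprojlim}; the two features absent from the compact case are the linear exponential weight $e^{-\langle z,\varepsilon'\xi_0\rangle}$ and the non-compactness of $S_\varepsilon$, and I would deal with them as follows.

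Fix $\varepsilon,\varepsilon'>0$. First I would write the excision distinguished triangle for $e^{-\langle z,\varepsilon'\xi_0\rangle}\Omega^{\ttt}_{\PPP}$ on $\V$ along the subanalytic closed set $S_\varepsilon$,
\[
{\RR}\Gamma(\V,\mathscr{I}\Gamma_{S_\varepsilon}(e^{-\langle z,\varepsilon'\xi_0\rangle}\Omega^{\ttt}_{\PPP})) \to {\RR}\Gamma(\V,e^{-\langle z,\varepsilon'\xi_0\rangle}\Omega^{\ttt}_{\PPP}) \to {\RR}\Gamma(\V\backslash S_\varepsilon,e^{-\langle z,\varepsilon'\xi_0\rangle}\Omega^{\ttt}_{\PPP}) \overset{+1}\to,
\]
and pass to the long exact cohomology sequence. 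Two vanishings are needed. For $H^{1}(\V,e^{-\langle z,\varepsilon'\xi_0\rangle}\Omega^{\ttt}_{\PPP})\simeq 0$, I would note that multiplication by the entire, nowhere-vanishing function $e^{\langle z,\varepsilon'\xi_0\rangle}$ induces an isomorphism of complexes $e^{-\langle z,\varepsilon'\xi_0\rangle}\Db^{\ttt,1,\bullet}_{\PPP}(\V)\xrightarrow{\sim}\Db^{\ttt,1,\bullet}_{\PPP}(\V)$ commuting with $\bar\partial$, so that the surjectivity of $\bar\partial$ on the left-hand complex reduces to the H\"ormander--\L ojasiewicz surjectivity of $\bar\partial\colon\Db^{\ttt,1,0}_{\PPP}(\V)\to\Db^{\ttt,1,1}_{\PPP}(\V)$ already used in Proposition~\ref{prop:excision}. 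For $H^{0}_{S_\varepsilon}(\V,e^{-\langle z,\varepsilon'\xi_0\rangle}\Omega^{\ttt}_{\PPP})\simeq 0$, a $\bar\partial$-closed distributional $(1,0)$-form supported in $S_\varepsilon$ is, by hypoellipticity of $\bar\partial$, a holomorphic $1$-form vanishing on the non-empty open set $\V\backslash S_\varepsilon$, hence identically zero on the connected manifold $\V$. The long exact sequence then collapses to the short exact sequence
\[
0 \to e^{-\langle z,\varepsilon'\xi_0\rangle}\Omega^{\ttt}_{\PPP}(\V) \to e^{-\langle z,\varepsilon'\xi_0\rangle}\Omega^{\ttt}_{\PPP}(\V\backslash S_\varepsilon) \to H^{1}_{S_\varepsilon}(\V,e^{-\langle z,\varepsilon'\xi_0\rangle}\Omega^{\ttt}_{\PPP}) \to 0,
\]
which yields the isomorphism~(\ref{equ:tempmeril2}).

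Next I would make this isomorphism explicit as in part 2) of the proof of Proposition~\ref{prop:excision}: because $e^{-\langle z,\varepsilon'\xi_0\rangle}\Db^{\ttt,p,q}_{\PPP}$ is quasi-injective, hence acyclic for sections over the subanalytic opens $\V$ and $\V\backslash S_\varepsilon$ of the compact space $\PPP$, the left term of the triangle is the $(-1)$-shifted mapping cone of the restriction $\rho_{S_\varepsilon}\colon e^{-\langle z,\varepsilon'\xi_0\rangle}\Db^{\ttt,1,\bullet}_{\PPP}(\V)\to e^{-\langle z,\varepsilon'\xi_0\rangle}\Db^{\ttt,1,\bullet}_{\PPP}(\V\backslash S_\varepsilon)$, and the same matrix computation shows that the class of $\omega\in e^{-\langle z,\varepsilon'\xi_0\rangle}\Omega^{\ttt}_{\PPP}(\V\backslash S_\varepsilon)$ maps to $[\bar\partial\underline{\omega}]$ for any extension $\underline{\omega}\in e^{-\langle z,\varepsilon'\xi_0\rangle}\Db^{\ttt,1,0}_{\PPP}(\V)$ of $\omega$. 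Given $\omega\in e^{-\langle z,\varepsilon'\xi_0\rangle}\Omega^{\ttt_{\infty}}_{\PPP}(\V\backslash S)$ and a cutoff $\psi_\varepsilon$ as in the statement, chosen (as one may) with derivatives of tempered growth at infinity, the form $\underline{\omega}=\psi_\varepsilon\omega$ is such an extension: it is $\CC^{\infty}$ on $\V$ (extended by $0$ across $S_{\varepsilon/2}$), and $e^{\langle z,\varepsilon'\xi_0\rangle}\psi_\varepsilon\omega$ is tempered at infinity since near infinity it is obtained from the holomorphic, tempered-at-infinity form $e^{\langle z,\varepsilon'\xi_0\rangle}\omega$ by multiplication with $\psi_\varepsilon$, a function with tempered derivatives. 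As $\omega$ is holomorphic, $\bar\partial\underline{\omega}=\bar\partial\psi_\varepsilon\wedge\omega$ is supported in the subanalytic closed set $S_\varepsilon\backslash S^{\circ}_{\varepsilon/2}\subset S_\varepsilon$, so the image of $[\omega]$ is $[\bar\partial(\psi_\varepsilon\omega)]$.

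Finally, for the projective limit I would argue as in Corollary~\ref{cor:compactprojlim}. The inclusions
\[
e^{-\langle z,\varepsilon'\xi_0\rangle}\Omega^{\ttt_{\infty}}_{\PPP}(\V\backslash S_\varepsilon) \subset e^{-\langle z,\varepsilon'\xi_0\rangle}\Omega^{\ttt}_{\PPP}(\V\backslash S_{2\varepsilon}) \subset e^{-\langle z,\varepsilon'\xi_0\rangle}\Omega^{\ttt_{\infty}}_{\PPP}(\V\backslash S_{3\varepsilon})
\]
hold for all $\varepsilon>0$ (a form holomorphic on $\V\backslash S_\varepsilon$ is holomorphic across $\partial S_{2\varepsilon}$, hence automatically tempered there), and $\bigcup_{\varepsilon>0}(\V\backslash S_\varepsilon)=\V\backslash S$, so $\varprojlim_{\varepsilon\to 0}e^{-\langle z,\varepsilon'\xi_0\rangle}\Omega^{\ttt}_{\PPP}(\V\backslash S_\varepsilon)\simeq e^{-\langle z,\varepsilon'\xi_0\rangle}\Omega^{\ttt_{\infty}}_{\PPP}(\V\backslash S)$. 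Applying $\varprojlim_{\varepsilon\to 0}$ to the short exact sequence above, whose kernel term does not depend on $\varepsilon$ and hence has vanishing $\varprojlim^{1}$, gives the stated isomorphism, the canonical map onto each $H^{1}_{S_\varepsilon}$ being restriction and therefore, by the previous paragraph, equal to $[\bar\partial(\psi_\varepsilon\omega)]$. The only point that genuinely goes beyond Proposition~\ref{prop:excision} --- and the one I would check most carefully --- is that multiplication by the entire but non-tempered function $e^{\langle z,\varepsilon'\xi_0\rangle}$ is an isomorphism at the level of tempered-distributional sections over $\V$, including at the point at infinity, where one uses that $e^{\langle z,\varepsilon'\xi_0\rangle}$ differs from its modulus weight $e^{\Re\langle z,\varepsilon'\xi_0\rangle}$ by the multiplicative factor $e^{i\,\Im\langle z,\varepsilon'\xi_0\rangle}$, a $\CC^{\infty}$ function all of whose derivatives have tempered growth.
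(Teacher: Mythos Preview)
Your proposal is correct and follows exactly the route the paper intends: the paper gives no proof beyond the sentence ``One can easily adapt Proposition~\ref{prop:excision} and Corollary~\ref{cor:compactprojlim}'', and what you have written is precisely that adaptation, with the two new ingredients (the exponential weight and the non-compactness of $S_\varepsilon$) handled correctly. Your observation that multiplication by $e^{\langle z,\varepsilon'\xi_0\rangle}$ intertwines the weighted and unweighted Dolbeault complexes on $\V$, via the factorisation $e^{\langle z,\varepsilon'\xi_0\rangle}=e^{i\Im\langle z,\varepsilon'\xi_0\rangle}\cdot e^{\Re\langle z,\varepsilon'\xi_0\rangle}$ with the first factor $\CC^{\infty}$-tempered, is the right way to reduce the $H^1$-vanishing to the H\"ormander--\L ojasiewicz input already used, and is in the spirit of Proposition~\ref{prop:mult}.
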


\noindent
By analogy with Méril's spaces, we are led to introduce the following definitions :

\begin{definition}
For all $\varepsilon'>0$ we set
\[
\mathscr{H}^{\ttt}_{S}(\V,\varepsilon') = \frac{\{\omega \in \Omega_{\V}(\V \backslash S) : \forall r> \varepsilon > 0, \omega \in e^{-\langle z, \varepsilon'\xi_0 \rangle} \Db^{\ttt,1,0}_{\PPP}(S^{\circ}_r \backslash S_{\varepsilon})\}}{\{\omega \in \Omega_{\V}(\V) : \forall r>0, \omega \in e^{-\langle z, \varepsilon'\xi_0 \rangle} \Db^{\ttt,1,0}_{\PPP}(S^{\circ}_r)\}}.
\]
\end{definition}

Remark that $\mathscr{H}^{\ttt}_{S}(\V,\varepsilon') \simeq e^{-\langle z, \varepsilon'\xi_0 \rangle}\Omega^{\ttt_{\infty}}_{\PPP}(\V\backslash S)/e^{-\langle z, \varepsilon'\xi_0 \rangle}\Omega^{\ttt}_{\PPP}(\V)$ for all $\varepsilon'>0.$

\begin{definition}
For all $\varepsilon,\varepsilon'>0$ we set 
\medskip
\[
\text{Exp}^{\ttt}_{\varepsilon,\varepsilon'}(S) =e^{h_{S_{\varepsilon}}(w-\varepsilon'\xi_0)}\Hol^{\ttt}_{\PPP^*}((S_{\infty}^*)^{\circ}+\varepsilon'\xi_0)
\]
as well as
\[
\text{Exp}^{\ttt}_{\varepsilon'}(S) = \varprojlim_{\varepsilon\to 0} \text{Exp}^{\ttt}_{\varepsilon,\varepsilon'}(S), \quad
\text{Exp}^{\ttt}(S) =\varprojlim_{\varepsilon'\to 0} \text{Exp}^{\ttt}_{\varepsilon'}(S).
\] 
\end{definition}

\begin{theorem}\label{thm:maintheorem}
Let $\varepsilon'>0.$ There is a canonical isomorphism of $\C$-vector spaces
\begin{equation}
\mathscr{H}^{\ttt}_{S}(\V,\varepsilon') \xrightarrow{\sim} \Exp^{\ttt}_{\varepsilon'}(S). 
\end{equation}
These spaces do not depend on the chosen norm. 

\medskip
\noindent
Given a global $\C$-linear coordinate $z$ of $\V$ and $w$ its dual coordinate, this isomorphism can be explicited by
$
\mathscr{H}^{\ttt}_{S}(\V,\varepsilon') \ni [u(z)dz] \mapsto v \in \Exp^{\ttt}_{\varepsilon'}(S),
$
with
\[
v(w) = \int_{\partial S_{\varepsilon}^+} e^{zw} u(z)dz,
\] 
where $\partial S_{\varepsilon}^+$ is the positively oriented boundary of any thickening $S_{\varepsilon}$.
\end{theorem}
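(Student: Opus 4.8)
The plan is to mimic, in the non-compact setting, the strategy used for Polya's theorem (Theorem~\ref{thm:mainpolya}): take the projective limit over $\varepsilon \to 0$ of the tempered Laplace-isomorphism~(\ref{equ:tempmeril1}), combine it with the sheaf-theoretic excision isomorphism of Proposition~\ref{prop:tempmeril3}, and then make the composite explicit in coordinates using a cutoff function and Green's theorem. First, for fixed $\varepsilon' > 0$, I would apply $\varprojlim_{\varepsilon \to 0}$ to~(\ref{equ:tempmeril1}) and to~(\ref{equ:tempmeril2}); by definition of the spaces $\Exp^{\ttt}_{\varepsilon'}(S)$ and of $\mathscr{H}^{\ttt}_{S}(\V,\varepsilon')$ (together with the identification $\mathscr{H}^{\ttt}_{S}(\V,\varepsilon') \simeq e^{-\langle z, \varepsilon'\xi_0 \rangle}\Omega^{\ttt_{\infty}}_{\PPP}(\V\backslash S)/e^{-\langle z, \varepsilon'\xi_0 \rangle}\Omega^{\ttt}_{\PPP}(\V)$ already recorded just after the definition), this yields the desired isomorphism $\mathscr{H}^{\ttt}_{S}(\V,\varepsilon') \xrightarrow{\sim} \Exp^{\ttt}_{\varepsilon'}(S)$. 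The independence on the chosen norm follows from the remark that equivalent norms give $h_{S_\varepsilon}$'s differing by a term absorbed into the $\varepsilon$-thickening, exactly as in Remark~\ref{rem:polya}.

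Next I would compute the composite in coordinates. Fix a thickening $S_\varepsilon$ and a cutoff $\psi_\varepsilon$ as in Proposition~\ref{prop:tempmeril3}. Given $u(z)dz \in \mathscr{H}^{\ttt}_{S}(\V,\varepsilon')$, its image in $H^1_{S_\varepsilon}(\V, e^{-\langle z, \varepsilon'\xi_0 \rangle}\Omega^{\ttt}_{\PPP})$ is $[\bar\partial(\psi_\varepsilon\, u(z)dz)]$, and then Proposition~\ref{prop:mainlegendre} (with the positive Laplace transform $\LL^+$) sends this to
\[
v(w) = \LL^+_w\bigl(\bar\partial(\psi_\varepsilon u(z)dz)\bigr) = \int_{\V} \bar\partial\bigl(e^{zw}\psi_\varepsilon u(z)dz\bigr),
\]
using $\bar\partial(e^{zw}) = 0$. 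Since $e^{zw}\psi_\varepsilon u(z)dz$ is holomorphic on $\V\backslash S_\varepsilon$, the integrand is supported in a compact neighbourhood of $S_\varepsilon$ intersected with a large region; Green's theorem converts the area integral into the contour integral $\int_{\partial S_\varepsilon^+} e^{zw}\psi_\varepsilon u(z)dz$, and since $\psi_\varepsilon = 1$ on $\partial S_\varepsilon$ this equals $\int_{\partial S_\varepsilon^+} e^{zw} u(z)dz$. One must check that this integral converges: for $w \in S_\infty^* + \varepsilon'\xi_0$ the exponential $e^{zw}$ decays along $\partial S_\varepsilon$ away from a compact part (this is exactly why one works with the shift $\varepsilon'\xi_0$ and the polar cone $S_\infty^*$), while $u$ has at most polynomial growth there since $u(z)dz$ defines a class in $\mathscr{H}^{\ttt}_{S}(\V,\varepsilon')$; rectifiability of $\partial S_\varepsilon$ (a plane convex boundary) then makes the integral finite. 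Finally, a Cauchy/Green argument shows that shrinking $\varepsilon$ does not change $v$, so $v$ is well-defined as the image in $\Exp^{\ttt}_{\varepsilon'}(S) = \varprojlim_{\varepsilon\to 0} \Exp^{\ttt}_{\varepsilon,\varepsilon'}(S)$.

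The main obstacle, I expect, is not the formal machinery but the careful bookkeeping of growth conditions when passing to the limit and when justifying the contour integral: one must verify that the tempered-growth hypotheses baked into $\mathscr{H}^{\ttt}_{S}(\V,\varepsilon')$ and $\Exp^{\ttt}_{\varepsilon,\varepsilon'}(S)$ are precisely matched by the Legendre transform $f_{\varepsilon,\varepsilon'}^*(w) = h_{S_\varepsilon}(w - \varepsilon'\xi_0)$ under Proposition~\ref{prop:mainlegendre}, and that the assumed subanalyticity and almost-$\CC^\infty$-subanalyticity of $h_{S_\varepsilon}$ (which we assume throughout the section) genuinely licenses the application of that proposition for every $\varepsilon > 0$. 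A secondary subtlety is the geometry of the non-compact cone: unlike Polya's case where $\partial K_\varepsilon$ is a compact circle, here $\partial S_\varepsilon$ is unbounded, so one needs the decay of $e^{zw}$ in the directions of $S_\infty$ — guaranteed by $w \in S_\infty^* + \varepsilon'\xi_0$ — to even make sense of the contour integral and of the exchange of $\bar\partial$ with integration over the full plane. Once these estimates are in place, the argument is a direct transcription of the proof of Theorem~\ref{thm:mainpolya}.
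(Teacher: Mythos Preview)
Your overall strategy is exactly the paper's: take $\varprojlim_{\varepsilon\to 0}$ of~(\ref{equ:tempmeril1}) combined with~(\ref{equ:tempmeril2}), then explicit the map via Proposition~\ref{prop:tempmeril3} and $\LL^+$. The one genuine imprecision is in the Green's theorem step. You write that the integrand $\bar\partial(e^{zw}\psi_\varepsilon u(z)dz)$ is ``supported in a compact neighbourhood of $S_\varepsilon$ intersected with a large region''; this is false. Its support is the strip $S_\varepsilon\setminus S^\circ_{\varepsilon/2}$, which is unbounded because $S$ is non-compact, so Green's theorem does not apply directly and the passage from the area integral to $\int_{\partial S_\varepsilon^+}$ is not yet justified.

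The paper closes this gap by truncating: one writes
\[
\int_{\V}\bar\partial(e^{zw}\psi_\varepsilon u\,dz)
=\lim_{R\to\infty}\int_{(S_\varepsilon\setminus S^\circ_{\varepsilon/2})\cap\overline{D}(0,R)}\bar\partial(e^{zw}\psi_\varepsilon u\,dz),
\]
applies Green on the compact region, and decomposes its boundary into four rectifiable pieces: $(\partial S_\varepsilon\cap\overline{D}(0,R))^+$, $(\partial S_{\varepsilon/2}\cap\overline{D}(0,R))^-$, and two circular arcs $\mathcal I_R,\mathcal J_R$. The $\partial S_{\varepsilon/2}$ integral vanishes since $\psi_\varepsilon=0$ there. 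The essential estimate is that the arc integrals tend to $0$: for $z\in\mathcal I_R\subset S_\varepsilon$ and $w\in(S_\infty^*)^\circ+\varepsilon'\xi_0$ one splits $e^{zw}=e^{z(w-\varepsilon'\xi_0)}e^{z\varepsilon'\xi_0}$; the second factor times $u$ is tempered (this is exactly the hypothesis defining $\mathscr H^{\ttt}_S(\V,\varepsilon')$), hence bounded by $cR^N$, while $|e^{z(w-\varepsilon'\xi_0)}|=e^{R|w-\varepsilon'\xi_0|\cos\theta_R}$ with $\cos\theta_R<-\delta<0$ for large $R$ because $w-\varepsilon'\xi_0\in(S_\infty^*)^\circ$ and $z\in S_\varepsilon$. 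Thus the arc contribution is $O(R^{N+1}e^{-\delta|w-\varepsilon'\xi_0|R})\to 0$. You identified this decay as the ``secondary subtlety'', but it is in fact the only non-formal content of the proof, and your sketch does not supply the mechanism (truncation plus the four-piece boundary decomposition) that makes it work.
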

\begin{proof}
We apply $\underset{\varepsilon\to 0}\varprojlim$ to (\ref{equ:tempmeril1}) as well as (\ref{equ:tempmeril2}) to get isomorphisms
\[
\mathscr{H}^{\ttt}_{S}(\V,\varepsilon') \xrightarrow{\sim} \varprojlim_{\varepsilon\to 0}H^1_{S_{\varepsilon}}(\V, e^{-\langle z, \varepsilon'\xi_0 \rangle}\Omega_{\PPP}^{\ttt}) \xrightarrow{\sim} \varprojlim_{\varepsilon\to 0}\text{Exp}^{\ttt}_{\varepsilon,\varepsilon'}(S) = \text{Exp}^{\ttt}_{\varepsilon'}(S).
\]
\noindent
Let us now compute this map within coordinates. Let $[u(z)dz]\in \mathscr{H}^{\ttt}_{S}(\V,\varepsilon')$ and fix $\varepsilon>0.$ Let us choose a cutoff function $\psi_{\varepsilon}$ as in Proposition~\ref{prop:tempmeril3}. Then the image of $[u(z)dz]$ in $\text{Exp}^{\ttt}_{\varepsilon,\varepsilon'}(S)$ is given by $v$, where
\[
v(w) = \LL^+_w(\bar{\partial}(\psi_{\varepsilon}u(z)dz)) = \int_{\V} e^{zw} \bar{\partial} (\psi_{\varepsilon}u(z)dz).
\] One has 
\begin{align*}
\int_{\V} e^{zw} \bar{\partial} (\psi_{\varepsilon}u(z)dz) & = \int_{\V} \bar{\partial} (e^{zw}\psi_{\varepsilon}u(z)dz) = \int_{S_{\varepsilon}\backslash S^{\circ}_{\varepsilon/2}} \bar{\partial} (e^{zw}\psi_{\varepsilon}u(z)dz)\\
& = \lim_{R\to +\infty} \int_{(S_{\varepsilon}\backslash S^{\circ}_{\varepsilon/2})\cap \overline{D}(0,R)} \bar{\partial} (e^{zw}\psi_{\varepsilon}u(z)dz)\\
& = \lim_{R\to +\infty} \int_{\partial((S_{\varepsilon}\backslash S^{\circ}_{\varepsilon/2})\cap \overline{D}(0,R))^+}e^{zw}\psi_{\varepsilon}u(z)dz.
\end{align*}

\noindent
It is clear that $\partial((S_{\varepsilon}\backslash S^{\circ}_{\varepsilon/2})\cap \overline{D}(0,R))^+$ is a Jordan rectifiable curve which can be decomposed in four oriented rectifiable curves : $(\partial S_{\varepsilon} \cap \overline{D}(0,R))^+$, $(\partial S_{\varepsilon/2} \cap \overline{D}(0,R))^-$ and two oriented arcs of circle $\mathcal{I}_R$ and $\mathcal{J}_R$ (see figure \ref{contour} below). By construction of $\psi_{\varepsilon}$, we have
\[
\int_{(\partial S_{\varepsilon/2} \cap \overline{D}(0,R))^-} e^{zw}\psi_{\varepsilon}u(z)dz = 0
\] 
and 
\[
\lim_{R\to +\infty} \int_{(\partial S_{\varepsilon} \cap \overline{D}(0,R))^+} e^{zw}\psi_{\varepsilon}u(z)dz =\int_{\partial S_{\varepsilon}^+} e^{zw} u(z)dz.
\] 
Let us prove that 
\[
\lim_{R\to + \infty} \int_{\mathcal{I}_R} e^{zw}\psi_{\varepsilon}u(z)dz = \lim_{R\to + \infty} \int_{\mathcal{J}_R} e^{zw}\psi_{\varepsilon}u(z)dz =0.\] 
We do it for $\mathcal{I}_R.$ We have
\begin{align*}
\left|\int_{\mathcal{I}_R} e^{zw}\psi_{\varepsilon}(z)u(z) dz \right| &< 2\pi R \sup_{z\in \mathcal{I}_R} |e^{zw}u(z)|\\ &= 2\pi R \sup_{z\in \mathcal{I}_R} |e^{z(w-\varepsilon' \xi_0)}|\sup_{z\in \mathcal{I}_R}|e^{z(\varepsilon'\xi_0)}u(z)|.
\end{align*}

\bigskip

\begin{figure}
\centering
\begin{tikzpicture}[line cap=round,line join=round,>=stealth,x=1.0cm,y=1.0cm, scale = 1]
\tikzstyle{every node}=[scale=1];

\definecolor{uuuuuu}{rgb}{0.266,0.266,0.266}

\draw(0,0)  node[below left] {$0$} node {$\bullet$};

\clip(-4.,-3.5) rectangle (5.,5.);

\draw [samples=50,rotate around={-56.309:(1.44,1.833)},xshift=1.44cm,yshift=1.833cm,domain=-6.623:6.623)] plot (\x,{(\x)^2/2/1.103});
\draw [samples=50,rotate around={-56.309:(1.44,1.833)},xshift=1.44cm,yshift=1.233cm,domain=-6.623:6.623)] plot (\x,{(\x)^2/3/1.103});
\draw(0.,0.) circle (3.cm);

\draw [samples=50,rotate around={-56.309:(1.44,1.833)},xshift=1.44cm,yshift=1.833cm,domain=-0.85:1.25),line width=1.6pt] plot (\x,{(\x)^2/2/1.103});
\draw [samples=50,rotate around={-56.309:(1.44,1.833)},xshift=1.44cm,yshift=1.233cm,domain=-1.34:1.93),line width=1.6pt] plot (\x,{(\x)^2/3/1.103});
\draw [line width=1.6pt] (23.2:3) arc(23.2:10:3);  
\draw [line width=1.6pt] (65.6:3) arc(65.6:77.4:3);

\draw (44.4:2.29) node [rotate=134.4]{$>$};
\draw (44.4:1.69) node [rotate=134.4]{$<$};
\draw (17:3) node [rotate=108.5]{$>$};
\draw (71.5:3) node [rotate=150]{$>$};

\draw (2.7,-2.7) node [scale=1] {$C(0,R)$};
\draw (2.3,4) node [scale=1] {$\partial S_{\varepsilon/2}$};
\draw (0.5,4.03) node [scale=1] {$\partial S_{\varepsilon}$};
\draw (1,3.15) node [scale=0.8] {$\mathcal{I}_R$};
\draw (3.25,0.8) node [scale=0.8] {$\mathcal{J}_R$};
\end{tikzpicture}
\caption{\label{contour} The contour $\partial((S_{\varepsilon}\backslash S^{\circ}_{\varepsilon/2})\cap \overline{D}(0,R))^+$.}
\end{figure}

\bigskip
\noindent
On one hand, thanks to the tempered condition on $e^{z(\varepsilon'\xi_0)}u(z)dz$, one can see that, for $R$ big enough, there are $c\in (0,+\infty)$ and $N\in \N$ such that $\sup_{z\in \mathcal{I}_R}|e^{z(\varepsilon'\xi_0)}u(z)| \leq cR^N.$ On the other hand, for each $R>0$, there is $z_R \in \mathcal{I}_R$ such that $ \sup_{z\in \mathcal{I}_R} |e^{z(w-\varepsilon' \xi_0)}| = e^{\Re (z_R(w-\varepsilon'\xi_0))}.$ Moreover, one can write 
\[
 e^{\Re (z_R(w-\varepsilon'\xi_0))} = e^{|z_R||w-\varepsilon' \xi_0|\cos(\theta_R)} = e^{R|w-\varepsilon' \xi_0|\cos(\theta_R)},
\] 
where $\theta_R$ is the non-oriented angle between $\bar{z}_R$ and $w-\varepsilon' \xi_0.$ Since $w-\varepsilon' \xi_0 \in (S^*_{\infty})^{\circ}=(S^*_{\varepsilon, \infty})^{\circ}$ and $z_R \in S_{\varepsilon}$, we can find $\delta>0$ such that $\cos(\theta_R) < -\delta <0$ for all $R$ big enough. Hence, for $R$ big enough, 
\[
\left|\int_{\mathcal{I}_R} e^{zw}\psi_{\varepsilon}(z)u(z)\, dz \right|  < 2\pi c R^{N+1}e^{-|w-\varepsilon'\xi_0| \delta R} \underset{R\to +\infty}\to 0.
\] 
We have thus proved that the image of $[u(z)dz]$ in $\text{Exp}^{\ttt}_{\varepsilon,\varepsilon'}(S)$ is the function $v$, defined on $(S^*_{\infty})^{\circ}+\varepsilon'\xi_0$ by $v(w)=\int_{\partial S_{\varepsilon}^+} e^{zw} u(z)dz$. One can check, by a similar proof as above, that this integral remains unchanged with $\varepsilon_1<\varepsilon.$ Therefore, it is also the image of $[u(z)dz]$ in  $\text{Exp}^{\ttt}_{\varepsilon'}(S)$ and we get the conclusion.
\end{proof}

\begin{remark}
Let $\varepsilon' > \varepsilon'_1 >0.$ Then there is a well defined map $\mathscr{H}^{\ttt}_{S}(\V,\varepsilon'_1) \to \mathscr{H}^{\ttt}_{S}(\V,\varepsilon'),$ namely $[\omega] \mapsto [\omega].$ Indeed, if $e^{\langle z, \varepsilon'_1\xi_0 \rangle}\omega$ is tempered on $S^{\circ}_r \backslash S_{\varepsilon}$ (resp. on $S^{\circ}_r$), then 
\[
e^{\langle z, \varepsilon'\xi_0 \rangle}\omega = e^{\langle z, (\varepsilon'-\varepsilon'_1)\xi_0 \rangle}e^{\langle z, \varepsilon'_1\xi_0 \rangle}\omega
\] 
is also tempered on $S^{\circ}_r \backslash S_{\varepsilon}$ (resp. on $S^{\circ}_r$), since $\Re(\langle z, (\varepsilon'-\varepsilon'_1)\xi_0 \rangle) < 0$ for all $z\in S_r$ with big enough module. Hence, this gives rise to a projective system $(\mathscr{H}^{\ttt}_{S}(\V,\varepsilon'))_{\varepsilon'}$ which is compatible, through the Laplace transform, with the projective system $(\text{Exp}^{\ttt}_{\varepsilon'}(S))_{\varepsilon'}.$
\end{remark}

\begin{corollary}\label{cor:mainmeril}
There is a canonical isomorphism of $\C$-vector spaces
\begin{equation}\label{equ:mainmeril}
\varprojlim_{\varepsilon' \to 0}\mathscr{H}^{\ttt}_{S}(\V,\varepsilon') \xrightarrow{\sim} \Exp^{\ttt}(S). 
\end{equation}
\medskip
\noindent
Given a global $\C$-linear coordinate $z$ of $\V$ and $w$ its dual coordinate, this isomorphism can be explicited by
\[
\varprojlim_{\varepsilon' \to 0}\mathscr{H}^{\ttt}_{S}(\V,\varepsilon') \ni ([u_{\varepsilon'}(z)dz])_{\varepsilon'} \mapsto v \in \Exp^{\ttt}(S),
\]
with
\[
v(w) = \int_{\partial S_{\varepsilon}^+} e^{zw} u_{\varepsilon'}(z)dz.
\] 
\end{corollary}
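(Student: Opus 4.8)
The plan is simply to apply $\varprojlim_{\varepsilon'\to 0}$ to the statement of Theorem~\ref{thm:maintheorem}. By the remark preceding the corollary, the spaces $\mathscr{H}^{\ttt}_{S}(\V,\varepsilon')$ form a projective system, whose transition morphisms for $\varepsilon'>\varepsilon'_1>0$ are the natural maps $[\omega]\mapsto[\omega]$, and this system is compatible — through the positive Laplace transform $\LL^+$ — with the projective system $(\Exp^{\ttt}_{\varepsilon'}(S))_{\varepsilon'}$. Hence the isomorphisms furnished by Theorem~\ref{thm:maintheorem} assemble into a morphism of projective systems all of whose components are isomorphisms, i.e.\ an isomorphism of projective systems. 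Since $\varprojlim$ is a functor, it carries this to an isomorphism
\[
\varprojlim_{\varepsilon'\to 0}\mathscr{H}^{\ttt}_{S}(\V,\varepsilon') \xrightarrow{\sim} \varprojlim_{\varepsilon'\to 0}\Exp^{\ttt}_{\varepsilon'}(S) = \Exp^{\ttt}(S),
\]
the last equality being the definition of $\Exp^{\ttt}(S)$. No Mittag-Leffler hypothesis is needed, precisely because the morphism is already componentwise an isomorphism.

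For the explicit description, I would take a compatible family $([u_{\varepsilon'}(z)dz])_{\varepsilon'}\in\varprojlim_{\varepsilon'\to 0}\mathscr{H}^{\ttt}_{S}(\V,\varepsilon')$. By Theorem~\ref{thm:maintheorem}, for each $\varepsilon'>0$ its image in $\Exp^{\ttt}_{\varepsilon'}(S)$ is the function $v_{\varepsilon'}$ defined on $(S^*_{\infty})^{\circ}+\varepsilon'\xi_0$ by $v_{\varepsilon'}(w)=\int_{\partial S_{\varepsilon}^+}e^{zw}u_{\varepsilon'}(z)dz$ for any admissible $\varepsilon$. Compatibility of the family forces the $v_{\varepsilon'}$ to be restrictions of a single holomorphic function $v$; one then checks that $\bigcup_{\varepsilon'>0}\big((S^*_{\infty})^{\circ}+\varepsilon'\xi_0\big)=(S^*_{\infty})^{\circ}$ — indeed $(S^*_{\infty})^{\circ}$ is open and $\xi_0$ lies in it, so for any $w\in (S^*_{\infty})^{\circ}$ we have $w-\varepsilon'\xi_0\in (S^*_{\infty})^{\circ}$ for $\varepsilon'$ small enough. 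Thus $v$ is defined on all of $(S^*_{\infty})^{\circ}$, as required by the definition of $\Exp^{\ttt}(S)$, and it is given by the stated contour-integral formula $v(w)=\int_{\partial S_{\varepsilon}^+}e^{zw}u_{\varepsilon'}(z)dz$.

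I do not expect any real obstacle here: all the analytic content — the passage from $\int_{\V}\bar\partial(\cdots)$ to a contour integral via Green's theorem, the vanishing of the two arc integrals $\mathcal{I}_R,\mathcal{J}_R$, and the independence of the integral on the thickening parameter — was already carried out in the proof of Theorem~\ref{thm:maintheorem}. The only steps that genuinely need a line of verification are the tautological identification $\varprojlim_{\varepsilon'}\Exp^{\ttt}_{\varepsilon'}(S)=\Exp^{\ttt}(S)$ and the elementary cone computation $\bigcup_{\varepsilon'>0}((S^*_{\infty})^{\circ}+\varepsilon'\xi_0)=(S^*_{\infty})^{\circ}$ above.
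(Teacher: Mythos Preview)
Your argument is correct and follows the same overall strategy as the paper: obtain the isomorphism by applying $\varprojlim_{\varepsilon'\to 0}$ to Theorem~\ref{thm:maintheorem}, using the compatibility of the two projective systems established in the preceding remark.

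The only noteworthy difference lies in how the explicit formula is justified. You glue the functions $v_{\varepsilon'}$, each a priori defined only on $(S^*_{\infty})^{\circ}+\varepsilon'\xi_0$, via the elementary observation that $\bigcup_{\varepsilon'>0}\big((S^*_{\infty})^{\circ}+\varepsilon'\xi_0\big)=(S^*_{\infty})^{\circ}$; the formula $v(w)=\int_{\partial S_{\varepsilon}^+}e^{zw}u_{\varepsilon'}(z)\,dz$ then holds for every $\varepsilon'$ small enough that $w\in(S^*_{\infty})^{\circ}+\varepsilon'\xi_0$, and compatibility guarantees the value is independent of that choice. The paper instead checks two stronger facts directly: (i) each contour integral $\int_{\partial S_{\varepsilon}^+}e^{zw}u_{\varepsilon'}(z)\,dz$ is already well-defined and holomorphic on the whole of $(S^*_{\infty})^{\circ}$, and (ii) for $\varepsilon'>\varepsilon'_1>0$ the two integrals coincide on all of $(S^*_{\infty})^{\circ}$, because $u_{\varepsilon'}-u_{\varepsilon'_1}$ is entire with a suitable tempered bound, so that
\[
\int_{\partial S_{\varepsilon}^+} e^{zw}\big(u_{\varepsilon'}(z)-u_{\varepsilon'_1}(z)\big)\,dz
= \lim_{R\to+\infty}\int_{\partial(S_{\varepsilon}\cap\overline{D}(0,R))^+} e^{zw}\big(u_{\varepsilon'}(z)-u_{\varepsilon'_1}(z)\big)\,dz = 0.
\]
Your gluing argument is cleaner and entirely sufficient for the statement; the paper's route yields the marginally stronger conclusion that the formula holds for \emph{every} $\varepsilon'>0$, not only for sufficiently small ones.
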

\begin{proof}
Within coordinates, we already know that image of $([u_{\varepsilon'}(z)dz])_{\varepsilon'}$ through (\ref{equ:mainmeril}) is given by a family $(v_{\varepsilon'})_{\varepsilon'},$ where 
\[
v_{\varepsilon'}(w) = \int_{\partial S_{\varepsilon}^+} e^{zw} u_{\varepsilon'}(z)dz
\]
on $(S^*_{\infty})^{\circ}+ \varepsilon' \xi_0.$ To get the conclusion, it is enough to remark that
\begin{enumerate}
\item For all $\varepsilon'$, the function $v_{\varepsilon'}$ is well-defined and holomorphic on $(S^*_{\infty})^{\circ}.$
\item For any $\varepsilon' > \varepsilon'_1 > 0$, one has 
\[
\int_{\partial S_{\varepsilon}^+} e^{zw} u_{\varepsilon'}(z)dz = \int_{\partial S_{\varepsilon}^+} e^{zw} u_{\varepsilon_1'}(z)dz.\]
\end{enumerate}

\noindent
Indeed, since $u_{\varepsilon'}-u_{\varepsilon_1'}$ is entire and verifies a suitable tempered condition, we have

\[
\int_{\partial S_{\varepsilon}^+} e^{zw} (u_{\varepsilon'}(z)-u_{\varepsilon_1'}(z))dz =\!\! \lim_{R\to + \infty} \int_{\partial(S_{\varepsilon} \cap \overline{D}(0,R))^+} e^{zw} (u_{\varepsilon'}(z)-u_{\varepsilon_1'}(z))dz = 0.
\]
\end{proof}

\begin{remark}
Corollary~\ref{cor:mainmeril} is nothing more but Méril's theorem, while Theorem~\ref{thm:maintheorem} is a stronger and new result. Firstly, the canonical map
\[
\mathscr{H}_{S}(\V,\varepsilon') \to \mathscr{H}^{\ttt}_{S}(\V,\varepsilon') 
\]
is injective for all $\varepsilon'$, thanks to the Phragmen-Lindelöf theorem of \cite[p. 394]{Hill12}. Hence, it remains injective when applying $\underset{\varepsilon'\to 0}\varprojlim.$ Secondly, the inclusion 
\begin{align*}
\text{Exp}(S)&\subset \{v \in \Hol_{\V^*}((S_{\infty}^*)^{\circ}) : \forall \varepsilon,\varepsilon'>0, \, v \in e^{h_{S_{\varepsilon}}}\Db^{\ttt}_{\PPP}((S_{\infty}^*)^{\circ}+\varepsilon'\xi_0)\}\\ &\simeq \text{Exp}^{\ttt}(S)
\end{align*}
is an equality for the same reasons as in Remark~\ref{rem:polya}.
\end{remark}


\begin{thebibliography}{19}

\bibitem{Ausl03} \textsc{A. Auslender -- M. Teboulle,} \textit{Asymptotic cones and functions in optimization and variational inequalities,} Springer Monographs in Mathematics, Springer-Verlag, New York, 2003.

\bibitem{Bere95} \textsc{C. A. Berenstein -- R. Gay,} \textit{Complex analysis and special topics in harmonic analysis,} Springer-Verlag, New York, 1995.

\bibitem{Dagn14} \textsc{A. D'Agnolo,} \textit{On the Laplace transform for tempered holomorphic functions,} Int. Math. Res. Not. IMRN~\textbf{2014}, \no 16, pp.~4587–4623. 
 
\bibitem{Dagn16} \textsc{A. D'Agnolo -- M. Kashiwara,} \textit{Riemann-Hilbert correspondence for holonomic $\mathcal{D}$-modules,} Publ. Math. Inst. Hautes Études Sci.~\textbf{123} (2016), pp.~69–197.

\bibitem{Hill12} \textsc{E. Hille,} \textit{Analytic function theory,}, vol. II, Second ed., Chelsea Publishing Company, New York, 1973.

\bibitem{Horm58} \textsc{L. Hörmander,} \textit{On the division of distributions by polynomials,} Ark. Mat.~\textbf{3} (1958), pp.~555-568.  

\bibitem{Kash84} \textsc{M. Kashiwara,} \textit{The Riemann-Hilbert problem for holonomic systems,} Publ. Res. Inst. Math. Sci.~\textbf{20} (1984), pp.~319–365. 

\bibitem{Kash03} \textsc{M. Kashiwara,} \textit{$\mathcal{D}$-modules and microlocal calculus,} Translated from the 2000
Japanese original by M. Saito, Translations of Mathematical Monographs, 217, Iwanami Series in Modern Mathematics, American Mathematical Society, Providence, R.I., 2003.

\bibitem{Kashi16} \textsc{M. Kashiwara,} \textit{Riemann–Hilbert correspondence for irregular holonomic $\mathcal{D}$-modules,} Jpn. J. Math.~\textbf{11} (2016), \no 1, pp.~113-149.

\bibitem{Kash90} \textsc{M. Kashiwara -- P. Schapira,} \textit{Sheaves on manifold,} with a chapter in French by Ch. Houzel, Grundlehren der Mathematischen Wissenschaften, 292. Springer-Verlag, Berlin etc., 1990.

\bibitem{Kash96} \textsc{M. Kashiwara -- P. Schapira,} \textit{Moderate and formal cohomology associated with constructible sheaves,} M\'{e}moires Soc. Math. France (N.S.)~\textbf{64} (1996), \begin{tiny}IV\end{tiny}+76 pp.

\bibitem{Kash97} \textsc{M. Kashiwara -- P. Schapira,} \textit{Integral transforms with exponential kernels and Laplace transform,} J. Amer. Math. Soc.~\textbf{10} (1997), \no 4, pp.~939-972.

\bibitem{Kash01} \textsc{M. Kashiwara -- P. Schapira,} \textit{Ind-Sheaves,} Ast\'{e}risque~\textbf{271} (2001), 136 pp.

\bibitem{Kash16} \textsc{M. Kashiwara -- P. Schapira,} \textit{Irregular holonomic kernels and Laplace transform,} Selecta Math. (N. S.)~\textbf{22} (2016), \no 1, pp.~55-109.

\bibitem{Loja69} \textsc{S. Łojasiewicz,} \textit{Sur le problème de la division,} Studia Math.~\textbf{8} (1959), pp.~87–136.

\bibitem{Meril83} \textsc{A. M\'{e}ril,} \textit{Fonctionnelles analytiques à porteur non born\'{e},} Tokyo J. Math~\textbf{4} (1981), \no 2, pp.~457-492.

\bibitem{Prel08} \textsc{L. Prelli,} \textit{Sheaves on subanalytic sites,} Rend. Semin. Mat. Univ. Padova~\textbf{120} (2008), pp.~167-216.

\bibitem{Rham84} \textsc{G. De Rham,} \textit{Differentiable manifolds, Forms, currents, harmonic forms,} translated from the French by F. R. Smith, with an introduction by S. S. Chern, Grundlehren der Mathematischen Wissenschaften, 266, Springer-Verlag, Berlin etc., 1984.

\bibitem{Roev78} \textsc{J. W. De Roever,} \textit{Complex Fourier transformation and analytic functional with unbounded carriers,} with a preface by E. M. de Jager, Mathematical Centre Tracts, 89, Mathematisch Centrum, Amsterdam, 1978.

\end{thebibliography}
\end{document}